\definecolor{nordred}{HTML}{bf616a}
\definecolor{bordeaux}{HTML}{821529}
\definecolor{bluelink}{HTML}{003399}
\definecolor{nordred}{HTML}{bf616a}
\definecolor{nordblue}{HTML}{81a1c1}
\definecolor{norddarkblue}{HTML}{5e81ac}
\definecolor{nordgreen}{HTML}{a3be8c}
\definecolor{nordnight}{HTML}{4c566a}
\definecolor{med0}{HTML}{1C1B1B} %
\definecolor{med1}{HTML}{261D1D} %
\definecolor{med2}{HTML}{362B2B} %
\definecolor{med3}{HTML}{5E5757} %
\definecolor{med4}{HTML}{FFE983} %
\definecolor{med5}{HTML}{FFF4C2} %
\definecolor{med6}{HTML}{FDF6E3} %
\definecolor{med7}{HTML}{DB7842} %
\definecolor{med8}{HTML}{B32E39} %
\definecolor{med9}{HTML}{821529} %
\definecolor{medA}{HTML}{602E51} %
\definecolor{medB}{HTML}{FFC929} %
\definecolor{medC}{HTML}{60C37E} %
\definecolor{medD}{HTML}{89D7D0} %
\definecolor{medE}{HTML}{3E75DA} %
\definecolor{medF}{HTML}{D0ADE1} %
\colorlet{medBlack}{med0}
\colorlet{medWhite}{med6}
\colorlet{medRed}{med8}
\colorlet{medBlue}{medE}
\definecolor{nordred}{HTML}{bf616a}
\definecolor{bordeaux}{HTML}{4b1121}
\definecolor{darkyellow}{HTML}{FFC20A}
\definecolor{nicered}{HTML}{9C0D38}
\definecolor{niceblue}{HTML}{0C7BDC}
\colorlet{localblack}{black}
\colorlet{localwhite}{white}
\colorlet{localcolor}{medB}
\colorlet{localgray}{med3}
\colorlet{localblue}{medE}
\colorlet{localred}{med8}
\colorlet{addcolor}{white}
\colorlet{copycolor}{black}
\crefname{defi}{Definition}{Definitions}
\crefname{thm}{Theorem}{Theorems}
\crefname{lem}{Lemma}{Lemmas}
\crefname{prop}{Proposition}{Propositions}
\crefname{cor}{Corollary}{Corollaries}
\crefname{exa}{Example}{Examples}
\crefname{rem}{Remark}{Remarks}
\newcommand{\hint}[2]{\smash{\overset{(#1)}{#2}}} %
\newcommand\m[1]{\({#1}\)}
\newcommand\mm[1]{\[{#1}\]} %
\newcommand\scalemath[3]{\scalebox{#1}[#2]{\mbox{\ensuremath{\displaystyle #3}}}}
\newcommand{\leftarrowtip}{\ensuremath{\tikz\draw[line width=0.5pt,->] (10pt,0) -- (0,0);}}
\newcommand{\leftarrowtailnotip}{\ensuremath{\tikz\draw[line width=0.5pt,-<] (0,0) -- (10pt,0);}}
\newcommand{\unicodeStar}{\ensuremath{\star}}
\newcommand{\unicodecolon}{\ensuremath{\colon}}
\newcommand{\unicodeleftpar}{\ensuremath{\left(}}
\newcommand{\unicoderightpar}{\ensuremath{\right)}}
\newcommand{\unicoderightcircle}{\ensuremath{\RIGHTcircle}}
\newcommand{\unicodeleftcircle}{\ensuremath{\LEFTcircle}}
\newcommand{\unicodebbA}{\ensuremath{\mathbb{A}}}
\newcommand{\unicodebbB}{\ensuremath{\mathbb{B}}}
\newcommand{\unicodebbC}{\ensuremath{\mathbb{C}}}
\newcommand\UnicodeBlackboardP{\ensuremath{\mathbf{P}}} \DeclareUnicodeCharacter{2119}{\UnicodeBlackboardP}
\newcommand{\unicodecalS}{\ensuremath{\mathcal{S}}}
\newcommand{\unicodecalT}{\ensuremath{\mathcal{T}}}
\newcommand{\unicodecalC}{\ensuremath{\mathcal{C}}}
\newcommand{\unicodecalD}{\ensuremath{\mathcal{D}}}
\newcommand{\unicodecalX}{\ensuremath{\mathcal{X}}}
\newcommand{\unicodecalN}{\ensuremath{\mathcal{N}}}
\newcommand{\unicodecalE}{\ensuremath{\mathcal{E}}}
\newcommand{\hirayo}{\scaleobj{0.9}{\text{\usefont{U}{min}{m}{n}\symbol{'210}}}}
\DeclareFontFamily{U}{min}{}
\DeclareFontShape{U}{min}{m}{n}{<-> udmj30}{}
\newcommand\UnicodeWhiteRightPointingSmallTriangle{\triangleright}
\newcommand\UnicodeWhiteDownPointingSmallTriangle{\triangledown}
\newcommand\UnicodeWhiteUpPointingSmallTriangle{\scalemath{1}{-1}{{}^{\triangledown}}}
\newcommand\smallmath[2]{#1{\raisebox{\dimexpr \fontdimen 22 \textfont 2
      - \fontdimen 22 \scriptscriptfont 2 \relax}{$\scriptscriptstyle #2$}}}
\newcommand\smalloplus{\smallmath\mathbin\oplus}
\newcommand\mydots{\makebox[0.6em][c]{.\hfil.\hfil.}}
\newcommand{\unicodeRelationalComposition}{\mathbin{\fatsemi}}
\newcommand{\sneakylink}[2]{\protect\hyperlink{#1}{#2}}
\newcommand{\nicelinktarget}[1]{\Hy@raisedlink{\hypertarget{#1}{}}}
\newcommand\linkdef[2]{\nicelinktarget{#1}{\color{black}{#2}}}
\newcommand\defining[1]{\nicelinktarget{#1}{}}
\newcommand{\exactObservations}{\kl{exact observations}}
\newcommand{\ExactObservations}{\kl{Exact observations}}
\newcommand{\obsv}[1]{{#1}^{\kl[exact observation]{\circ}}}
\newcommand{\pur}[1]{{#1}^{\kl[exact observation]{•}}}
\newcommand{\conditional}{\kl{conditional}}
\newcommand{\conditionals}{\kl{conditionals}}
\newcommand{\Conditional}{\kl{Conditional}}
\newcommand{\Conditionals}{\kl{Conditionals}}
\newcommand{\copyDiscardCategory}{\kl{copy-discard category}}
\newcommand{\copyDiscardCategories}{\kl{copy-discard categories}}
\newcommand{\CopyDiscardCategories}{\kl{Copy-discard categories}}
\newcommand{\nePowerset}{\mathcal{P}_{ne}}
\newcommand{\partialMarkovCategory}{\hyperlink{linkPartialMarkov}{partial Markov category}}
\newcommand{\partialMarkovCategories}{\hyperlink{linkPartialMarkov}{partial Markov categories}}
\newcommand{\PartialMarkovCategory}{\hyperlink{linkPartialMarkov}{Partial Markov category}}
\newcommand{\PartialMarkovCategories}{\hyperlink{linkPartialMarkov}{Partial Markov categories}}
\newcommand{\domain}{\kl[domain of definition]{domain}}
\newcommand{\bayesianInversion}{\hyperlink{linkbayesinv}{Bayesian inversion}}
\newcommand{\BayesianInversion}{\hyperlink{linkbayesinv}{Bayesian inversion}}
\newcommand{\BayesianInversions}{\hyperlink{linkbayesinv}{Bayesian inversions}}
\newcommand{\Stoch}{\hyperlink{linkStoch}{\ensuremath{\mathsf{Stoch}}}}
\newcommand{\SubStoch}{\hyperlink{linkSubStoch}{\ensuremath{\mathsf{SubStoch}}}}
\newcommand{\subStoch}{\hyperlink{linkSubStoch}{\ensuremath{\mathsf{subStoch}}}}
\newcommand{\BorelStoch}{\hyperlink{linkborelstoch}{\ensuremath{\cat{BorelStoch}}}} %
\newcommand{\subBorelStoch}{\hyperlink{linksubborelstoch}{\begin{NoHyper}{\BorelStoch}\end{NoHyper}_{\leq 1}}}
\newcommand{\Giry}{\hyperlink{linkgiry}{\ensuremath{\fun{G}}}} %
\newcommand{\subGiry}{\hyperlink{linksubgiry}{\begin{NoHyper}{\Giry}\end{NoHyper}_{\leq 1}}} %
\newcommand{\BMeas}{\hyperlink{linkBmeas}{\cat{Borel}}}
\newcommand{\Gauss}{\cat{Gauss}} %
\newcommand{\ket}[1]{|#1\rangle} %
\newcommand{\MarkovCategory}{\kl{Markov category}}
\newcommand{\MarkovCategories}{\kl{Markov categories}}
\newcommand{\discretePartialMarkovCategory}{\hyperlink{linkDiscretePartialMarkov}{discrete partial Markov category}}
\newcommand{\discretePartialMarkovCategories}{\hyperlink{linkDiscretePartialMarkov}{discrete partial Markov categories}}
\newcommand{\DiscretePartialMarkovCategory}{\hyperlink{linkDiscretePartialMarkov}{Discrete partial Markov category}}
\newcommand{\DiscretePartialMarkovCategories}{\hyperlink{linkDiscretePartialMarkov}{Discrete partial Markov categories}}
\newcommand{\comparators}{\hyperlink{linkcomparators}{comparators}}
\newcommand{\normalisation}{\hyperlink{linkNormalisation}{normalisation}}
\newcommand{\normalisations}{\hyperlink{linkNormalisation}{normalisations}}
\newcommand{\Normalisation}{\hyperlink{linkNormalisation}{Normalisation}}
\newcommand{\bayesinv}[2]{\smash{{#1}_{\sneakylink{linkbayesinv}{\dagger}}({#2})}} %
\newcommand{\Pearls}{\hyperlink{linkPearls}{Pearl's}}
\newcommand{\PearlsUpdate}{\hyperlink{linkPearls}{Pearl's update}}
\newcommand{\Jeffreys}{\hyperlink{linkJeffreys}{Jeffrey's}}
\newcommand{\JeffreysUpdate}{\hyperlink{linkJeffreys}{Jeffrey's update}}
\newcommand{\almostSurely}{\hyperlink{linkAlmostSureEquality}{almost-surely}}
\newcommand{\almostSurelyEqual}{\hyperlink{linkAlmostSureEquality}{almost-surely equal}}
\newcommand{\almostSureEquality}{\hyperlink{linkAlmostSureEquality}{almost-sure equality}}
\newcommand{\total}{\hyperlink{linkTotalMorphism}{total}}
\newcommand{\deterministic}{\hyperlink{linkDeterministicMorphism}{deterministic}}
\newcommand{\Deterministic}{\hyperlink{linkDeterministicMorphism}{Deterministic}}
\newcommand{\subdistributions}{\hyperlink{linkSubdistribution}{subdistributions}}
\newcommand{\subprobabilityMeasure}{\hyperlink{linkSubprobabilityMeasure}{subprobability measure}}
\newcommand{\deterministicdomain}{\hyperlink{linkDomainOfDefinition}{deterministic-domain}}
\newcommand{\marginal}{\hyperlink{linkMarginal}{marginal}}
\newcommand{\marginals}{\hyperlink{linkMarginal}{marginals}}
\newcommand{\MarginalComposition}{\hyperlink{linkMarginalComposition}{Conditional composition}}
\newcommand{\conditionalComposition}{\hyperlink{linkMarginalComposition}{conditional composition}}
\newcommand{\BayesTheorem}{\hyperlink{linkBayesTheorem}{Bayes' theorem}}
\newcommand{\exact}{\ensuremath{\mathsf{exact}}}
\newcommand{\distributiveMarkovCategory}{\hyperlink{linkDistributiveMarkovCategory}{distributive Markov category}}
\newcommand{\distributiveMarkovCategories}{\hyperlink{linkDistributiveMarkovCategory}{distributive Markov categories}}
\newcommand{\cond}{\sneakylink{linkcond}{\operatorname{\pmb{c}}}}
\newcommand{\norm}{\operatorname{norm}}
\newcommand{\unif}{\operatorname{unif}_{[0,1]}}
\newcommand{\pdf}[1]{#1^{\mathit{pdf}}}
\title{Partial Markov Categories}
\author[E. Di Lavore]{Elena Di Lavore\lmcsorcid{0000-0002-7783-5079}}[a,b,c]
\author[M. Román]{Mario Román\lmcsorcid{0000-0003-3158-1226}}[b,c]
\author[P. Sobociński]{Paweł Sobociński\lmcsorcid{0000-0003-3158-1226}}[c]
\address{Department of Computer Science, University of Pisa, Italy}
\address{Department of Computer Science, University of Oxford, United Kingdom}
\address{Department of Software Science, Tallinn University of Technology, Estonia}
\keywords{partial Markov category, Markov category, Bayesian inference}
\thanks{Elena Di Lavore, Mario Román, and Paweł Sobociński were supported by the
Safeguarded AI programme of the Advanced Research and Invention Agency and the Estonian Research Council grant PRG3215. Paweł Sobociński was additionally supported by the Estonian Research Council grant TEM-TA5 and the Estonian Center of Excellence in Artificial Intelligence (EXAI), as well as by the European Union under Grant Agreement No. 101087529. 
This article is based on work from
COST Action EuroProofNet, CA20111 supported by COST (European Cooperation in
Science and Technology).}
\begin{document}
\begin{abstract}
  We introduce partial Markov categories as a synthetic framework for
  probabilistic inference, blending the work of Cho and Jacobs, Fritz, and
  Golubtsov on Markov categories with the work of Cockett and Lack on cartesian
  restriction categories. We describe exact observations, Bayes' theorem,
  normalisation, and both Pearl's and Jeffrey's updates in abstract
  categorical terms.
\end{abstract}
\maketitle

\setcounter{tocdepth}{1}
\tableofcontents

\section{Introduction}%
\label{sec:introduction}

Probabilistic syntax is often ambiguous: all probabilistic channels get written as ``P'', and their context and types are left implicit.
This ambiguity gives rise to apparent paradoxes: problems may have different solutions depending on which assumptions are made about the implicit information. Ambiguity in probabilistic reasoning becomes especially challenging when the problem involves %
updating a prior distribution on partial evidence.
A good syntax for probabilistic problems should \emph{(i)} allow probabilistic semantics naturally, \emph{(ii)} express constraints and updates on new evidence, and \emph{(iii)} formally represent the context and all the implicit information for each calculation.

We introduce \partialMarkovCategories{}: an algebra of probabilistic processes with updating and constraints.
We propose \partialMarkovCategories{} as a syntax for probabilistic decision problems: it expresses the types and the context of each process; and it forces the underlying assumptions—both about probability and partiality—to be made explicit.
\PartialMarkovCategories{} blend two string diagrammatic algebras: \MarkovCategories{} (a monoidal algebra of probabilistic processes) and cartesian restriction categories (a monoidal algebra of partiality).
\medskip

String diagrams~\cite{benabou67} offer an intuitive, rigorous, and often succinct syntax that illustrates the causal and temporal dependencies within a process.
For instance, the equations in \Cref{fig:copy-discard-markov}, which we read from top to bottom, state that a generic morphism of a \copyDiscardCategory{}, $f ፡ X → Y$, can be copied (left) and discarded (right).
The leftmost equation fails for stochastic processes (e.g.~throwing a coin twice is not the same as throwing it once and copying the result) but holds
for partial functions. Conversely, the rightmost equation fails for partial functions (e.g.~partiality is not discardable) but holds for probabilistic processes. Thus, we need different axioms for probabilistic and partial processes: these are given by \MarkovCategories{} and cartesian restriction categories, respectively.
\begin{figure}[h!]

\tikzset{every picture/.style={line width=0.75pt}} %

\begin{tikzpicture}[x=0.75pt,y=0.75pt,yscale=-1,xscale=1]
\draw   (140,30.5) -- (160,30.5) -- (160,45.5) -- (140,45.5) -- cycle ;
\draw    (150,30) -- (150,20) ;
\draw    (150,55) -- (150,45) ;
\draw    (150,55) .. controls (140.17,59.43) and (135,59.47) .. (135,70) ;
\draw  [fill={rgb, 255:red, 0; green, 0; blue, 0 }  ,fill opacity=1 ] (147,55) .. controls (147,53.34) and (148.34,52) .. (150,52) .. controls (151.66,52) and (153,53.34) .. (153,55) .. controls (153,56.66) and (151.66,58) .. (150,58) .. controls (148.34,58) and (147,56.66) .. (147,55) -- cycle ;
\draw    (150,55) .. controls (159.88,59.47) and (165,59.72) .. (165,70) ;
\draw   (195,45) -- (215,45) -- (215,60) -- (195,60) -- cycle ;
\draw   (225,45) -- (245,45) -- (245,60) -- (225,60) -- cycle ;
\draw    (220,30) -- (220,20) ;
\draw    (220,30) .. controls (210.17,34.43) and (205,34.47) .. (205,45) ;
\draw  [fill={rgb, 255:red, 0; green, 0; blue, 0 }  ,fill opacity=1 ] (217,30) .. controls (217,28.34) and (218.34,27) .. (220,27) .. controls (221.66,27) and (223,28.34) .. (223,30) .. controls (223,31.66) and (221.66,33) .. (220,33) .. controls (218.34,33) and (217,31.66) .. (217,30) -- cycle ;
\draw    (220,30) .. controls (229.88,34.47) and (235,34.72) .. (235,45) ;
\draw    (205,70) -- (205,60) ;
\draw    (235,70) -- (235,60) ;
\draw  [draw opacity=0] (165,30) -- (195,30) -- (195,55) -- (165,55) -- cycle ;
\draw  [draw opacity=0] (245,30) -- (275,30) -- (275,55) -- (245,55) -- cycle ;
\draw   (330,30.5) -- (350,30.5) -- (350,45.5) -- (330,45.5) -- cycle ;
\draw    (340,30) -- (340,20) ;
\draw    (340,55) -- (340,45) ;
\draw  [fill={rgb, 255:red, 0; green, 0; blue, 0 }  ,fill opacity=1 ] (337,55) .. controls (337,53.34) and (338.34,52) .. (340,52) .. controls (341.66,52) and (343,53.34) .. (343,55) .. controls (343,56.66) and (341.66,58) .. (340,58) .. controls (338.34,58) and (337,56.66) .. (337,55) -- cycle ;
\draw    (395,40) -- (395,20) ;
\draw  [fill={rgb, 255:red, 0; green, 0; blue, 0 }  ,fill opacity=1 ] (392,40) .. controls (392,38.34) and (393.34,37) .. (395,37) .. controls (396.66,37) and (398,38.34) .. (398,40) .. controls (398,41.66) and (396.66,43) .. (395,43) .. controls (393.34,43) and (392,41.66) .. (392,40) -- cycle ;
\draw  [draw opacity=0] (355,30) -- (385,30) -- (385,55) -- (355,55) -- cycle ;
\draw  [draw opacity=0] (405,30) -- (435,30) -- (435,55) -- (405,55) -- cycle ;

\draw (180,42.5) node    {$=$};
\draw (150,38) node  [font=\footnotesize]  {$f$};
\draw (260,42.5) node    {$;$};
\draw (370,42.5) node    {$=$};
\draw (420,42.5) node    {$;$};
\draw (205,52.5) node  [font=\footnotesize]  {$f$};
\draw (235,52.5) node  [font=\footnotesize]  {$f$};
\draw (340,38) node  [font=\footnotesize]  {$f$};
\draw (150,16.6) node [anchor=south] [inner sep=0.75pt]  [font=\tiny]  {$X$};
\draw (220,16.6) node [anchor=south] [inner sep=0.75pt]  [font=\tiny]  {$X$};
\draw (340,16.6) node [anchor=south] [inner sep=0.75pt]  [font=\tiny]  {$X$};
\draw (395,16.6) node [anchor=south] [inner sep=0.75pt]  [font=\tiny]  {$X$};
\draw (135,73.4) node [anchor=north] [inner sep=0.75pt]  [font=\tiny]  {$Y$};
\draw (165,73.4) node [anchor=north] [inner sep=0.75pt]  [font=\tiny]  {$Y$};
\draw (205,73.4) node [anchor=north] [inner sep=0.75pt]  [font=\tiny]  {$Y$};
\draw (235,73.4) node [anchor=north] [inner sep=0.75pt]  [font=\tiny]  {$Y$};

\end{tikzpicture}
     \caption{Copying and discarding equations.}%
    \label{fig:copy-discard-markov}
\end{figure}

\MarkovCategories{}~\cite{fritz_2020} are an algebra of probabilistic processes that adequately expresses \emph{conditioning}, \emph{independence}, and \emph{Bayesian networks}~\cite{fong_2013,cho_2019,fritz_2020,jacobs2021causal,jacobs2020logical}.
However, \MarkovCategories{} do not allow the encoding of constraints, i.e., we cannot impose \emph{a posteriori} that a sampling from a channel must coincide with some observation.
This limitation is structural.
\MarkovCategories{} axiomatically impose that every morphism commutes with discarding (rightmost equation in \Cref{fig:copy-discard-markov}); for distributions, this is equivalent to considering only (proper) probability distributions, i.e.\ those where the individual probabilities sum up to $1$.
However, when updating, this becomes a limitation: updates yield not proper distributions, but only subdistributions.

Cartesian restriction categories~\cite{cockett2007restriction, curien_obtulowicz_1989} are an algebra of processes that adequately expresses partiality and \emph{constraints}.
All morphisms in a cartesian restriction category commute with copying (leftmost equation in \Cref{fig:copy-discard-markov}).
This equation rules out nondeterministic behaviour: the stochastic kernels that commute with copying are the ones that encode functions.

Moreover, cartesian restriction categories may have additional structure to compare resources and check if they are equal.
These are known as \emph{discrete cartesian restriction categories}~\cite{cockett2012range2, di_liberti_nester_2021};
every object has an \emph{equality constraint} morphism that interacts with copying and discarding as in \Cref{fig:comparator-axioms}.
Some \partialMarkovCategories{} have an analogous structure: an equality constraint morphism, or \emph{compararator}, on every object that behaves like in \Cref{fig:comparator-axioms};
we call these \discretePartialMarkovCategories{}.
As we shall see in the next example, comparators are useful for expressing constraints coming from observations.
\begin{figure}[h!]

\tikzset{every picture/.style={line width=0.75pt}} %

\begin{tikzpicture}[x=0.75pt,y=0.75pt,yscale=-1,xscale=1]
\draw    (70,85) -- (70,100) ;
\draw    (85,115) -- (85,125) ;
\draw    (85,115) .. controls (75.17,110.57) and (70,110.53) .. (70,100) ;
\draw  [fill={rgb, 255:red, 0; green, 0; blue, 0 }  ,fill opacity=1 ] (82,115) .. controls (82,116.66) and (83.34,118) .. (85,118) .. controls (86.66,118) and (88,116.66) .. (88,115) .. controls (88,113.34) and (86.66,112) .. (85,112) .. controls (83.34,112) and (82,113.34) .. (82,115) -- cycle ;
\draw    (85,115) .. controls (94.88,110.53) and (100,110.28) .. (100,100) ;
\draw    (100,100) .. controls (90.17,95.57) and (85,95.53) .. (85,85) ;
\draw  [fill={rgb, 255:red, 0; green, 0; blue, 0 }  ,fill opacity=1 ] (97,100) .. controls (97,101.66) and (98.34,103) .. (100,103) .. controls (101.66,103) and (103,101.66) .. (103,100) .. controls (103,98.34) and (101.66,97) .. (100,97) .. controls (98.34,97) and (97,98.34) .. (97,100) -- cycle ;
\draw    (100,100) .. controls (109.88,95.53) and (115,95.28) .. (115,85) ;
\draw    (175,115) -- (175,125) ;
\draw    (175,115) .. controls (165.17,110.57) and (160,110.53) .. (160,100) ;
\draw  [fill={rgb, 255:red, 0; green, 0; blue, 0 }  ,fill opacity=1 ] (172,115) .. controls (172,116.66) and (173.34,118) .. (175,118) .. controls (176.66,118) and (178,116.66) .. (178,115) .. controls (178,113.34) and (176.66,112) .. (175,112) .. controls (173.34,112) and (172,113.34) .. (172,115) -- cycle ;
\draw    (175,115) .. controls (184.88,110.53) and (190,110.28) .. (190,100) ;
\draw    (190,85) -- (190,100) ;
\draw    (160,100) .. controls (150.17,95.57) and (145,95.53) .. (145,85) ;
\draw  [fill={rgb, 255:red, 0; green, 0; blue, 0 }  ,fill opacity=1 ] (157,100) .. controls (157,101.66) and (158.34,103) .. (160,103) .. controls (161.66,103) and (163,101.66) .. (163,100) .. controls (163,98.34) and (161.66,97) .. (160,97) .. controls (158.34,97) and (157,98.34) .. (157,100) -- cycle ;
\draw    (160,100) .. controls (169.88,95.53) and (175,95.28) .. (175,85) ;
\draw    (235,115) -- (235,125) ;
\draw    (235,115) .. controls (225.17,110.57) and (220.29,110.69) .. (220,105) ;
\draw  [fill={rgb, 255:red, 0; green, 0; blue, 0 }  ,fill opacity=1 ] (232,115) .. controls (232,116.66) and (233.34,118) .. (235,118) .. controls (236.66,118) and (238,116.66) .. (238,115) .. controls (238,113.34) and (236.66,112) .. (235,112) .. controls (233.34,112) and (232,113.34) .. (232,115) -- cycle ;
\draw    (235,115) .. controls (244.88,110.53) and (249.86,110.4) .. (250,105) ;
\draw    (220,105) .. controls (220.5,94.57) and (249.83,95.23) .. (250,85) ;
\draw    (250,105) .. controls (250.5,94.57) and (219.83,95.23) .. (220,85) ;
\draw  [draw opacity=0] (250.01,115) -- (280.01,115) -- (280.01,90) -- (250.01,90) -- cycle ;
\draw    (295.01,110) -- (295.01,125) ;
\draw    (295.01,110) .. controls (285.17,105.57) and (280.01,105.53) .. (280.01,95) ;
\draw  [fill={rgb, 255:red, 0; green, 0; blue, 0 }  ,fill opacity=1 ] (292.01,110) .. controls (292.01,111.66) and (293.35,113) .. (295.01,113) .. controls (296.66,113) and (298.01,111.66) .. (298.01,110) .. controls (298.01,108.34) and (296.66,107) .. (295.01,107) .. controls (293.35,107) and (292.01,108.34) .. (292.01,110) -- cycle ;
\draw    (295.01,110) .. controls (304.88,105.53) and (310.01,105.28) .. (310.01,95) ;
\draw    (280.01,85) -- (280.01,95) ;
\draw    (310.01,85) -- (310.01,95) ;
\draw    (580,95) -- (580,85) ;
\draw    (580,95) .. controls (570.17,99.43) and (570,99.47) .. (570,110) ;
\draw  [fill={rgb, 255:red, 0; green, 0; blue, 0 }  ,fill opacity=1 ] (577,95) .. controls (577,93.34) and (578.34,92) .. (580,92) .. controls (581.66,92) and (583,93.34) .. (583,95) .. controls (583,96.66) and (581.66,98) .. (580,98) .. controls (578.34,98) and (577,96.66) .. (577,95) -- cycle ;
\draw    (580,95) .. controls (589.88,99.47) and (590,94.72) .. (590,105) ;
\draw    (600,115) .. controls (590.17,110.57) and (590,115.53) .. (590,105) ;
\draw  [fill={rgb, 255:red, 0; green, 0; blue, 0 }  ,fill opacity=1 ] (597,115) .. controls (597,116.66) and (598.34,118) .. (600,118) .. controls (601.66,118) and (603,116.66) .. (603,115) .. controls (603,113.34) and (601.66,112) .. (600,112) .. controls (598.34,112) and (597,113.34) .. (597,115) -- cycle ;
\draw    (600,115) .. controls (609.88,110.53) and (610,110.28) .. (610,100) ;
\draw    (610,100) -- (610,85) ;
\draw    (570,125) -- (570,110) ;
\draw    (600,125) -- (600,115) ;
\draw    (470,95) -- (470,85) ;
\draw    (470,95) .. controls (479.83,99.43) and (480,99.47) .. (480,110) ;
\draw  [fill={rgb, 255:red, 0; green, 0; blue, 0 }  ,fill opacity=1 ] (473,95) .. controls (473,93.34) and (471.66,92) .. (470,92) .. controls (468.34,92) and (467,93.34) .. (467,95) .. controls (467,96.66) and (468.34,98) .. (470,98) .. controls (471.66,98) and (473,96.66) .. (473,95) -- cycle ;
\draw    (470,95) .. controls (460.13,99.47) and (460,94.72) .. (460,105) ;
\draw    (450,115) .. controls (459.83,110.57) and (460,115.53) .. (460,105) ;
\draw  [fill={rgb, 255:red, 0; green, 0; blue, 0 }  ,fill opacity=1 ] (453,115) .. controls (453,116.66) and (451.66,118) .. (450,118) .. controls (448.34,118) and (447,116.66) .. (447,115) .. controls (447,113.34) and (448.34,112) .. (450,112) .. controls (451.66,112) and (453,113.34) .. (453,115) -- cycle ;
\draw    (450,115) .. controls (440.13,110.53) and (440,110.28) .. (440,100) ;
\draw    (440,100) -- (440,85) ;
\draw    (480,125) -- (480,110) ;
\draw    (450,125) -- (450,115) ;
\draw    (525,100) .. controls (515.17,95.57) and (510,95.53) .. (510,85) ;
\draw  [fill={rgb, 255:red, 0; green, 0; blue, 0 }  ,fill opacity=1 ] (522,100) .. controls (522,101.66) and (523.34,103) .. (525,103) .. controls (526.66,103) and (528,101.66) .. (528,100) .. controls (528,98.34) and (526.66,97) .. (525,97) .. controls (523.34,97) and (522,98.34) .. (522,100) -- cycle ;
\draw    (525,100) .. controls (534.88,95.53) and (540,95.28) .. (540,85) ;
\draw    (525,110) .. controls (515.17,114.43) and (510,114.47) .. (510,125) ;
\draw  [fill={rgb, 255:red, 0; green, 0; blue, 0 }  ,fill opacity=1 ] (522,110) .. controls (522,108.34) and (523.34,107) .. (525,107) .. controls (526.66,107) and (528,108.34) .. (528,110) .. controls (528,111.66) and (526.66,113) .. (525,113) .. controls (523.34,113) and (522,111.66) .. (522,110) -- cycle ;
\draw    (525,110) .. controls (534.88,114.47) and (540,114.72) .. (540,125) ;
\draw    (525,110) -- (525,100) ;
\draw  [draw opacity=0] (480,95) -- (510,95) -- (510,115) -- (480,115) -- cycle ;
\draw  [draw opacity=0] (540,95) -- (570,95) -- (570,115) -- (540,115) -- cycle ;
\draw  [draw opacity=0] (115,115) -- (145,115) -- (145,90) -- (115,90) -- cycle ;
\draw    (355,115) .. controls (345.17,110.57) and (340,115.53) .. (340,105) ;
\draw  [fill={rgb, 255:red, 0; green, 0; blue, 0 }  ,fill opacity=1 ] (352,115) .. controls (352,116.66) and (353.34,118) .. (355,118) .. controls (356.66,118) and (358,116.66) .. (358,115) .. controls (358,113.34) and (356.66,112) .. (355,112) .. controls (353.34,112) and (352,113.34) .. (352,115) -- cycle ;
\draw    (355,115) .. controls (364.88,110.53) and (370,115.28) .. (370,105) ;
\draw    (355,95) .. controls (345.17,99.43) and (340,94.47) .. (340,105) ;
\draw  [fill={rgb, 255:red, 0; green, 0; blue, 0 }  ,fill opacity=1 ] (352,95) .. controls (352,93.34) and (353.34,92) .. (355,92) .. controls (356.66,92) and (358,93.34) .. (358,95) .. controls (358,96.66) and (356.66,98) .. (355,98) .. controls (353.34,98) and (352,96.66) .. (352,95) -- cycle ;
\draw    (355,95) .. controls (364.88,99.47) and (370,94.72) .. (370,105) ;
\draw    (355,125) -- (355,115) ;
\draw    (355,85) -- (355,95) ;
\draw  [draw opacity=0] (370,115) -- (400,115) -- (400,90) -- (370,90) -- cycle ;
\draw    (405,85) -- (405,125) ;
\draw  [draw opacity=0] (310,120) -- (340,120) -- (340,90) -- (310,90) -- cycle ;
\draw  [draw opacity=0] (410,120) -- (440,120) -- (440,90) -- (410,90) -- cycle ;

\draw (440,81.6) node [anchor=south] [inner sep=0.75pt]  [font=\tiny]  {$X$};
\draw (470,81.6) node [anchor=south] [inner sep=0.75pt]  [font=\tiny]  {$X$};
\draw (510,81.6) node [anchor=south] [inner sep=0.75pt]  [font=\tiny]  {$X$};
\draw (265.01,102.5) node    {$=$};
\draw (450,128.4) node [anchor=north] [inner sep=0.75pt]  [font=\tiny]  {$X$};
\draw (480,128.4) node [anchor=north] [inner sep=0.75pt]  [font=\tiny]  {$X$};
\draw (495,105) node    {$=$};
\draw (555,105) node    {$=$};
\draw (540,81.6) node [anchor=south] [inner sep=0.75pt]  [font=\tiny]  {$X$};
\draw (580,81.6) node [anchor=south] [inner sep=0.75pt]  [font=\tiny]  {$X$};
\draw (610,81.6) node [anchor=south] [inner sep=0.75pt]  [font=\tiny]  {$X$};
\draw (540,128.4) node [anchor=north] [inner sep=0.75pt]  [font=\tiny]  {$X$};
\draw (510,128.4) node [anchor=north] [inner sep=0.75pt]  [font=\tiny]  {$X$};
\draw (570,128.4) node [anchor=north] [inner sep=0.75pt]  [font=\tiny]  {$X$};
\draw (600,128.4) node [anchor=north] [inner sep=0.75pt]  [font=\tiny]  {$X$};
\draw (295.01,128.4) node [anchor=north] [inner sep=0.75pt]  [font=\tiny]  {$X$};
\draw (235,128.4) node [anchor=north] [inner sep=0.75pt]  [font=\tiny]  {$X$};
\draw (220,81.6) node [anchor=south] [inner sep=0.75pt]  [font=\tiny]  {$X$};
\draw (250,81.6) node [anchor=south] [inner sep=0.75pt]  [font=\tiny]  {$X$};
\draw (280.01,81.6) node [anchor=south] [inner sep=0.75pt]  [font=\tiny]  {$X$};
\draw (310.01,81.6) node [anchor=south] [inner sep=0.75pt]  [font=\tiny]  {$X$};
\draw (130,102.5) node    {$=$};
\draw (70,81.6) node [anchor=south] [inner sep=0.75pt]  [font=\tiny]  {$X$};
\draw (85,81.6) node [anchor=south] [inner sep=0.75pt]  [font=\tiny]  {$X$};
\draw (115,81.6) node [anchor=south] [inner sep=0.75pt]  [font=\tiny]  {$X$};
\draw (145,81.6) node [anchor=south] [inner sep=0.75pt]  [font=\tiny]  {$X$};
\draw (175,81.6) node [anchor=south] [inner sep=0.75pt]  [font=\tiny]  {$X$};
\draw (190,81.6) node [anchor=south] [inner sep=0.75pt]  [font=\tiny]  {$X$};
\draw (175,128.4) node [anchor=north] [inner sep=0.75pt]  [font=\tiny]  {$X$};
\draw (85,128.4) node [anchor=north] [inner sep=0.75pt]  [font=\tiny]  {$X$};
\draw (385,102.5) node    {$=$};
\draw (405,128.4) node [anchor=north] [inner sep=0.75pt]  [font=\tiny]  {$X$};
\draw (355,128.4) node [anchor=north] [inner sep=0.75pt]  [font=\tiny]  {$X$};
\draw (355,81.6) node [anchor=south] [inner sep=0.75pt]  [font=\tiny]  {$X$};
\draw (405,81.6) node [anchor=south] [inner sep=0.75pt]  [font=\tiny]  {$X$};

\end{tikzpicture}
   \caption{Axioms of comparators in a discrete cartesian restriction category.}%
  \label{fig:comparator-axioms}
\end{figure}

\PartialMarkovCategories{} blend \MarkovCategories{} and cartesian restriction categories while imposing neither of the axioms in \Cref{fig:copy-discard-markov}. They retain the ability to copy and discard resources, but its processes can be probabilistic and also partial.
In other words, \PartialMarkovCategories{} have a (not necessarily natural) comonoid structure on every object that makes them \emph{\copyDiscardCategories}~\cite{corradini_gadducci_1999,cho_2019,heunenvicary2019}.
The comonoid structure is crucial for expressing \emph{\conditionals} (\Cref{fig:conditionals}).
\Conditionals{} are not needed to express probabilistic processes but they are essential for reasoning about them: they ensure that every joint distribution can be split into a marginal and a conditional distribution.
\begin{figure}[h!]

\tikzset{every picture/.style={line width=0.75pt}} %

\begin{tikzpicture}[x=0.75pt,y=0.75pt,yscale=-1,xscale=1]
\draw   (130,50) -- (170,50) -- (170,65) -- (130,65) -- cycle ;
\draw    (150,50) -- (150,40) ;
\draw    (235,25) -- (235,15) ;
\draw    (250,100) -- (250,95) ;
\draw  [draw opacity=0] (180,40) -- (210,40) -- (210,65) -- (180,65) -- cycle ;
\draw  [fill={rgb, 255:red, 255; green, 255; blue, 255 }  ,fill opacity=1 ] (260,35) -- (240,35) -- (240,50) -- (260,50) -- cycle ;
\draw  [fill={rgb, 255:red, 255; green, 255; blue, 255 }  ,fill opacity=1 ] (260,80) -- (240,80) -- (240,95) -- (260,95) -- cycle ;
\draw    (235,25) .. controls (225.17,29.43) and (220,24.47) .. (220,35) ;
\draw  [fill={rgb, 255:red, 0; green, 0; blue, 0 }  ,fill opacity=1 ] (232,25) .. controls (232,23.34) and (233.34,22) .. (235,22) .. controls (236.66,22) and (238,23.34) .. (238,25) .. controls (238,26.66) and (236.66,28) .. (235,28) .. controls (233.34,28) and (232,26.66) .. (232,25) -- cycle ;
\draw    (235,25) .. controls (244.87,29.47) and (250,24.72) .. (250,35) ;
\draw    (250,60) -- (250,50) ;
\draw    (250,60) .. controls (240.17,64.43) and (235,59.47) .. (235,70) ;
\draw  [fill={rgb, 255:red, 0; green, 0; blue, 0 }  ,fill opacity=1 ] (247,60) .. controls (247,58.34) and (248.34,57) .. (250,57) .. controls (251.66,57) and (253,58.34) .. (253,60) .. controls (253,61.66) and (251.66,63) .. (250,63) .. controls (248.34,63) and (247,61.66) .. (247,60) -- cycle ;
\draw    (250,60) .. controls (259.88,64.47) and (265.49,60.22) .. (265,70) ;
\draw    (220,60) .. controls (219.75,69.22) and (245,69.97) .. (245,80) ;
\draw    (220,45) -- (220,35) ;
\draw    (220,60) -- (220,45) ;
\draw    (235,70) .. controls (234.75,79.22) and (220,74.97) .. (220,85) ;
\draw    (265,70) .. controls (265.83,75.56) and (254.71,74.03) .. (255,80) ;
\draw    (220,85) -- (220,100) ;
\draw    (140,75) -- (140,65) ;
\draw    (160,75) -- (160,65) ;

\draw (195,52.5) node    {$=$};
\draw (150,57.5) node  [font=\footnotesize]  {$f\lhd g$};
\draw (250,42.5) node  [font=\footnotesize]  {$f$};
\draw (150,36.6) node [anchor=south] [inner sep=0.75pt]  [font=\tiny]  {$X$};
\draw (235,11.6) node [anchor=south] [inner sep=0.75pt]  [font=\tiny]  {$X$};
\draw (140,78.4) node [anchor=north] [inner sep=0.75pt]  [font=\tiny]  {$Y$};
\draw (160,78.4) node [anchor=north] [inner sep=0.75pt]  [font=\tiny]  {$Z$};
\draw (248.5,87.5) node  [font=\footnotesize]  {$g$};
\draw (220,103.4) node [anchor=north] [inner sep=0.75pt]  [font=\tiny]  {$Y$};
\draw (250,103.4) node [anchor=north] [inner sep=0.75pt]  [font=\tiny]  {$Z$};

\end{tikzpicture}
   \caption{\protect\Conditionals{} split joint channels
  into a \protect\marginal{} $f$ and a \protect\conditional{} \(g\).}%
  \label{fig:conditionals}
\end{figure}
\medskip

\subsection{Example — the Three Prisoners Problem}%
\label{sec:threeprisoners}

To illustrate both the power of diagrammatic reasoning in \kl{Markov categories} and their limitations, let us consider the well-known \emph{Three Prisoners Problem}\footnote{As in Casella and Berger's book~\cite[Example~1.3.4]{casella2024statistical}, or Jacobs' recent treatment \cite{jacobs}.} as a running example.

\begin{quote}
Three prisoners, A, B and C are on death row. The governor has decided to pardon
one at random. This information is shared only with the warden. Prisoner A asks
the warden who is to be pardoned. The warden refuses to answer. Prisoner A then
follows by asking who is to be executed. The warden thinks for a while, and
answers that Prisoner B is to be executed. The warden thinks that no useful
information has been given to Prisoner A, since clearly one of Prisoner B or
Prisoner C will be executed. However, prisoner A is happy, thinking that his
chances of being pardoned are now 50\%. What has really been learned as a
result of this interaction?
\end{quote}

Some underlying implicit assumptions ought be clarified: \emph{(i)} the warden is assumed not to lie if not necessary, but \emph{(ii)} will avoid telling the asking prisoner that they are to be executed. Explicitly, \emph{(iii)} if the asking prisoner is the one to be pardoned, the warden randomly chooses between the two prisoners to be executed when answering.

To begin modelling this problem, we work in the \kl{Markov category} of finitary
stochastic channels $\Stoch$ (Definition~\ref{def:finitary-distributions}). Objects are sets, and morphisms $X → Y$ are (stochastic)
channels in the usual sense \cite{fritz_2020,jacobs}: functions $X → \distr(Y)$ where
$\distr(Y)$ is the set of finitely supported probability distributions on $Y$.

Let $\mathsf{Prisoners} = \{\mathsf{A}, \mathsf{B}, \mathsf{C}\}$ be the set of
prisoners. The governor's decision is modelled by the uniform distribution on
$\mathsf{Prisoners}$: that is, by the morphism $\mathsf{Pardoned}: 1\to \mathsf{Prisoners}$
mapping the singleton $1$ to the uniform distribution $1/3\ket{A}+ 1/3 \ket{B}+
1/3\ket{C}$. Prisoner A—the protagonist of the story—is modelled by an ordinary
function, $\mathsf{A}: 1\to \mathsf{Prisoners}$, corresponding to the
distribution $1\ket{A}$.
The warden's answering strategy, implied by our implicit assumptions, is modelled
by a morphism
$\mathsf{Answer}:\mathsf{Prisoners}\times\mathsf{Prisoners}\to\mathsf{Prisoners}$,
where the first argument is the prisoner asking the question and the second is
the prisoner to be pardoned. Concretely, it is defined by
\[
\mathsf{Answer}(x,y) = 
  \begin{cases}
    \frac{1}{2}\ket{p_1} + \frac{1}{2}\ket{p_2} & 
    \text{if $x=y$, where $\{p_1,p_2\}=\mathsf{Prisoners}\backslash \{x\}$;} \\
     1\ket{p} &
     \text{otherwise, where $\{p\}= \mathsf{Prisoners}\backslash \{x,y\}.$} \end{cases}
\]
The set-up for the Three Prisoners Problem can then be captured by the string diagram in \Cref{fig:threeprisoners}.
\begin{figure}[!ht]

\tikzset{every picture/.style={line width=0.75pt}} %

\begin{tikzpicture}[x=0.75pt,y=0.75pt,yscale=-1,xscale=1]
\draw    (90,35) -- (90,25) ;
\draw  [fill={rgb, 255:red, 0; green, 0; blue, 0 }  ,fill opacity=1 ] (87,35) .. controls (87,33.34) and (88.34,32) .. (90,32) .. controls (91.66,32) and (93,33.34) .. (93,35) .. controls (93,36.66) and (91.66,38) .. (90,38) .. controls (88.34,38) and (87,36.66) .. (87,35) -- cycle ;
\draw  [draw opacity=0] (120,30) -- (150,30) -- (150,55) -- (120,55) -- cycle ;
\draw   (65,10) -- (115,10) -- (115,25) -- (65,25) -- cycle ;
\draw   (30,45) -- (80,45) -- (80,60) -- (30,60) -- cycle ;
\draw   (30,10) -- (50,10) -- (50,25) -- (30,25) -- cycle ;
\draw    (90,35) .. controls (80.17,39.43) and (65,34.47) .. (65,45) ;
\draw    (90,35) .. controls (100.18,40.04) and (115,34.47) .. (115,45) ;
\draw    (40,25) .. controls (40.91,39.66) and (44.91,29.26) .. (45,45) ;
\draw    (55,60) -- (55,70) ;
\draw    (115,45) -- (115,70) ;

\draw (135,42.5) node    {$;$};
\draw (90,17.5) node  [font=\scriptsize]  {$\mathsf{Pardoned}$};
\draw (55,52.5) node  [font=\scriptsize]  {$\mathsf{Answer}$};
\draw (40,17.5) node  [font=\footnotesize]  {$A$};

\end{tikzpicture}
   \caption{Set-up for the Three Prisoners Problem.}
  \label{fig:threeprisoners}
\end{figure}
The result of this set-up is a map of type $1 → \mathsf{Prisoners} ×
\mathsf{Prisoners}$, which is the same thing as a (total) joint probability
distribution on $\mathsf{Prisoners} × \mathsf{Prisoners}$. In fact, a simple
calculation confirms that this distribution is
\begin{equation}%
  \label{eq:setup}
  \frac{1}{6}\ket{B,A}+\frac{1}{6}\ket{C,A}+\frac{1}{3}\ket{C,B}+\frac{1}{3}\ket{B,C}.
\end{equation}
This does not yet cover the crucial part of the Three Prisoners Problem—the
effect on this joint probability distribution obtained from learning the
warden's answer to Prisoner A's question. To do this satisfactorily, we need our
theory of probabilistic processes to deal with \emph{partial} distributions.

Let us illustrate the power of
\discretePartialMarkovCategories{} by reasoning about the Three Prisoners Problem.
We know that the warden answers ``Prisoner B'' in response to Prisoner A's
question. We can include this information in our model by introducing this as a
partial equality check, as in \Cref{fig:threePrisoners2}.
\begin{figure}[!h]

\tikzset{every picture/.style={line width=0.75pt}} %

\begin{tikzpicture}[x=0.75pt,y=0.75pt,yscale=-1,xscale=1]
\draw    (250,35) -- (250,25) ;
\draw  [fill={rgb, 255:red, 0; green, 0; blue, 0 }  ,fill opacity=1 ] (247,35) .. controls (247,33.34) and (248.34,32) .. (250,32) .. controls (251.66,32) and (253,33.34) .. (253,35) .. controls (253,36.66) and (251.66,38) .. (250,38) .. controls (248.34,38) and (247,36.66) .. (247,35) -- cycle ;
\draw  [draw opacity=0] (280,40) -- (310,40) -- (310,65) -- (280,65) -- cycle ;
\draw   (225,10) -- (275,10) -- (275,25) -- (225,25) -- cycle ;
\draw   (190,45) -- (240,45) -- (240,60) -- (190,60) -- cycle ;
\draw   (190,10) -- (210,10) -- (210,25) -- (190,25) -- cycle ;
\draw    (250,35) .. controls (240.17,39.43) and (225,34.47) .. (225,45) ;
\draw    (250,35) .. controls (260.18,40.04) and (275,34.47) .. (275,45) ;
\draw    (200,25) .. controls (200.91,39.66) and (204.91,29.26) .. (205,45) ;
\draw    (275,45) -- (275,80) ;
\draw   (155,45) -- (175,45) -- (175,60) -- (155,60) -- cycle ;
\draw    (190,70) -- (190,80) ;
\draw  [fill={rgb, 255:red, 0; green, 0; blue, 0 }  ,fill opacity=1 ] (187,70) .. controls (187,71.66) and (188.34,73) .. (190,73) .. controls (191.66,73) and (193,71.66) .. (193,70) .. controls (193,68.34) and (191.66,67) .. (190,67) .. controls (188.34,67) and (187,68.34) .. (187,70) -- cycle ;
\draw    (190,70) .. controls (180.17,65.57) and (165,70.53) .. (165,60) ;
\draw    (190,70) .. controls (200.18,64.96) and (215,70.53) .. (215,60) ;

\draw (295,52.5) node    {$;$};
\draw (250,17.5) node  [font=\scriptsize]  {$\mathsf{Pardoned}$};
\draw (215,52.5) node  [font=\scriptsize]  {$\mathsf{Answer}$};
\draw (200,17.5) node  [font=\footnotesize]  {$\mathsf{A}$};
\draw (165,52.5) node  [font=\footnotesize]  {$\mathsf{B}$};

\end{tikzpicture}
   \caption{Set-up for the Three Prisoners Problem, after the equality check.}
  \label{fig:threePrisoners2}
\end{figure}
The result is a kind of ``action at a distance'' collapse of the joint
probability distribution. One way to understand this is that the equality check
deletes all of the subterms of~\eqref{eq:setup} that do not agree with what has been
observed:
\begin{equation}
\label{eq:step2}
\frac{1}{6}\ket{B,A} + \frac{1}{6}\cancel{\ket{C,A}}+\frac{1}{3}\cancel{\ket{C,B}}+\frac{1}{3}\ket{B,C}
=
\frac{1}{6}\ket{B,A} + \frac{1}{3}\ket{B,C}.
\end{equation}
Note that the result is not anymore a proper probability distribution but a
\kl{subdistribution}—$1/6 + 1/3 ≠ 1$—which puts us outside the realm of \kl{Markov
categories}. 

Next, given that we are only interested in the probability of who has been
pardoned, we can marginalise on the warden's answer, obtaining the final set-up
in \Cref{fig:threePrisoners3}, which evaluates to the \kl{subdistribution}
$\frac{1}{6}\ket{A} + \frac{1}{3}\ket{C}$.
\begin{figure}[!h]

\tikzset{every picture/.style={line width=0.75pt}} %

\begin{tikzpicture}[x=0.75pt,y=0.75pt,yscale=-1,xscale=1]
\draw    (250,35) -- (250,25) ;
\draw  [fill={rgb, 255:red, 0; green, 0; blue, 0 }  ,fill opacity=1 ] (247,35) .. controls (247,33.34) and (248.34,32) .. (250,32) .. controls (251.66,32) and (253,33.34) .. (253,35) .. controls (253,36.66) and (251.66,38) .. (250,38) .. controls (248.34,38) and (247,36.66) .. (247,35) -- cycle ;
\draw  [draw opacity=0] (280,40) -- (310,40) -- (310,65) -- (280,65) -- cycle ;
\draw   (225,10) -- (275,10) -- (275,25) -- (225,25) -- cycle ;
\draw   (190,45) -- (240,45) -- (240,60) -- (190,60) -- cycle ;
\draw   (190,10) -- (210,10) -- (210,25) -- (190,25) -- cycle ;
\draw    (250,35) .. controls (240.17,39.43) and (225,34.47) .. (225,45) ;
\draw    (250,35) .. controls (260.18,40.04) and (275,34.47) .. (275,45) ;
\draw    (200,25) .. controls (200.91,39.66) and (204.91,29.26) .. (205,45) ;
\draw    (275,45) -- (275,90) ;
\draw   (155,45) -- (175,45) -- (175,60) -- (155,60) -- cycle ;
\draw    (190,70) -- (190,80) ;
\draw  [fill={rgb, 255:red, 0; green, 0; blue, 0 }  ,fill opacity=1 ] (187,70) .. controls (187,71.66) and (188.34,73) .. (190,73) .. controls (191.66,73) and (193,71.66) .. (193,70) .. controls (193,68.34) and (191.66,67) .. (190,67) .. controls (188.34,67) and (187,68.34) .. (187,70) -- cycle ;
\draw    (190,70) .. controls (180.17,65.57) and (165,70.53) .. (165,60) ;
\draw    (190,70) .. controls (200.18,64.96) and (215,70.53) .. (215,60) ;
\draw  [fill={rgb, 255:red, 0; green, 0; blue, 0 }  ,fill opacity=1 ] (187,80) .. controls (187,78.34) and (188.34,77) .. (190,77) .. controls (191.66,77) and (193,78.34) .. (193,80) .. controls (193,81.66) and (191.66,83) .. (190,83) .. controls (188.34,83) and (187,81.66) .. (187,80) -- cycle ;

\draw (295,52.5) node    {$;$};
\draw (250,17.5) node  [font=\scriptsize]  {$\mathsf{Pardoned}$};
\draw (215,52.5) node  [font=\scriptsize]  {$\mathsf{Answer}$};
\draw (200,17.5) node  [font=\footnotesize]  {$\mathsf{A}$};
\draw (165,52.5) node  [font=\footnotesize]  {$\mathsf{B}$};

\end{tikzpicture}
   \caption{Set-up for the Three Prisoners Problem, after marginalisation.}
  \label{fig:threePrisoners3}
\end{figure}

As a final step, we can \kl{normalise}: the axioms of \kl{partial Markov
categories} imply that each channel factors into \emph{(i)} a scalar obtained by
discarding its output—called its \emph{validity}—and \emph{(ii)} a new
channel—called its \emph{normalization}—that we represent with a blue box around
the original morphism. The resulting validity can be computed to be $1/2$. The
\kl{normalised} (proper) probability distribution is $\frac{1}{3}\ket{A} +
\frac{2}{3}\ket{C}$.
\begin{figure}[!h]

\tikzset{every picture/.style={line width=0.75pt}} %

\begin{tikzpicture}[x=0.75pt,y=0.75pt,yscale=-1,xscale=1]
\draw    (150,35) -- (150,25) ;
\draw  [fill={rgb, 255:red, 0; green, 0; blue, 0 }  ,fill opacity=1 ] (147,35) .. controls (147,33.34) and (148.34,32) .. (150,32) .. controls (151.66,32) and (153,33.34) .. (153,35) .. controls (153,36.66) and (151.66,38) .. (150,38) .. controls (148.34,38) and (147,36.66) .. (147,35) -- cycle ;
\draw  [draw opacity=0] (180,40) -- (210,40) -- (210,65) -- (180,65) -- cycle ;
\draw   (125,10) -- (175,10) -- (175,25) -- (125,25) -- cycle ;
\draw   (90,45) -- (140,45) -- (140,60) -- (90,60) -- cycle ;
\draw   (90,10) -- (110,10) -- (110,25) -- (90,25) -- cycle ;
\draw    (150,35) .. controls (140.17,39.43) and (125,34.47) .. (125,45) ;
\draw    (150,35) .. controls (160.18,40.04) and (175,34.47) .. (175,45) ;
\draw    (100,25) .. controls (100.91,39.66) and (104.91,29.26) .. (105,45) ;
\draw    (175,45) -- (175,90) ;
\draw   (55,45) -- (75,45) -- (75,60) -- (55,60) -- cycle ;
\draw    (90,70) -- (90,80) ;
\draw  [fill={rgb, 255:red, 0; green, 0; blue, 0 }  ,fill opacity=1 ] (87,70) .. controls (87,71.66) and (88.34,73) .. (90,73) .. controls (91.66,73) and (93,71.66) .. (93,70) .. controls (93,68.34) and (91.66,67) .. (90,67) .. controls (88.34,67) and (87,68.34) .. (87,70) -- cycle ;
\draw    (90,70) .. controls (80.17,65.57) and (65,70.53) .. (65,60) ;
\draw    (90,70) .. controls (100.18,64.96) and (115,70.53) .. (115,60) ;
\draw  [fill={rgb, 255:red, 0; green, 0; blue, 0 }  ,fill opacity=1 ] (87,80) .. controls (87,78.34) and (88.34,77) .. (90,77) .. controls (91.66,77) and (93,78.34) .. (93,80) .. controls (93,81.66) and (91.66,83) .. (90,83) .. controls (88.34,83) and (87,81.66) .. (87,80) -- cycle ;
\draw  [color={rgb, 255:red, 129; green, 161; blue, 193 }  ,draw opacity=0.75 ][fill={rgb, 255:red, 129; green, 161; blue, 193 }  ,fill opacity=0.25 ] (350,12.84) .. controls (350,8.51) and (353.51,5) .. (357.84,5) -- (472.16,5) .. controls (476.49,5) and (480,8.51) .. (480,12.84) -- (480,77.16) .. controls (480,81.49) and (476.49,85) .. (472.16,85) -- (357.84,85) .. controls (353.51,85) and (350,81.49) .. (350,77.16) -- cycle ;
\draw    (450,35.5) -- (450,25.5) ;
\draw  [fill={rgb, 255:red, 0; green, 0; blue, 0 }  ,fill opacity=1 ] (447,35.5) .. controls (447,33.84) and (448.34,32.5) .. (450,32.5) .. controls (451.66,32.5) and (453,33.84) .. (453,35.5) .. controls (453,37.16) and (451.66,38.5) .. (450,38.5) .. controls (448.34,38.5) and (447,37.16) .. (447,35.5) -- cycle ;
\draw   (425,10.5) -- (475,10.5) -- (475,25.5) -- (425,25.5) -- cycle ;
\draw   (390,45.5) -- (440,45.5) -- (440,60.5) -- (390,60.5) -- cycle ;
\draw   (390,10.5) -- (410,10.5) -- (410,25.5) -- (390,25.5) -- cycle ;
\draw    (450,35.5) .. controls (440.17,39.93) and (425,34.97) .. (425,45.5) ;
\draw    (450,35.5) .. controls (460.18,40.54) and (475,34.97) .. (475,45.5) ;
\draw    (400,25.5) .. controls (400.91,40.16) and (404.91,29.76) .. (405,45.5) ;
\draw   (355,45.5) -- (375,45.5) -- (375,60.5) -- (355,60.5) -- cycle ;
\draw    (390,70.5) -- (390,80.5) ;
\draw  [fill={rgb, 255:red, 0; green, 0; blue, 0 }  ,fill opacity=1 ] (387,70.5) .. controls (387,72.16) and (388.34,73.5) .. (390,73.5) .. controls (391.66,73.5) and (393,72.16) .. (393,70.5) .. controls (393,68.84) and (391.66,67.5) .. (390,67.5) .. controls (388.34,67.5) and (387,68.84) .. (387,70.5) -- cycle ;
\draw    (390,70.5) .. controls (380.17,66.07) and (365,71.03) .. (365,60.5) ;
\draw    (390,70.5) .. controls (400.18,65.46) and (415,71.03) .. (415,60.5) ;
\draw  [fill={rgb, 255:red, 0; green, 0; blue, 0 }  ,fill opacity=1 ] (387,80.5) .. controls (387,78.84) and (388.34,77.5) .. (390,77.5) .. controls (391.66,77.5) and (393,78.84) .. (393,80.5) .. controls (393,82.16) and (391.66,83.5) .. (390,83.5) .. controls (388.34,83.5) and (387,82.16) .. (387,80.5) -- cycle ;
\draw    (475,45) .. controls (474.32,64.37) and (459.65,70.03) .. (460,95) ;
\draw  [draw opacity=0] (480,35) -- (510,35) -- (510,60) -- (480,60) -- cycle ;
\draw    (310,35.5) -- (310,25.5) ;
\draw  [fill={rgb, 255:red, 0; green, 0; blue, 0 }  ,fill opacity=1 ] (307,35.5) .. controls (307,33.84) and (308.34,32.5) .. (310,32.5) .. controls (311.66,32.5) and (313,33.84) .. (313,35.5) .. controls (313,37.16) and (311.66,38.5) .. (310,38.5) .. controls (308.34,38.5) and (307,37.16) .. (307,35.5) -- cycle ;
\draw   (285,10.5) -- (335,10.5) -- (335,25.5) -- (285,25.5) -- cycle ;
\draw   (250,45.5) -- (300,45.5) -- (300,60.5) -- (250,60.5) -- cycle ;
\draw   (250,10.5) -- (270,10.5) -- (270,25.5) -- (250,25.5) -- cycle ;
\draw    (310,35.5) .. controls (300.17,39.93) and (285,34.97) .. (285,45.5) ;
\draw    (310,35.5) .. controls (320.18,40.54) and (335,34.97) .. (335,45.5) ;
\draw    (260,25.5) .. controls (260.91,40.16) and (264.91,29.76) .. (265,45.5) ;
\draw    (335,45.5) -- (335,55) ;
\draw   (215,45.5) -- (235,45.5) -- (235,60.5) -- (215,60.5) -- cycle ;
\draw    (250,70.5) -- (250,80.5) ;
\draw  [fill={rgb, 255:red, 0; green, 0; blue, 0 }  ,fill opacity=1 ] (247,70.5) .. controls (247,72.16) and (248.34,73.5) .. (250,73.5) .. controls (251.66,73.5) and (253,72.16) .. (253,70.5) .. controls (253,68.84) and (251.66,67.5) .. (250,67.5) .. controls (248.34,67.5) and (247,68.84) .. (247,70.5) -- cycle ;
\draw    (250,70.5) .. controls (240.17,66.07) and (225,71.03) .. (225,60.5) ;
\draw    (250,70.5) .. controls (260.18,65.46) and (275,71.03) .. (275,60.5) ;
\draw  [fill={rgb, 255:red, 0; green, 0; blue, 0 }  ,fill opacity=1 ] (247,80.5) .. controls (247,78.84) and (248.34,77.5) .. (250,77.5) .. controls (251.66,77.5) and (253,78.84) .. (253,80.5) .. controls (253,82.16) and (251.66,83.5) .. (250,83.5) .. controls (248.34,83.5) and (247,82.16) .. (247,80.5) -- cycle ;
\draw  [fill={rgb, 255:red, 0; green, 0; blue, 0 }  ,fill opacity=1 ] (332,55) .. controls (332,53.34) and (333.34,52) .. (335,52) .. controls (336.66,52) and (338,53.34) .. (338,55) .. controls (338,56.66) and (336.66,58) .. (335,58) .. controls (333.34,58) and (332,56.66) .. (332,55) -- cycle ;

\draw (195,52.5) node    {$=$};
\draw (150,17.5) node  [font=\scriptsize]  {$\mathsf{Pardoned}$};
\draw (115,52.5) node  [font=\scriptsize]  {$\mathsf{Answer}$};
\draw (100,17.5) node  [font=\footnotesize]  {$\mathsf{A}$};
\draw (65,52.5) node  [font=\footnotesize]  {$\mathsf{B}$};
\draw (450,18) node  [font=\scriptsize]  {$\mathsf{Pardoned}$};
\draw (415,53) node  [font=\scriptsize]  {$\mathsf{Answer}$};
\draw (400,18) node  [font=\footnotesize]  {$\mathsf{A}$};
\draw (365,53) node  [font=\footnotesize]  {$\mathsf{B}$};
\draw (495,47.5) node    {$;$};
\draw (310,18) node  [font=\scriptsize]  {$\mathsf{Pardoned}$};
\draw (275,53) node  [font=\scriptsize]  {$\mathsf{Answer}$};
\draw (260,18) node  [font=\footnotesize]  {$\mathsf{A}$};
\draw (225,53) node  [font=\footnotesize]  {$\mathsf{B}$};

\end{tikzpicture}
   \caption{Set-up for the Three Prisoners Problem, after normalization.}
  \label{fig:threePrisoners4}
\end{figure}

Perhaps unexpectedly, we conclude that Prisoner A still (only) has a $1/3$
probability of a pardon; instead, the computation shows how the chances of
prisoner $C$ have increased to $2/3$. The formal probabilistic calculus produces
this solution without relying on intuition.

\subsection{Contributions}%
\label{sec:Contributions}

Our main contribution is the algebra of \partialMarkovCategories{}
(\Cref{def:partialMarkov}) and \discretePartialMarkovCategories{}
(\Cref{def:discrete-partial-markov-cat}).
We introduce a synthetic description of \normalisation{} ($\normal{-}$, \Cref{def:normalisation}), and we prove its compositional properties, up to
\almostSureEquality{}, in the abstract setting of \partialMarkovCategories{}:
for instance, $\normal{f ⨾ g} = \normal{\normal{f} ⨾ g}$
(\Cref{prop:normalisationPrecomposes}) and, $\normal{\normal{f}} = \normal{f}$
(\Cref{prop:normalisationsIdempotent}).
We prove a synthetic version of \BayesTheorem{} that holds in any
\discretePartialMarkovCategory{} (\Cref{th:bayes}). We apply this framework to
compare \Pearls{} and \JeffreysUpdate{} rules
(\Cref{def:pearl-update,def:jeffrey-update,prop:pearlandjeffrey}). We prove that
the Kleisli category of the \emph{Maybe} monad over a \MarkovCategory{} is a
\partialMarkovCategory{} (\Cref{thm:partialMarkovMaybeMonad}).

Finally, in \Cref{sec:exactObservations}, we provide an algebra of \exactObservations{} (\Cref{def:exact-conditioning-cat}). Our main result is the construction of a \partialMarkovCategory{} on top of any \MarkovCategory{} where a synthetic Bayes' Theorem holds (\Cref{th:bayes-exact-observations}); in this construction, \exactObservations{} are expressed via \kl{conditionals} of the base \MarkovCategory{} (\Cref{prop:normalisation-of-programs,prop:conditionals-of-programs}). The \partialMarkovCategory{} of \exactObservations{} expresses deterministic observations even in non-discrete cases, like that of continuous probabilistic processes (\Cref{prop:normalisation-of-programs,prop:conditionals-of-programs}). We illustrate these with an example.

\subsection{Related work}

\subsubsection{Markov categories} The categorical approach to probability theory
based on \MarkovCategories{}~\cite{fritz_2020} has led to the abstraction of
various
results~\cite{fritz2020infinite,fritz2021probability,fritz2019probability,fitz2021deFinetti}.
\MarkovCategories{} have also been applied in the formalisation of Bayes
networks and other kinds of probabilistic reasoning in categorical
terms~\cite{fong_2013, jacobs2020logical, jacobs2021causal}. The breadth of
results and applications of \MarkovCategories{} suggest that there can be an
equally rich landscape for their partial counterpart.
Simpson’s probability sheaves
constitute another approach to synthetic probability theory \cite{simpson17,simpson24}; for which Stein recently proposed a comparison to Markov categories \cite{stein25}.

\subsubsection{Categories of partial maps} Partiality has long been studied in
Computer Science, and even categorical approaches to it date back to the works
of Carboni~\cite{carboni1987bicategories}, Di Paola and
Heller~\cite{di1987dominical}, Robinson and
Rosolini~\cite{robinson1988categories}, and Curien and
Obtu{\l}owicz~\cite{curien_obtulowicz_1989}. %
However, our categorical structures are related to the more recent work on \emph{restriction categories} by Cockett and Lack~\cite{cockett02,cockett2003restriction}, and, in particular, \emph{cartesian restriction categories}~\cite{cockett2007restriction} and \emph{discrete cartesian restriction categories}~\cite{cockett2012range2,di_liberti_nester_2021}. 
Explicitly, we blend the string diagrammatic axiomatization of cartesian restriction categories \cite{di_liberti_nester_2021} with that of \MarkovCategories{} \cite{cho2015introduction,fritz_2020}; in general, the result is neither a \MarkovCategory{} nor a cartesian restriction category.

\subsubsection{Copy-discard categories}
\CopyDiscardCategories{} originally appear in graph
rewriting~\cite{corradini_gadducci_1999}, under the name of \emph{GS-monoidal}
categories. \CopyDiscardCategories{} of partial probabilistic processes—and even
the \emph{comparator} morphism—have been previously
considered~\cite{panangaden1999, jacobs2018probability, cho_2019,
stein_thesis_2021}, but no comprehensive explicit presentation was given.
\BayesianInversion{} for compact closed \copyDiscardCategories{} appears in the
work of Coecke and Spekkens~\cite{coeckespekkens2012picturing}; the explicit
relationship between that definition and \Cref{def:bayes-inversion} might
involve \normalisation{}, but we leave it for further work.

\subsubsection{Categorical semantics of probabilistic programming}
There exists a vast literature on categorical semantics for probabilistic
programming languages (for the most closely related, see e.g.~\cite{hasegawa97,
stay2013bicategorical, staton_et_al_2016, heunen_kammar_staton_yang_2017,
ehrhard2017measurable, dahlqvist19semantics, vakar2019domain}). However, while
the internal language of \MarkovCategories{} has been studied
\cite{fritz_liang_2022}, the notion of \partialMarkovCategory{} and its
diagrammatic syntax have remained unexplored. Stein and
Staton~\cite{stein2021compositional, stein_thesis_2021} recently presented the
$\mathbf{Cond}$ construction for exact conditioning: in a similar fashion, we
construct a \partialMarkovCategory{} of \exactObservations{} in
\Cref{def:exact-conditioning-cat}, the relation between both again
seems to involve \normalisation{} and warrants further work.

\section{Background on Markov Categories}%
\label{sec:markovCategories}%

This section recalls background on categorical probability. Readers familiar with Markov categories are encouraged to proceed to Section 3 and refer back when needed. 

Categorical probability starts from a weakening of cartesian monoidal categories:
\kl{copy-discard categories}. \kl{Copy-discard categories} allow us to use the
same resource more than once or not at all, exactly as cartesian monoidal
categories; unlike them, they do not allow processes themselves to be
copied: in fact, copyable processes must be \kl{deterministic}. Because probabilistic
processes fail to be \kl{deterministic}, we are led to consider
\kl{copy-discard categories}—instead of cartesian categories—as a semantic basis
for probability theory.

\kl{Copy-discard categories} are an algebra of processes that compose
sequentially and in parallel, with nodes that ``copy and discard'' resources—the
inputs and outputs of processes. They possess a convenient sound and complete
syntax in terms of string diagrams~\cite{joyal91,fritz_liang_2022}. A particular
family of \kl{copy-discard categories}—the more specialized \kl{Markov
categories}—allow us to reason about probabilistic processes.

\subsection{Copy-Discard Categories}%
\label{sec:copyDiscardCategories}%
\defining{linkCopyDiscard}{}%

\AP \intro{Copy-discard categories} are categories where every object \m{X} has a \emph{copy} morphism, \m{ν_X ፡ X → X ⊗ X}, and a \emph{discard} morphism, \m{ε_X ፡ X → I}, forming a commutative comonoid (\Cref{diagram-copy-discard})\footnote{\CopyDiscardCategories{} have been called \emph{GS-monoidal categories} when applied to graph rewriting~\cite{corradini_gadducci_1999} and \emph{CD-categories} when applied to unnormalised probabilistic processes~\cite{cho_2019}.  In categorical quantum mechanics, they are \emph{copy and delete}~\cite{heunenvicary2019}. Fritz and Liang~\cite[Remark 2.2]{fritz_liang_2022} expand on the history of this structure.}. The comonoid on the tensor of two objects, $X ⊗ Y$, must be the
tensor of their comonoid structures: \m{ν_{X ⊗ Y} = (ν_X ⊗ ν_Y) ⨾ (\id{} ⊗ σ ⊗
\id{})} and $ε_{X ⊗ Y} = ε_X ⊗ ε_Y$. The comonoid on the unit object must be the
identity, $ν_I = \id{I}$ and $ε_I = \id{I}$.
\begin{figure}[!ht]

\tikzset{every picture/.style={line width=0.75pt}} %

\begin{tikzpicture}[x=0.75pt,y=0.75pt,yscale=-1,xscale=1]
\draw    (115,60) -- (115,45) ;
\draw  [draw opacity=0] (160,25) -- (190,25) -- (190,50) -- (160,50) -- cycle ;
\draw  [draw opacity=0] (235,25) -- (265,25) -- (265,50) -- (235,50) -- cycle ;
\draw    (290,60) -- (290,50) ;
\draw  [draw opacity=0] (325,25) -- (355,25) -- (355,50) -- (325,50) -- cycle ;
\draw  [draw opacity=0] (370,25) -- (400,25) -- (400,50) -- (370,50) -- cycle ;
\draw    (130,30) -- (130,20) ;
\draw    (130,30) .. controls (120.17,34.43) and (115,34.47) .. (115,45) ;
\draw  [fill={rgb, 255:red, 0; green, 0; blue, 0 }  ,fill opacity=1 ] (127,30) .. controls (127,28.34) and (128.34,27) .. (130,27) .. controls (131.66,27) and (133,28.34) .. (133,30) .. controls (133,31.66) and (131.66,33) .. (130,33) .. controls (128.34,33) and (127,31.66) .. (127,30) -- cycle ;
\draw    (130,30) .. controls (139.88,34.47) and (145,34.72) .. (145,45) ;
\draw    (145,45) .. controls (135.17,49.43) and (130,49.47) .. (130,60) ;
\draw  [fill={rgb, 255:red, 0; green, 0; blue, 0 }  ,fill opacity=1 ] (142,45) .. controls (142,43.34) and (143.34,42) .. (145,42) .. controls (146.66,42) and (148,43.34) .. (148,45) .. controls (148,46.66) and (146.66,48) .. (145,48) .. controls (143.34,48) and (142,46.66) .. (142,45) -- cycle ;
\draw    (145,45) .. controls (154.88,49.47) and (160,49.72) .. (160,60) ;
\draw    (220,30) -- (220,20) ;
\draw    (220,30) .. controls (210.17,34.43) and (205,34.47) .. (205,45) ;
\draw  [fill={rgb, 255:red, 0; green, 0; blue, 0 }  ,fill opacity=1 ] (217,30) .. controls (217,28.34) and (218.34,27) .. (220,27) .. controls (221.66,27) and (223,28.34) .. (223,30) .. controls (223,31.66) and (221.66,33) .. (220,33) .. controls (218.34,33) and (217,31.66) .. (217,30) -- cycle ;
\draw    (220,30) .. controls (229.88,34.47) and (235,34.72) .. (235,45) ;
\draw    (235,60) -- (235,45) ;
\draw    (205,45) .. controls (195.17,49.43) and (190,49.47) .. (190,60) ;
\draw  [fill={rgb, 255:red, 0; green, 0; blue, 0 }  ,fill opacity=1 ] (202,45) .. controls (202,43.34) and (203.34,42) .. (205,42) .. controls (206.66,42) and (208,43.34) .. (208,45) .. controls (208,46.66) and (206.66,48) .. (205,48) .. controls (203.34,48) and (202,46.66) .. (202,45) -- cycle ;
\draw    (205,45) .. controls (214.88,49.47) and (220,49.72) .. (220,60) ;
\draw    (305,35) -- (305,20) ;
\draw    (305,35) .. controls (295.17,39.43) and (290,39.47) .. (290,50) ;
\draw  [fill={rgb, 255:red, 0; green, 0; blue, 0 }  ,fill opacity=1 ] (302,35) .. controls (302,33.34) and (303.34,32) .. (305,32) .. controls (306.66,32) and (308,33.34) .. (308,35) .. controls (308,36.66) and (306.66,38) .. (305,38) .. controls (303.34,38) and (302,36.66) .. (302,35) -- cycle ;
\draw    (305,35) .. controls (314.88,39.47) and (320,39.72) .. (320,50) ;
\draw  [fill={rgb, 255:red, 0; green, 0; blue, 0 }  ,fill opacity=1 ] (317,50) .. controls (317,48.34) and (318.34,47) .. (320,47) .. controls (321.66,47) and (323,48.34) .. (323,50) .. controls (323,51.66) and (321.66,53) .. (320,53) .. controls (318.34,53) and (317,51.66) .. (317,50) -- cycle ;
\draw    (365,60) -- (365,20) ;
\draw  [draw opacity=0] (460,25) -- (490,25) -- (490,50) -- (460,50) -- cycle ;
\draw  [draw opacity=0] (520,25) -- (550,25) -- (550,50) -- (520,50) -- cycle ;
\draw    (440,30) -- (440,20) ;
\draw    (440,30) .. controls (430.17,34.43) and (425.29,34.31) .. (425,40) ;
\draw  [fill={rgb, 255:red, 0; green, 0; blue, 0 }  ,fill opacity=1 ] (437,30) .. controls (437,28.34) and (438.34,27) .. (440,27) .. controls (441.66,27) and (443,28.34) .. (443,30) .. controls (443,31.66) and (441.66,33) .. (440,33) .. controls (438.34,33) and (437,31.66) .. (437,30) -- cycle ;
\draw    (440,30) .. controls (449.88,34.47) and (454.86,34.6) .. (455,40) ;
\draw    (425,40) .. controls (425.5,50.43) and (454.83,49.77) .. (455,60) ;
\draw    (455,40) .. controls (455.5,50.43) and (424.83,49.77) .. (425,60) ;
\draw    (505,35) -- (505,20) ;
\draw    (505,35) .. controls (495.17,39.43) and (490,39.47) .. (490,50) ;
\draw  [fill={rgb, 255:red, 0; green, 0; blue, 0 }  ,fill opacity=1 ] (502,35) .. controls (502,33.34) and (503.34,32) .. (505,32) .. controls (506.66,32) and (508,33.34) .. (508,35) .. controls (508,36.66) and (506.66,38) .. (505,38) .. controls (503.34,38) and (502,36.66) .. (502,35) -- cycle ;
\draw    (505,35) .. controls (514.88,39.47) and (520,39.72) .. (520,50) ;
\draw    (490,60) -- (490,50) ;
\draw    (520,60) -- (520,50) ;

\draw (175,37.5) node    {$=$};
\draw (250,37.5) node    {$;$};
\draw (340,37.5) node    {$=$};
\draw (385,37.5) node    {$;$};
\draw (475,37.5) node    {$=$};
\draw (535,37.5) node    {$;$};
\draw (130,16.6) node [anchor=south] [inner sep=0.75pt]  [font=\tiny]  {$X$};
\draw (220,16.6) node [anchor=south] [inner sep=0.75pt]  [font=\tiny]  {$X$};
\draw (305,16.6) node [anchor=south] [inner sep=0.75pt]  [font=\tiny]  {$X$};
\draw (365,16.6) node [anchor=south] [inner sep=0.75pt]  [font=\tiny]  {$X$};
\draw (440,16.6) node [anchor=south] [inner sep=0.75pt]  [font=\tiny]  {$X$};
\draw (505,16.6) node [anchor=south] [inner sep=0.75pt]  [font=\tiny]  {$X$};
\draw (520,63.4) node [anchor=north] [inner sep=0.75pt]  [font=\tiny]  {$X$};
\draw (490,63.4) node [anchor=north] [inner sep=0.75pt]  [font=\tiny]  {$X$};
\draw (455,63.4) node [anchor=north] [inner sep=0.75pt]  [font=\tiny]  {$X$};
\draw (425,63.4) node [anchor=north] [inner sep=0.75pt]  [font=\tiny]  {$X$};
\draw (365,63.4) node [anchor=north] [inner sep=0.75pt]  [font=\tiny]  {$X$};
\draw (290,63.4) node [anchor=north] [inner sep=0.75pt]  [font=\tiny]  {$X$};
\draw (235,63.4) node [anchor=north] [inner sep=0.75pt]  [font=\tiny]  {$X$};
\draw (220,63.4) node [anchor=north] [inner sep=0.75pt]  [font=\tiny]  {$X$};
\draw (190,63.4) node [anchor=north] [inner sep=0.75pt]  [font=\tiny]  {$X$};
\draw (115,63.4) node [anchor=north] [inner sep=0.75pt]  [font=\tiny]  {$X$};
\draw (130,63.4) node [anchor=north] [inner sep=0.75pt]  [font=\tiny]  {$X$};
\draw (160,63.4) node [anchor=north] [inner sep=0.75pt]  [font=\tiny]  {$X$};

\end{tikzpicture}
   \caption{Axioms of a commutative comonoid.}
  \label{diagram-copy-discard}
\end{figure}

Some morphisms will be copied and discarded by the \emph{copy} and \emph{discard} morphisms: we call these \emph{\deterministic{}} and \emph{\total{}}, respectively. However, contrary to what happens in cartesian categories \cite{fox76}, \emph{copy} and \emph{discard} are not necessarily natural transformations: this corresponds to the fact that morphisms are not always \deterministic{} nor \total{}.

\begin{defi}[Deterministic and total morphisms]
\label{def:total_deterministic}%
\defining{linkTotalMorphism}%
\defining{linkDeterministicMorphism}%
  A morphism \m{f ፡ X → Y} in a \copyDiscardCategory{} is called
  \emph{deterministic} if $f ⨾ ν_{Y} = ν_{X} ⨾ (f ⊗ f)$
  (\Cref{diagram-total-deterministic}, left); a morphism $g ፡ X → Y$ is called
  \emph{total} if $g ⨾ ε_{Y} = ε_{X}$ (\Cref{diagram-total-deterministic},
  right).
  \begin{figure}[h!]

\tikzset{every picture/.style={line width=0.75pt}} %

\begin{tikzpicture}[x=0.75pt,y=0.75pt,yscale=-1,xscale=1]
\draw   (140,30.5) -- (160,30.5) -- (160,45.5) -- (140,45.5) -- cycle ;
\draw    (150,30) -- (150,20) ;
\draw    (150,55) -- (150,45) ;
\draw    (150,55) .. controls (140.17,59.43) and (135,59.47) .. (135,70) ;
\draw  [fill={rgb, 255:red, 0; green, 0; blue, 0 }  ,fill opacity=1 ] (147,55) .. controls (147,53.34) and (148.34,52) .. (150,52) .. controls (151.66,52) and (153,53.34) .. (153,55) .. controls (153,56.66) and (151.66,58) .. (150,58) .. controls (148.34,58) and (147,56.66) .. (147,55) -- cycle ;
\draw    (150,55) .. controls (159.88,59.47) and (165,59.72) .. (165,70) ;
\draw   (195,45) -- (215,45) -- (215,60) -- (195,60) -- cycle ;
\draw   (225,45) -- (245,45) -- (245,60) -- (225,60) -- cycle ;
\draw    (220,30) -- (220,20) ;
\draw    (220,30) .. controls (210.17,34.43) and (205,34.47) .. (205,45) ;
\draw  [fill={rgb, 255:red, 0; green, 0; blue, 0 }  ,fill opacity=1 ] (217,30) .. controls (217,28.34) and (218.34,27) .. (220,27) .. controls (221.66,27) and (223,28.34) .. (223,30) .. controls (223,31.66) and (221.66,33) .. (220,33) .. controls (218.34,33) and (217,31.66) .. (217,30) -- cycle ;
\draw    (220,30) .. controls (229.88,34.47) and (235,34.72) .. (235,45) ;
\draw    (205,70) -- (205,60) ;
\draw    (235,70) -- (235,60) ;
\draw  [draw opacity=0] (165,30) -- (195,30) -- (195,55) -- (165,55) -- cycle ;
\draw  [draw opacity=0] (245,30) -- (275,30) -- (275,55) -- (245,55) -- cycle ;
\draw   (330,30.5) -- (350,30.5) -- (350,45.5) -- (330,45.5) -- cycle ;
\draw    (340,30) -- (340,20) ;
\draw    (340,55) -- (340,45) ;
\draw  [fill={rgb, 255:red, 0; green, 0; blue, 0 }  ,fill opacity=1 ] (337,55) .. controls (337,53.34) and (338.34,52) .. (340,52) .. controls (341.66,52) and (343,53.34) .. (343,55) .. controls (343,56.66) and (341.66,58) .. (340,58) .. controls (338.34,58) and (337,56.66) .. (337,55) -- cycle ;
\draw    (395,40) -- (395,20) ;
\draw  [fill={rgb, 255:red, 0; green, 0; blue, 0 }  ,fill opacity=1 ] (392,40) .. controls (392,38.34) and (393.34,37) .. (395,37) .. controls (396.66,37) and (398,38.34) .. (398,40) .. controls (398,41.66) and (396.66,43) .. (395,43) .. controls (393.34,43) and (392,41.66) .. (392,40) -- cycle ;
\draw  [draw opacity=0] (355,30) -- (385,30) -- (385,55) -- (355,55) -- cycle ;
\draw  [draw opacity=0] (405,30) -- (435,30) -- (435,55) -- (405,55) -- cycle ;

\draw (180,42.5) node    {$=$};
\draw (150,38) node  [font=\footnotesize]  {$f$};
\draw (260,42.5) node    {$;$};
\draw (370,42.5) node    {$=$};
\draw (420,42.5) node    {$;$};
\draw (205,52.5) node  [font=\footnotesize]  {$f$};
\draw (235,52.5) node  [font=\footnotesize]  {$f$};
\draw (340,38) node  [font=\footnotesize]  {$f$};
\draw (150,16.6) node [anchor=south] [inner sep=0.75pt]  [font=\tiny]  {$X$};
\draw (220,16.6) node [anchor=south] [inner sep=0.75pt]  [font=\tiny]  {$X$};
\draw (340,16.6) node [anchor=south] [inner sep=0.75pt]  [font=\tiny]  {$X$};
\draw (395,16.6) node [anchor=south] [inner sep=0.75pt]  [font=\tiny]  {$X$};
\draw (135,73.4) node [anchor=north] [inner sep=0.75pt]  [font=\tiny]  {$Y$};
\draw (165,73.4) node [anchor=north] [inner sep=0.75pt]  [font=\tiny]  {$Y$};
\draw (205,73.4) node [anchor=north] [inner sep=0.75pt]  [font=\tiny]  {$Y$};
\draw (235,73.4) node [anchor=north] [inner sep=0.75pt]  [font=\tiny]  {$Y$};

\end{tikzpicture}
     \caption{\protect\Deterministic{} morphism (left) and \protect\total{}
    morphism (right).}%
    \label{diagram-total-deterministic}%
  \end{figure}
\end{defi}

The algebra of \copyDiscardCategories{} is already enough to discuss the
first basic concepts from probability theory: \emph{\kl{marginals}},
\emph{\kl{conditionals}}, and \emph{\almostSureEquality{}}.

\subsection{Marginals and conditionals}

This section recasts \conditionals{} and their properties (\Cref{def:conditional}) in the partial setting; their definition needs an auxiliary operation—\kl{conditional composition}—that extends a \kl{marginal} with a given morphism (\Cref{def:conditionalExtension} and \Cref{diagram-conditionals}).

\begin{defi}[Marginals]
\defining{linkProjection}{}%
\defining{linkMarginal}{}%
  The \emph{projections} from every pair of objects, $X$ and $Y$, are the
  morphisms $π₁ ፡ X ⊗ Y → X$ and $π₂ ፡ X ⊗ Y → Y$ defined by the equations, $π₁ =
  (\id{X} ⊗ ε_Y)$ and $π₂ = (ε_X ⊗ \id{Y})$. The \intro{marginals} of a two-output
  morphism, $f ፡ X → Y ⊗ Z$, are the two morphisms resulting from postcomposition
  with the two projections, $f ⨾ π₁ ፡ X → Y$ and $f ⨾ π₂ ፡ X → Z$.
\end{defi}

Thus, postcomposition with the (second) projection, $\pi = (- ⨾ π₂)$, computes the
(second) \kl{marginal} of a morphism in a \copyDiscardCategory{}. This operation has
a section that brings a morphism $f ፡ X → Y$ into a morphism $φ(f) ፡ X → X ⊗ Y$
and is defined by precomposition with the \emph{copy} morphism, $φ(f) = ν_X ⨾
(\id{X} ⊗ f)$. Indeed, we may check that $\pi(φ(f)) = f$, while, in general,
$φ(\pi(f)) ≠ f$. Still, this section-retraction pair induces a new operation.

\begin{figure}[h!]

\tikzset{every picture/.style={line width=0.75pt}} %

\begin{tikzpicture}[x=0.75pt,y=0.75pt,yscale=-1,xscale=1]
\draw   (130,50) -- (170,50) -- (170,65) -- (130,65) -- cycle ;
\draw    (150,50) -- (150,40) ;
\draw    (235,25) -- (235,15) ;
\draw    (250,100) -- (250,95) ;
\draw  [draw opacity=0] (180,40) -- (210,40) -- (210,65) -- (180,65) -- cycle ;
\draw  [fill={rgb, 255:red, 255; green, 255; blue, 255 }  ,fill opacity=1 ] (260,35) -- (240,35) -- (240,50) -- (260,50) -- cycle ;
\draw  [fill={rgb, 255:red, 255; green, 255; blue, 255 }  ,fill opacity=1 ] (260,80) -- (240,80) -- (240,95) -- (260,95) -- cycle ;
\draw    (235,25) .. controls (225.17,29.43) and (220,24.47) .. (220,35) ;
\draw  [fill={rgb, 255:red, 0; green, 0; blue, 0 }  ,fill opacity=1 ] (232,25) .. controls (232,23.34) and (233.34,22) .. (235,22) .. controls (236.66,22) and (238,23.34) .. (238,25) .. controls (238,26.66) and (236.66,28) .. (235,28) .. controls (233.34,28) and (232,26.66) .. (232,25) -- cycle ;
\draw    (235,25) .. controls (244.87,29.47) and (250,24.72) .. (250,35) ;
\draw    (250,60) -- (250,50) ;
\draw    (250,60) .. controls (240.17,64.43) and (235,59.47) .. (235,70) ;
\draw  [fill={rgb, 255:red, 0; green, 0; blue, 0 }  ,fill opacity=1 ] (247,60) .. controls (247,58.34) and (248.34,57) .. (250,57) .. controls (251.66,57) and (253,58.34) .. (253,60) .. controls (253,61.66) and (251.66,63) .. (250,63) .. controls (248.34,63) and (247,61.66) .. (247,60) -- cycle ;
\draw    (250,60) .. controls (259.88,64.47) and (265.49,60.22) .. (265,70) ;
\draw    (220,60) .. controls (219.75,69.22) and (245,69.97) .. (245,80) ;
\draw    (220,45) -- (220,35) ;
\draw    (220,60) -- (220,45) ;
\draw    (235,70) .. controls (234.75,79.22) and (220,74.97) .. (220,85) ;
\draw    (265,70) .. controls (265.83,75.56) and (254.71,74.03) .. (255,80) ;
\draw    (220,85) -- (220,100) ;
\draw    (140,75) -- (140,65) ;
\draw    (160,75) -- (160,65) ;

\draw (195,52.5) node    {$=$};
\draw (150,57.5) node  [font=\footnotesize]  {$f\lhd g$};
\draw (250,42.5) node  [font=\footnotesize]  {$f$};
\draw (150,36.6) node [anchor=south] [inner sep=0.75pt]  [font=\tiny]  {$X$};
\draw (235,11.6) node [anchor=south] [inner sep=0.75pt]  [font=\tiny]  {$X$};
\draw (140,78.4) node [anchor=north] [inner sep=0.75pt]  [font=\tiny]  {$Y$};
\draw (160,78.4) node [anchor=north] [inner sep=0.75pt]  [font=\tiny]  {$Z$};
\draw (248.5,87.5) node  [font=\footnotesize]  {$g$};
\draw (220,103.4) node [anchor=north] [inner sep=0.75pt]  [font=\tiny]  {$Y$};
\draw (250,103.4) node [anchor=north] [inner sep=0.75pt]  [font=\tiny]  {$Z$};

\end{tikzpicture}
   \caption{\protect\MarginalComposition{} of $f$ and $g$.}%
  \label{diagram-conditionals}%
\end{figure}

\begin{defi}[Conditional composition]%
  \label{def:conditionalCopmosition}%
  \label{def:conditionalExtension}%
  \label{prop:conditionalExtensionAssoc}%
  \defining{linkMarginalComposition}%
  In a \copyDiscardCategory{}, the \intro{conditional composition} of a morphism
  $f ፡ X → Y$ with a morphism $g ፡ X ⊗ Y → Z$ is the morphism $(f ⊲ g) ፡ X → Y ⊗
  Z$ defined by the diagram in \Cref{diagram-conditionals} or, equivalently, by
  the formula
  $$
  f ⊲ g = π(φ(f) ⨾ φ(g)) .
  $$
\end{defi}

\begin{rem}[Type annotations]
  Like composition, \kl{conditional composition} ($\condcomp$) determines a
  family of functions \((\condcomp_{X,Y,Z}) ፡ ℂ(X,Y) \times ℂ(X \tensor Y,Z) \to
  ℂ(X,Y \tensor Z)\), where \(ℂ(X,Y)\) denotes the set of morphisms \(X \to Y\)
  in a \copyDiscardCategory{} \(ℂ\). Projections and copy-precomposition, $\pi$
  and $φ$, are also families of functions \(φ_{X,Y} \colon ℂ(X,Y) \to ℂ(X,X
  \tensor Y)\) and \(\pi_{X,Y} \colon ℂ(X,X \tensor Y) \to ℂ(X,Y)\). If we
  explicitly write all type annotations, the definition of conditional
  composition $(\condcomp_{X,Y,Z})$ becomes
  \[f\, ⊲_{X,Y,Z}\, g = π_{X,(Y ⊗ Z)}\,(φ_{X,Y}(f) ⨾ φ_{(X ⊗ Y), Z}(g)).\]
  Because annotating all types is cumbersome, and because they can be inferred
  from their arguments, we omit them.
\end{rem}

\begin{prop}
  \kl{Conditional composition} is associative and unital.
\end{prop}
\begin{proof}
  Let us show that \kl{conditional composition} is unital: since $φ(ε_X) =
  \id{X}$, it follows that $f ⊲ ε_{X ⊗ Y} = f$ and $ε_X ⊲ f = f$.
  Let us show that \kl{conditional composition} is also associative: by associativity of the comonoid,
  we obtain that $φ(f ⊲ g) = φ(f) ⨾ φ(g)$. Using this fact, the definition of
  \((\condcomp)\) and that \(\phi\) is a left inverse to \(\pi\), we may derive
  associativity:
  \[%
    (f ⊲ g) ⊲ h =
    π(φ((f ⊲ g) ⊲ h)) =
    π(φ(f) ⨾ φ(g) ⨾ φ(h)) =
    πφ(f ⊲ (g ⊲ h)) =
    f ⊲ (g ⊲ h). \qedhere
  \]%
\end{proof}

\AP In probability theory, we assume that every morphism can be factored through any of its
\marginals{}: for each $f ፡ X → Y ⊗ Z$, there exists some $c ፡ X ⊗ Y → Z$—its
\intro{conditional}—such that $f = (f ⨾ π₁) ⊲ c$. This assumption defines
\MarkovCategories{} and \partialMarkovCategories{}, which we introduce next.

\subsection{Markov Categories}
\label{subsec:markov-categories}

\MarkovCategories{} are a specialized variant of \copyDiscardCategories{} for
probability theory. Synthetic probability theory needs—apart from the axioms of
\copyDiscardCategories{}—two extra principles. The first is that every morphism
is \total{}: a probabilistic choice that does not affect any outcome could as
well not have happened. The second is that every joint morphism can be factored
through its \marginal{}: we assume the existence of a morphism such that, when
composed with the \marginal{} of a morphism, yields the original
morphism\footnote{Markov categories~\cite{fritz_2020} appear in the literature
without the requirement of having \conditionals{}—they are defined as
\copyDiscardCategories{} of total maps. In this text, we call \emph{Markov
categories} only to those with \conditionals{}: the \MarkovCategories{} better
suited for synthetic probability all have \conditionals{}. Our change of
convention makes the parallel with cartesian categories more explicit: Markov
categories are cartesian categories with a weaker splitting condition, instead
of any category with copy and uniform discard maps.}. In other words, synthetic
probability theory assumes the existence of
\emph{\conditionals{}}~\cite{cho_2019,fritz_2020}.

\begin{defi}[Conditionals]
  \label{def:conditional}%
  \defining{linkConditional}%
  A \copyDiscardCategory{} has \intro{conditionals} if each morphism \m{f ፡
  X → Y ⊗ Z} can be factored through its \marginal{}:
  $$f = (f ⨾ π₁) ⊲ c$$%
  for some $c ፡ X ⊗ Y → Z$. In other words, for all \(f \colon X \to Y \tensor Z\), there exists some morphism \(c \colon X \tensor Y \to Z\) that satisfies the equation in
  \Cref{fig:conditional-definition}.
  In this situation, we say that $c ፡ X ⊗ Y → Z$ is a
  \emph{conditional} of $f$ with respect to $Y$.
\end{defi}
\begin{figure}[h!]

\tikzset{every picture/.style={line width=0.75pt}} %

\begin{tikzpicture}[x=0.75pt,y=0.75pt,yscale=-1,xscale=1]
\draw    (235,20) .. controls (244.88,24.47) and (250,19.72) .. (250,30) ;
\draw   (140,50) -- (160,50) -- (160,65) -- (140,65) -- cycle ;
\draw    (150,50) -- (150,40) ;
\draw    (235,20) -- (235,10) ;
\draw    (250,100) -- (250,95) ;
\draw  [draw opacity=0] (180,40) -- (210,40) -- (210,65) -- (180,65) -- cycle ;
\draw  [fill={rgb, 255:red, 255; green, 255; blue, 255 }  ,fill opacity=1 ] (260,30) -- (240,30) -- (240,45) -- (260,45) -- cycle ;
\draw  [fill={rgb, 255:red, 255; green, 255; blue, 255 }  ,fill opacity=1 ] (260,80) -- (240,80) -- (240,95) -- (260,95) -- cycle ;
\draw    (235,20) .. controls (225.17,24.43) and (220,19.47) .. (220,30) ;
\draw  [fill={rgb, 255:red, 0; green, 0; blue, 0 }  ,fill opacity=1 ] (232,20) .. controls (232,18.34) and (233.34,17) .. (235,17) .. controls (236.66,17) and (238,18.34) .. (238,20) .. controls (238,21.66) and (236.66,23) .. (235,23) .. controls (233.34,23) and (232,21.66) .. (232,20) -- cycle ;
\draw    (245,60) -- (245,45) ;
\draw    (245,60) .. controls (235.17,64.43) and (230,59.47) .. (230,70) ;
\draw  [fill={rgb, 255:red, 0; green, 0; blue, 0 }  ,fill opacity=1 ] (242,60) .. controls (242,58.34) and (243.34,57) .. (245,57) .. controls (246.66,57) and (248,58.34) .. (248,60) .. controls (248,61.66) and (246.66,63) .. (245,63) .. controls (243.34,63) and (242,61.66) .. (242,60) -- cycle ;
\draw    (245,60) .. controls (254.88,64.47) and (260,61.93) .. (260,70) ;
\draw    (220,65) .. controls (219.75,74.22) and (245,69.97) .. (245,80) ;
\draw    (220,45) -- (220,30) ;
\draw    (220,65) -- (220,45) ;
\draw    (230,70) .. controls (229.75,79.22) and (220,74.97) .. (220,85) ;
\draw    (260,70) .. controls (259.75,79.22) and (254.71,74.03) .. (255,80) ;
\draw    (220,85) -- (220,100) ;
\draw    (145,75) -- (145,65) ;
\draw    (155,75) -- (155,65) ;
\draw    (255,55) -- (255,45) ;
\draw  [fill={rgb, 255:red, 0; green, 0; blue, 0 }  ,fill opacity=1 ] (252,55) .. controls (252,53.34) and (253.34,52) .. (255,52) .. controls (256.66,52) and (258,53.34) .. (258,55) .. controls (258,56.66) and (256.66,58) .. (255,58) .. controls (253.34,58) and (252,56.66) .. (252,55) -- cycle ;

\draw (195,52.5) node    {$=$};
\draw (150,57.5) node  [font=\footnotesize]  {$f$};
\draw (250,37.5) node  [font=\footnotesize]  {$f$};
\draw (150,36.6) node [anchor=south] [inner sep=0.75pt]  [font=\tiny]  {$X$};
\draw (235,6.6) node [anchor=south] [inner sep=0.75pt]  [font=\tiny]  {$X$};
\draw (145,78.4) node [anchor=north] [inner sep=0.75pt]  [font=\tiny]  {$Y$};
\draw (155,78.4) node [anchor=north] [inner sep=0.75pt]  [font=\tiny]  {$Z$};
\draw (248.5,87.5) node  [font=\footnotesize]  {$c$};
\draw (220,103.4) node [anchor=north] [inner sep=0.75pt]  [font=\tiny]  {$Y$};
\draw (250,103.4) node [anchor=north] [inner sep=0.75pt]  [font=\tiny]  {$Z$};

\end{tikzpicture}
   \caption{Definition of \protect\conditional{}.}
  \label{fig:conditional-definition}
\end{figure}

\begin{defi}[Markov category]
\label{def:markovCategory}\defining{linkMarkovCategory}{}%
  \AP A \intro{Markov category} is a \copyDiscardCategory{} with \conditionals{}
  where all morphisms are \total{}.
\end{defi}

In general, \conditionals{} are not unique: unique \conditionals{} only exist in
posetal categories (\cite[Proposition 11.15]{fritz2019probability}). However,
\conditionals{} are ``\emph{\almostSurely{}} unique'' with respect to the
\marginal{}. Indeed, many notions in \MarkovCategories{} are better behaved up
to synthetic \almostSureEquality{} \cite[Section
13]{fritz2019probability}.\footnote{There exists a more general notion of
\almostSureEquality{} \cite[Definition 2.1.1]{lorenzin23:absoluteContinuity} and
\conditionalComposition{} \cite[Remark A.14]{monoidalstreams}; for simplicity,
we restrict it to the case that concerns us in this work.}

\begin{defi}[Almost-sure equality]
  \defining{linkAlmostSureEquality}%
  \label{def:almostSureEquality}%
  Two morphisms, $g₁,g₂ ፡ A ⊗ X → B$, are \almostSurelyEqual{} with respect to $f ፡ X → A$ (or, $f$-\almostSurelyEqual{}) whenever $f ⊲ g₁ = f ⊲ g₂$. In this case, we write \(g_{1} =_{f} g_{2}\). This determines an equivalence relation.
\end{defi}

In a certain sense, \conditionals{} generalize how \deterministic{} morphisms split in terms of their projections, $h = ν ⨾ ((h ⨾ π₁) ⊗ (h ⨾ π₂))$. We will denote the \linkdef{linkcond}\conditional{} of a morphism $f ፡ X → Y ⊗ Z$ by $\cond(f) ፡ X ⊗ Y → Z$. Still, note that it is defined merely up to $(f ⨾ π₁)$-\almostSureEquality{}.

\begin{prop}[Conditional of a deterministic morphism]%
  \label{prop:conditionalDeterministic}%
  Let $h ፡ X → Y ⊗ Z$ be a \deterministic{} morphism. Its \conditional{}, $\cond(h)
  ፡ X ⊗ Y → Z$, can be constructed from composing with projections:
  $$\cond(h) = π₁ ⨾ h ⨾ π₂, \mbox{ when } h \mbox{ is deterministic.}$$
\end{prop}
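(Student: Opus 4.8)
The plan is to exhibit the proposed morphism as a \conditional{} by verifying the defining equation of \Cref{def:conditional} directly. Set $c := π₁ ⨾ h ⨾ π₂ ፡ X ⊗ Y → Z$. Since \conditionals{} are only required to exist (and $\cond(h)$ is pinned down merely up to almost-sure equality), it is enough to check the single equation $h = (h ⨾ π₁) ⊲ c$. First I would record that $c$ \emph{ignores its conditioning wire}: the projection $π₁ ፡ X ⊗ Y → X$ is $\id{X} ⊗ ε_Y$, so $c = π₁ ⨾ (h ⨾ π₂)$, that is, $c$ discards $Y$ and then applies the second \marginal{} $n := h ⨾ π₂ ፡ X → Z$ (using the projection $π₂ ፡ Y ⊗ Z → Z$).

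The key structural step is a lemma that holds in \emph{any} \copyDiscardCategory{}, with no appeal to determinism: when the second argument of a \conditionalComposition{} discards its conditioning wire, the composite collapses to a parallel pairing,
\[ m ⊲ (π₁ ⨾ n) = ν_X ⨾ (m ⊗ n) \qquad\text{for all } m ፡ X → Y, \ n ፡ X → Z. \]
Unfolding the definition as $m ⊲ (π₁ ⨾ n) = φ(m) ⨾ φ(π₁ ⨾ n) ⨾ π₂$, the $Y$-copy produced by the inner $φ$ is routed into the $π₁$ of $π₁ ⨾ n$ and thereby discarded, while the context $X$ retained by the two $φ$'s is discarded by the outer projection; two applications of the counit law then erase both redundant copies, leaving one copy of $X$ feeding $m$ in parallel with one feeding $n$. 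I would make this precise with a short string-diagram rewrite. Applying the lemma with $m = h ⨾ π₁$ and $n = h ⨾ π₂$ gives
\[ (h ⨾ π₁) ⊲ c = ν_X ⨾ ((h ⨾ π₁) ⊗ (h ⨾ π₂)). \]

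Finally I would invoke the deterministic splitting noted immediately before the statement, $h = ν_X ⨾ ((h ⨾ π₁) ⊗ (h ⨾ π₂))$: by functoriality of $⊗$ and the determinism equation $ν_X ⨾ (h ⊗ h) = h ⨾ ν_{Y ⊗ Z}$, its right-hand side equals $h ⨾ (ν_{Y ⊗ Z} ⨾ (π₁ ⊗ π₂))$, and the comonoid identity $ν_{Y ⊗ Z} ⨾ (π₁ ⊗ π₂) = \id{Y ⊗ Z}$—copying $Y ⊗ Z$ and then keeping complementary \marginals{}—collapses this to $h$. Chaining the last two displays yields $h = (h ⨾ π₁) ⊲ c$, so $c$ is a \conditional{} of $h$ with respect to $Y$, giving $\cond(h) = π₁ ⨾ h ⨾ π₂$. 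The only delicate points are the two pieces of comonoid bookkeeping—the double counit collapse in the lemma and the complementary-marginal identity $ν_{Y ⊗ Z} ⨾ (π₁ ⊗ π₂) = \id{Y ⊗ Z}$—both routine string-diagram manipulations; the hypothesis that $h$ is \deterministic{} is used at exactly one place, namely pulling the copy map $ν$ through $h$ in the last paragraph.
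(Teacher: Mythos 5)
Your proof is correct. The paper states this proposition without a proof, and your argument is exactly the one its surrounding text gestures at: the counit-law collapse of a conditional composition whose second argument ignores its conditioning wire, combined with the deterministic splitting $h = ν ⨾ ((h ⨾ π₁) ⊗ (h ⨾ π₂))$ recorded immediately before the statement.
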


The proposition above shows that cartesian categories are particular
\MarkovCategories{} where all morphisms are \kl{deterministic}. These are
degenerate examples because any nondeterministic behaviour is ruled out by
naturality of the copy morphisms. We now turn to examples that do feature some
kind of nondeterminism.

\subsection{Some Markov categories}

This section recalls examples of \MarkovCategories{}: we later build partial versions of them. Intuitively, morphisms in \MarkovCategories{} encode processes with nondeterministic behaviour, but this nondeterminism can be either probabilistic or possibilistic. For instance, in the categories \(\Stoch\), \(\BorelStoch\), and \(\Gauss\), morphisms represent certain kinds of conditional distributions (\Cref{def:finitary-distributions,def:borelstoch,def:gauss}), while in the category \(\TRel\) morphisms are total relations between sets (\Cref{def:total-relations}). We refer the reader to Fritz's seminal work~\cite{fritz_2020} for more examples and detailed constructions.

\begin{exa}[Finitely supported distributions]%
  \label{def:finitary-distributions}%
  Sets and stochastic channels between them form a \kl{copy-discard category},
  the category $\Stoch{}$. A stochastic channel, \m{f ፡ X → Y}, between two
  sets, \(X\) and \(Y\), is a function \m{f ፡ X × Y → \left[0,1\right]} such
  that, for each \(x ∈ X\), the set \(\{y \in Y : f(x,y) \neq 0\}\) is finite
  and its values add up exactly to the unit, \(\sum_{y \in Y} f(x,y) = 1\). 

  Stochastic channels represent conditional probabilities: \(f(x,y)\) encodes
  the probability of \(y\) given \(x\) according to the channel \(f\), and
  \defining{linkGiven}we usually write \(f(y \given x) = f(x,y)\). In this
  notation, composition and tensors are written as follows.
  \begin{align*}
    (f \dcomp g) (z \given x) &= {\textstyle \sum_{y \in Y} f(y \given x) \cdot g(z \given y)}, \\
    (f ⊗ f') (y,y' \given x,x') &= f(y \given x) \cdot f'(y' \given x').
  \end{align*}

  $\Stoch$ is a \copyDiscardCategory{} where all morphisms are total
  (see~\cite[Example~2.4]{cho_2019} or~\cite[Example~2.5]{fritz_2020}): in fact,
  $\Stoch$ is the Kleisli category of the finitely supported distribution monad
  \(\distr \colon \Set \to \Set\) on the category \(\Set\) of sets and
  functions, whose unit is the \defining{linkDirac}Dirac distribution, \(\dirac{x}\). 
  The copy and discard morphisms are lifted from \(\Set\). 

  \(\Stoch\) is, moreover, a \MarkovCategory{} (see~\cite[Example~3.6]{cho_2019}
  or~\cite[Example~11.6]{fritz_2020}): a \conditional{} of a stochastic
  channel \(f \colon X \to Y \tensor Z\) is given by
  \[\cond(f)(z \given x,y) = \frac{f(y,z \given x)}{\sum_{z' \in Z} f(y,z' \given x)}\]
  whenever \(\sum_{z' \in Z} f(y,z' \given x) \neq 0\); otherwise, if \(\sum_{z'
  \in Z} f(y,z' \given x) =0\), they may be given by \(\cond(f)(z \given x,y) = \sigma(z)\)
  for any arbitrary distribution \(\sigma \in \distr(Z)\). Again, note
  that \conditionals{} are not uniquely defined: they are only uniquely
  determined for pairs \((x,y)\) occurring with non-zero probability, $\sum_{z
  \in Z} f(y, z \given x) ≠ 0$.
\end{exa}

\begin{rem}[Reading string diagrams in Stoch]
  \label{rem:readingStrings}%
  \(\Stoch\) allows an intuitive reading of string diagrams with copy and
  discard: a morphism represented by a string diagram evaluates as the
  multiplication of all its components summed over all its wires that are not
  inputs nor outputs (cf.\ \cite[Notation~2.8]{fritz_2020}).
  
  \begin{figure}[h!]

\tikzset{every picture/.style={line width=0.75pt}} %

\begin{tikzpicture}[x=0.75pt,y=0.75pt,yscale=-1,xscale=1]
\draw   (140,45) -- (160,45) -- (160,60) -- (140,60) -- cycle ;
\draw    (150,45) -- (150,20) ;
\draw  [draw opacity=0] (175,45) -- (205,45) -- (205,70) -- (175,70) -- cycle ;
\draw  [draw opacity=0] (275,45) -- (305,45) -- (305,70) -- (275,70) -- cycle ;
\draw    (140,90) -- (140,85) ;
\draw    (160,90) -- (160,85) ;
\draw   (230,30.5) -- (250,30.5) -- (250,45.5) -- (230,45.5) -- cycle ;
\draw    (240,30) -- (240,20) ;
\draw    (240,55) -- (240,45) ;
\draw    (240,55) .. controls (230.17,59.43) and (225,59.47) .. (225,70) ;
\draw  [fill={rgb, 255:red, 0; green, 0; blue, 0 }  ,fill opacity=1 ] (237,55) .. controls (237,53.34) and (238.34,52) .. (240,52) .. controls (241.66,52) and (243,53.34) .. (243,55) .. controls (243,56.66) and (241.66,58) .. (240,58) .. controls (238.34,58) and (237,56.66) .. (237,55) -- cycle ;
\draw    (240,55) .. controls (249.88,59.47) and (255,59.72) .. (255,70) ;
\draw  [fill={rgb, 255:red, 255; green, 255; blue, 255 }  ,fill opacity=1 ] (215,65) -- (235,65) -- (235,80) -- (215,80) -- cycle ;
\draw  [fill={rgb, 255:red, 255; green, 255; blue, 255 }  ,fill opacity=1 ] (245,65) -- (265,65) -- (265,80) -- (245,80) -- cycle ;
\draw    (225,90) -- (225,80) ;
\draw    (255,90) -- (255,80) ;
\draw    (155,60) .. controls (155.13,70.22) and (160.13,75.47) .. (160,85) ;
\draw    (145,60) .. controls (145.13,70.22) and (140.13,75.47) .. (140,85) ;

\draw (190,57.5) node    {$=$};
\draw (150,52.5) node  [font=\footnotesize]  {$f$};
\draw (290,57.5) node    {$;$};
\draw (150,16.6) node [anchor=south] [inner sep=0.75pt]  [font=\tiny]  {$X$};
\draw (140,93.4) node [anchor=north] [inner sep=0.75pt]  [font=\tiny]  {$Z_{1}$};
\draw (160,93.4) node [anchor=north] [inner sep=0.75pt]  [font=\tiny]  {$Z_{2}$};
\draw (240,38) node  [font=\footnotesize]  {$g$};
\draw (240,16.6) node [anchor=south] [inner sep=0.75pt]  [font=\tiny]  {$X$};
\draw (225,93.4) node [anchor=north] [inner sep=0.75pt]  [font=\tiny]  {$Z_{1}$};
\draw (225,72.5) node  [font=\footnotesize]  {$h_{1}$};
\draw (255,72.5) node  [font=\footnotesize]  {$h_{2}$};
\draw (255,93.4) node [anchor=north] [inner sep=0.75pt]  [font=\tiny]  {$Z_{2}$};
\draw (258.5,56.6) node [anchor=south] [inner sep=0.75pt]  [font=\tiny]  {$Y$};

\end{tikzpicture}
    \caption{Example of a string diagrammatic equation.}%
    \label{diagram-example-reading}%
  \end{figure}
  \noindent
  For example, \Cref{diagram-example-reading} evaluates to
  \(
    f(z₁, z₂ \given x) = \sum_{y ∈ Y}
      g(y \given x) ⋅ h_{1}(z_{1} \given y) ⋅ h_{2}(z_{2} \given y).
  \)
\end{rem}

\begin{exa}[Distributions on standard Borel spaces]\label{def:borelstoch}
  Stochastic measurable maps between measurable spaces form a
  \copyDiscardCategory{}; however, this category does not have \conditionals{}
  (see~\cite[Example~2.5]{cho_2019}, \cite[Section~4]{fritz_2020},
  \cite{panangaden09:labelledMarkov}). Instead, its full subcategory
  on standard Borel spaces, \(\BorelStoch\), is a \MarkovCategory{}
  (see~\cite[Theorem~3.11]{cho_2019}, \cite[Example~11.3]{fritz_2020}).

  \defining{linkborelstoch}%
  \(\BorelStoch\) has standard Borel spaces—\((X,Σ_{X})\) for \(X\) a set and
  \(Σ_{X}\) a \(σ\)-algebra over it—as objects and Markov kernels as morphisms.
  A Markov kernel \(f ፡ (X, Σ_{X}) → (Y, Σ_{Y})\) is a function \(f ፡ Σ_{Y} × X
  → {[0,1]}\) such that, for each measurable subset of the codomain, \(T ∈
  Σ_{Y}\), its probability given an input, \(f(T \given -) ፡ X → {[0,1]}\)
  defines a measurable function with respect to the Borel \(\sigma\)-algebra on
  \([0,1]\), and such that, for each input \(x ∈ X\), the function \(f(- \given
  x) ፡ Σ_{Y} → {[0,1]}\) is a probability measure.

  Composition of two morphisms \(f ፡ (X, Σ_{X}) → (Y, Σ_{Y})\) and \(g ፡ (Y,
  Σ_{Y}) → (Z, Σ_{Z})\) is given by Lebesgue integration. Identities
  \(\id{(X,\Sigma_{X})}\) are given by Dirac measures, as below.
  \begin{align*} 
    (f \dcomp g) (T \given x) = ∫_{y ∈ Y} g(T \given y) ⋅ f(dy \given x);
    \qquad
    \id{(X,\Sigma_{X})} (T \given x) =
      \begin{cases}
        1 & \text{ if } x ∈ T, \\
        0 & \text{ otherwise. }
      \end{cases}
  \end{align*}
  
  \(\BorelStoch\) is monoidal. The tensor of two standard Borel spaces, \((X,
  Σ_{X})\) and \((Y, Σ_{Y})\), is the cartesian product of the
  underlying sets, \(X × Y\), with the \(\sigma\)-algebra generated by the
  squares, \(Σ_X ⊗ Σ_Y = \langle S₁ × S₂ \given S₁ ∈ Σ_X, S₂ ∈ Σ_Y \rangle\).
  Morphisms are tensored by integration, 
  \[%
  (f ⊗ f')(T \given x,x') = \int_{(y,y') ∈ T} f(dy \given x) ⋅ f'(dy' \given x').
  \]

  \(\BorelStoch\) is also the Kleisli category of the \defining{linkgiry}{Giry
  monad}, \(\Giry \colon \BMeas \to \BMeas\), on the
  \defining{linkBmeas}cartesian category \(\BMeas\) of standard Borel spaces and
  measurable functions between them~\cite{giry82:categorical}. As in the
  finitely supported case, the monoidal and \kl{copy-discard} structure of
  \(\BorelStoch\) is lifted from \(\BMeas\).
\end{exa}

\begin{exa}[Gaussian distributions]\label{def:gauss}%
  Let us exemplify how not all \kl{Markov categories} are Kleisli categories.
  Gaussian distributions are closed under sequential
  and parallel compositions, and their category has a convenient
  presentation~\cite[Section~6]{fritz_2020}: objects are natural numbers, where
  each \(n \in \naturals\) represents the measurable space \(\reals^{n}\) of
  \(n\)-dimensional real vectors with the Borel \(\sigma\)-algebra; morphisms
  \(n \to m\) are specified by a triple \((M,C,s)\) of
  real valued matrices \(M \in \reals^{m} \times \reals^{n}\), \(C \in
  \reals^{m} \times \reals^{m}\) and \(s \in \reals^{m}\), with \(C\) positive
  semidefinite.

  Morphisms are then interpreted as Gaussian conditional distributions of a
  random variable \(Y \in \reals^{m}\) in terms of a random variable \(X \in
  \reals^{n}\) as \(Y = MX + Z\), where \(Z \sim \mathcal{N}(s,C)\) is a
  Gaussian random variable of mean \(s\) and covariance \(C\); they compose
  as
  \begin{align*}
    (M,C,s) \dcomp (N,D,t) &= (NM,\, NC\transpose{N}\!+D,\, Ns + t), \\ 
    (M,C,s) \tensor (M',C',s') &= (M \oplus M', C \oplus C', s \oplus s'),
  \end{align*}
   where \(\transpose{(-)}\) denotes the matrix transposition and
  \((\oplus)\) denotes the biproduct of matrices.
  
  \Conditionals{} can be obtained in terms of familiar matrix
  operations~\cite[Example~11.8]{fritz_2020}: because any morphism \(f \colon n
  \to m + l\) can be written in block form, \(f = \left(\begin{psmallmatrix} M
  \\ N \end{psmallmatrix},
  \begin{psmallmatrix} C & U \\ V & D \end{psmallmatrix},
  \begin{psmallmatrix} s \\ t \end{psmallmatrix} \right)\), the \conditional{} of
  \(f\) can be defined as  
  \[\cond(f) = \left( (N - VC^{-}\!M \mid VC^{-} ),\, D -
  VC^{-}\!U,\, t - VC^{-}\!s \right),\] 
  where \(C^{-}\) denotes the Moore-Penrose pseudoinverse of \(C\), and $(N -
  VC^{-}\!M \mid VC^{-} )$ denotes the block matrix constructed by juxtaposing
  $N - VC^{-}\!M$ and $VC^{-}$ \cite{samuelson23,samuelson24}.
\end{exa}

\begin{exa}[Total relations]%
  \label{def:total-relations}%
  Let us finally exemplify how \MarkovCategories{} do not rule out relational
  nondeterminism: the category of sets and total (or non-empty) relations,
  $\TRel$, is a \MarkovCategory{}.

  For two sets, \(X\) and \(Y\), a total relation \(f \colon X \to Y\) is a
  function that maps each element \(x \in X\) to a nonempty subset \(f(x)
  \subseteq Y\) of the codomain. The subset \(f(x)\) represents the set of
  possible outcomes of the process \(f\) without giving any information about
  how likely each outcome is. Composition is given by relational composition and
  the tensor is the point-wise cartesian product.
  \begin{align*}
    f \dcomp g(x) &= \{z ∈ Z : \exists y ∈ Y \ y ∈ f(x) \land z ∈ g(y)\}, \\
    f \tensor f'(x,x') &= f(x) × f'(x').
  \end{align*}
  
  The conditional of a a total relation $f ፡ X → Y × Z$ is the morphism
  $\cond(f) ፡ X × Y → Z$ defined by $\cond(f)(x,y) = \{z \in Z : (y,z) \in
  f(x)\}$ whenever it is non-empty, and by \(\cond(f)(x,y) = T\) for any subset
  \(T \subseteq Z\) otherwise.

  \(\TRel\) is the Kleisli category of the nonempty powerset monad \(\nePowerset
  \colon \Set \to \Set\) on the category \(\Set\) of sets and functions.
\end{exa}

\subsection{Subdistributions}%
\label{ex:non-example-markov}%

Still, non-normalized probability theory is not captured by \MarkovCategories{}:
let us present the main counterexample. A \emph{subdistribution} over \(X\) is a
distribution whose total probability mass is allowed to be less than
\(1\)~\cite{jacobs2018probability,cho_2019}. In other words, it is a
distribution over \(\maybe[X]\), where we may interpret the probability of
\(\bot\), the only element of \(1\), as the probability of ``failure''. 

\begin{defi}[Category of subdistributions]
  \defining{linkSubStoch} 
  The category of subdistributions, \(\subStoch\), has sets as objects; morphisms, $f ፡ X → Y$, are given by functions \(X × Y → \left[ 0,1 \right]\) such that, for all elements \(x ∈ X\), their support, \(\{y ∈ Y : f(x,y) ≠ 0\}\), is finite and their total probability mass adds up at most to the unit, \(\sum_{y ∈ Y} f(x,y) ≤ 1\). 
  
  Compositions and identities are defined in the same way as for \(\Stoch\) (\Cref{def:finitary-distributions}).
\end{defi}

\begin{rem}[Subdistributions as a Kleisli category]
  The category of \kl{subdistributions}, $\subStoch$, is a Kleisli category. It
  is both the Kleisli category of the Maybe monad on the category of
  distributions, \((\maybe) ፡ \Stoch → \Stoch\), and the Kleisli category of the
  \defining{linksubdistr}{finitary subdistribution monad}, \(\subdistr =
  \distr(\maybe)\). 
\end{rem}

\begin{prop}[{{\cite[Section 4]{jacobs2018probability}}}]
  There exists a distributive law, $\distributivelaw ፡ \maybe[\distr(-)] →
  \distr(\maybe)$, between the Maybe monad and the finitary distribution monad,
  defined by 
  \[\distributivelaw_{X}(σ)(x) = σ(x)\mbox{ with }
  \distributivelaw_{X}(σ)(⊥) = 0, \mbox{ and }
  \distributivelaw_{X}(\bot)(x) = 0 \mbox{ with }
  \distributivelaw_{X}(\bot)(⊥) = 1.\]
\end{prop}

\noindent This general construction ensures that the category of \kl{subdistributions},
\(\subStoch\), is a \copyDiscardCategory{}. Moreover, it has 
\conditionals{} given by
\[\cond(f)(z \given x,y) = \frac{f(y,z \given x)}{\sum_{z' \in Z} f(y,z' \given
x)}\] whenever \(\sum_{z' \in Z} f(y,z' \given x) \neq 0\), and by \(\cond(f)(z
\given x,y) = 0\) whenever \(\sum_{z' \in Z} f(y,z' \given x) =0\). 

\begin{rem}
The existence of \conditionals{} in the category of \kl{subdistributions} also follows from a general argument that will be the subject of \Cref{sec:KleisliMaybe}. Indeed, similar constructions can be applied to most of the previous examples: in \Cref{sec:examples-partial-markov}, we will obtain measurable substochastic channels, Gaussian substochastic channels, and different categories of relations. 
\end{rem}

All these examples are \copyDiscardCategories{} with \conditionals{}, but they are not \MarkovCategories{} because not all their morphisms are \total{}. We shall show that totality is not essential for modelling stochastic processes and that dropping this assumption allows us to pursue a synthetic theory of inferential update and observations. This is the main idea behind \emph{\partialMarkovCategories{}}.

\section{Partial Markov Categories}
\label{sec:PartialMarkovCategories}

Cartesian restriction categories~\cite{cockett2007restriction} extend cartesian categories encoding
\emph{partiality}: a map may not be defined on all its inputs and fail when
evaluated on inputs outside its domain of definition. %
We introduce \emph{\partialMarkovCategories{}} as a similar extension of
\MarkovCategories{} to encode partial stochastic processes, i.e., stochastic
processes that have a probability of failure on each one of their inputs.
Partiality is obtained by dropping naturality of the discard maps, i.e.\ by
allowing morphisms to be non-total. 

This extra generality will lead us later to \discretePartialMarkovCategories{}:
\partialMarkovCategories{} with the ability of comparing two outputs. These are
the analogue of \emph{discrete cartesian restriction
categories}~\cite{cockett2012range2,di_liberti_nester_2021}.
\begin{figure}[h!]
  \begin{center}
    \begin{tabular}{c|c}
      Cartesian category & \MarkovCategory{} \\ \hline
      Cartesian restriction category & \PartialMarkovCategory{} \\ \hline
      Discrete cartesian restriction category & \DiscretePartialMarkovCategory{} \\
    \end{tabular}
  \end{center}
\end{figure}

\begin{defi}[Partial Markov category]
  \defining{linkPartialMarkov}%
  \label{def:partialMarkov}%
  A \intro{partial Markov category} is a \copyDiscardCategory{} with
  \conditionals{}. 
\end{defi}

In a \MarkovCategory{}, all morphisms—and in particular all \conditionals{}—are
\total{}. 
In \partialMarkovCategories{}, we drop the totality assumption while still
obtaining marginals by discarding one of the outputs. %
Instead of being \total{}, \conditionals{} in \partialMarkovCategories{} are \almostSurely{} \total{}.

\begin{prop}%
  \Conditionals{} in \partialMarkovCategories{} are \almostSurely{} \total{}
  with respect to the \marginal{}.
\end{prop}
\begin{proof}
  Let us show that, given $f ፡ X → Y ⊗ Z$, its \conditional{}, $\cond(f) ፡ X ⊗ Y → Z$,
  is \((f ⨾ π₁)\)-\almostSurely{} \total{}, meaning that $(f ⨾ π₁) ⊲ (\cond(f) ⨾ ε) =
  (f ⨾ π₁) ⊲ ε$. By \emph{(i)} the definition of projection, \emph{(ii)} the definition
  of \conditional{}, and \emph{(iii)} counitality, we have that
  \[(f ⨾ π₁) ⊲ (\cond(f) ⨾ ε) \overset{(i)}{=} 
  ((f ⨾ π₁) ⊲ \cond(f)) ⨾ π₁ \overset{(ii)}{=} 
  f ⨾ π₁ \overset{(iii)}{=} 
  (f ⨾ π₁) ⊲ ε.\]

  \begin{figure}[h!]

\tikzset{every picture/.style={line width=0.75pt}} %

\begin{tikzpicture}[x=0.75pt,y=0.75pt,yscale=-1,xscale=1]
\draw    (100,25) .. controls (109.88,29.47) and (115,24.72) .. (115,35) ;
\draw    (100,25) -- (100,15) ;
\draw    (115,110) -- (115,100) ;
\draw  [fill={rgb, 255:red, 255; green, 255; blue, 255 }  ,fill opacity=1 ] (125,35) -- (105,35) -- (105,50) -- (125,50) -- cycle ;
\draw  [fill={rgb, 255:red, 255; green, 255; blue, 255 }  ,fill opacity=1 ] (125,85) -- (105,85) -- (105,100) -- (125,100) -- cycle ;
\draw    (100,25) .. controls (90.17,29.43) and (85,24.47) .. (85,35) ;
\draw  [fill={rgb, 255:red, 0; green, 0; blue, 0 }  ,fill opacity=1 ] (97,25) .. controls (97,23.34) and (98.34,22) .. (100,22) .. controls (101.66,22) and (103,23.34) .. (103,25) .. controls (103,26.66) and (101.66,28) .. (100,28) .. controls (98.34,28) and (97,26.66) .. (97,25) -- cycle ;
\draw    (110,65) -- (110,50) ;
\draw    (110,65) .. controls (100.17,69.43) and (95,64.47) .. (95,75) ;
\draw  [fill={rgb, 255:red, 0; green, 0; blue, 0 }  ,fill opacity=1 ] (107,65) .. controls (107,63.34) and (108.34,62) .. (110,62) .. controls (111.66,62) and (113,63.34) .. (113,65) .. controls (113,66.66) and (111.66,68) .. (110,68) .. controls (108.34,68) and (107,66.66) .. (107,65) -- cycle ;
\draw    (110,65) .. controls (119.88,69.47) and (125,66.93) .. (125,75) ;
\draw    (85,70) .. controls (84.75,79.22) and (110,74.97) .. (110,85) ;
\draw    (85,50) -- (85,35) ;
\draw    (85,70) -- (85,50) ;
\draw    (95,75) .. controls (94.75,84.22) and (85,79.97) .. (85,90) ;
\draw    (125,75) .. controls (124.75,84.22) and (119.71,79.03) .. (120,85) ;
\draw    (85,90) -- (85,115) ;
\draw    (120,60) -- (120,50) ;
\draw  [fill={rgb, 255:red, 0; green, 0; blue, 0 }  ,fill opacity=1 ] (117,60) .. controls (117,58.34) and (118.34,57) .. (120,57) .. controls (121.66,57) and (123,58.34) .. (123,60) .. controls (123,61.66) and (121.66,63) .. (120,63) .. controls (118.34,63) and (117,61.66) .. (117,60) -- cycle ;
\draw  [fill={rgb, 255:red, 0; green, 0; blue, 0 }  ,fill opacity=1 ] (112,110) .. controls (112,108.34) and (113.34,107) .. (115,107) .. controls (116.66,107) and (118,108.34) .. (118,110) .. controls (118,111.66) and (116.66,113) .. (115,113) .. controls (113.34,113) and (112,111.66) .. (112,110) -- cycle ;
\draw  [draw opacity=0] (190,46.5) -- (220,46.5) -- (220,71.5) -- (190,71.5) -- cycle ;
\draw  [fill={rgb, 255:red, 255; green, 255; blue, 255 }  ,fill opacity=1 ] (165,45) -- (185,45) -- (185,60) -- (165,60) -- cycle ;
\draw    (175,45) -- (175,25) ;
\draw    (170,110) -- (170,60) ;
\draw    (180,70) -- (180,60) ;
\draw  [fill={rgb, 255:red, 0; green, 0; blue, 0 }  ,fill opacity=1 ] (177,70) .. controls (177,68.34) and (178.34,67) .. (180,67) .. controls (181.66,67) and (183,68.34) .. (183,70) .. controls (183,71.66) and (181.66,73) .. (180,73) .. controls (178.34,73) and (177,71.66) .. (177,70) -- cycle ;
\draw  [draw opacity=0] (130,46.5) -- (160,46.5) -- (160,71.5) -- (130,71.5) -- cycle ;
\draw    (240,25) .. controls (249.88,29.47) and (255,24.72) .. (255,35) ;
\draw    (240,25) -- (240,15) ;
\draw    (245,90) -- (245,85) ;
\draw  [fill={rgb, 255:red, 255; green, 255; blue, 255 }  ,fill opacity=1 ] (265,35) -- (245,35) -- (245,50) -- (265,50) -- cycle ;
\draw    (240,25) .. controls (230.17,29.43) and (225,24.47) .. (225,35) ;
\draw  [fill={rgb, 255:red, 0; green, 0; blue, 0 }  ,fill opacity=1 ] (237,25) .. controls (237,23.34) and (238.34,22) .. (240,22) .. controls (241.66,22) and (243,23.34) .. (243,25) .. controls (243,26.66) and (241.66,28) .. (240,28) .. controls (238.34,28) and (237,26.66) .. (237,25) -- cycle ;
\draw    (250,65) -- (250,50) ;
\draw    (250,65) .. controls (240.17,69.43) and (235,64.47) .. (235,75) ;
\draw  [fill={rgb, 255:red, 0; green, 0; blue, 0 }  ,fill opacity=1 ] (247,65) .. controls (247,63.34) and (248.34,62) .. (250,62) .. controls (251.66,62) and (253,63.34) .. (253,65) .. controls (253,66.66) and (251.66,68) .. (250,68) .. controls (248.34,68) and (247,66.66) .. (247,65) -- cycle ;
\draw    (250,65) .. controls (259.88,69.47) and (265,66.93) .. (265,75) ;
\draw    (225,70) .. controls (224.75,79.22) and (245,74.97) .. (245,85) ;
\draw    (225,50) -- (225,35) ;
\draw    (225,70) -- (225,50) ;
\draw    (235,75) .. controls (234.75,84.22) and (225,79.97) .. (225,90) ;
\draw    (225,90) -- (225,115) ;
\draw    (260,60) -- (260,50) ;
\draw  [fill={rgb, 255:red, 0; green, 0; blue, 0 }  ,fill opacity=1 ] (257,60) .. controls (257,58.34) and (258.34,57) .. (260,57) .. controls (261.66,57) and (263,58.34) .. (263,60) .. controls (263,61.66) and (261.66,63) .. (260,63) .. controls (258.34,63) and (257,61.66) .. (257,60) -- cycle ;
\draw  [fill={rgb, 255:red, 0; green, 0; blue, 0 }  ,fill opacity=1 ] (242,90) .. controls (242,88.34) and (243.34,87) .. (245,87) .. controls (246.66,87) and (248,88.34) .. (248,90) .. controls (248,91.66) and (246.66,93) .. (245,93) .. controls (243.34,93) and (242,91.66) .. (242,90) -- cycle ;
\draw    (265,80) -- (265,75) ;
\draw  [fill={rgb, 255:red, 0; green, 0; blue, 0 }  ,fill opacity=1 ] (262,80) .. controls (262,78.34) and (263.34,77) .. (265,77) .. controls (266.66,77) and (268,78.34) .. (268,80) .. controls (268,81.66) and (266.66,83) .. (265,83) .. controls (263.34,83) and (262,81.66) .. (262,80) -- cycle ;

\draw (115,42.5) node  [font=\footnotesize]  {$f$};
\draw (100,11.6) node [anchor=south] [inner sep=0.75pt]  [font=\tiny]  {$X$};
\draw (113.5,92.5) node  [font=\footnotesize]  {$c$};
\draw (85,118.4) node [anchor=north] [inner sep=0.75pt]  [font=\tiny]  {$Y$};
\draw (175,52.5) node  [font=\footnotesize]  {$f$};
\draw (170,113.4) node [anchor=north] [inner sep=0.75pt]  [font=\tiny]  {$Y$};
\draw (175,21.6) node [anchor=south] [inner sep=0.75pt]  [font=\tiny]  {$X$};
\draw (145,59) node    {$\overset{\mathit{(i)}}{=}$};
\draw (205,59) node    {$\overset{\mathit{(ii)}}{=}$};
\draw (255,42.5) node  [font=\footnotesize]  {$f$};
\draw (240,11.6) node [anchor=south] [inner sep=0.75pt]  [font=\tiny]  {$X$};
\draw (225,118.4) node [anchor=north] [inner sep=0.75pt]  [font=\tiny]  {$Y$};

\end{tikzpicture}
     \caption{Conditionals are \protect\almostSurely{} \protect\total{}.}
    \label{fig:conditionals-almost-surely-total}
  \end{figure}
  Equivalently, the string diagram reasoning in
  \Cref{fig:conditionals-almost-surely-total} shows that the \conditional{} is
  \((f ⨾ π₂)\)-\almostSurely{} \total. We employ \emph{(i)} the definition of
  \conditional{}, and \emph{(ii)} counitality.
\end{proof}

\Conditionals{} can be computed compositionally: similar results appear in the work of Fritz~\cite[Lemma 11.11]{fritz_2020}, for \MarkovCategories{}, and Jacobs~\cite[Section 5.1]{jacobs_2019}, for \(\Stoch\).
We recast these in the setting of \partialMarkovCategories{}.

\subsection{Bayesian inversions}

The \emph{\BayesianInversion{}} of a stochastic channel \m{g ፡ X → Y} with
respect to a distribution \m{p} over \m{X} is the stochastic channel
\(\bayesinv{g}{p} ፡ Y → X\) defined by 
\mm{%
\bayesinv{g}{p}(x \given y) = \frac{g(y \given x) · p(x)}%
{\sum_{x_• ∈ X} g(y \given x_•) · p(x_•)},%
}%
for any \(y ∈ Y\) with positive probability, meaning that \(\sum_{x_{•} ∈
X} g(y \given x_{•}) · p(x_{•}) > 0\).

\BayesianInversions{} can be defined abstractly in \partialMarkovCategories{},
as they can be in \MarkovCategories{}~\cite[Proposition 11.17]{fritz_2020}:
\BayesianInversions{} are a particular case of \conditionals{}. We state this
result for \partialMarkovCategories{}
(\Cref{prop:bayes-inversions-from-conditionals}) as a straightforward
generalisation of that for \MarkovCategories{}~\cite[Proposition 11.17]{fritz_2020}.

\begin{defi}[Bayesian inversion]
  \label{def:bayes-inversion}%
  \defining{linkbayesinv}%
  A \emph{Bayesian inversion} of a morphism \m{g ፡ X → Y} with respect to \(p ፡
  I → X\) is a morphism \(\bayesinv{g}{p} ፡ Y → X\)
  satisfying the equation in \Cref{diagram-bayesian-inversion}. 
  \begin{figure}[h!]

\tikzset{every picture/.style={line width=0.75pt}} %

\begin{tikzpicture}[x=0.75pt,y=0.75pt,yscale=-1,xscale=1]
\draw   (140,15.5) -- (160,15.5) -- (160,30.5) -- (140,30.5) -- cycle ;
\draw    (135,70) -- (135,65) ;
\draw    (150,40) -- (150,30) ;
\draw    (150,40) .. controls (140.17,44.43) and (135,39.47) .. (135,50) ;
\draw  [fill={rgb, 255:red, 0; green, 0; blue, 0 }  ,fill opacity=1 ] (147,40) .. controls (147,38.34) and (148.34,37) .. (150,37) .. controls (151.66,37) and (153,38.34) .. (153,40) .. controls (153,41.66) and (151.66,43) .. (150,43) .. controls (148.34,43) and (147,41.66) .. (147,40) -- cycle ;
\draw    (150,40) .. controls (159.88,44.47) and (165,39.72) .. (165,50) ;
\draw   (125,50) -- (145,50) -- (145,65) -- (125,65) -- cycle ;
\draw    (165,70) -- (165,50) ;
\draw   (210,15) -- (230,15) -- (230,30) -- (210,30) -- cycle ;
\draw    (220,60) -- (220,50) ;
\draw    (220,35) -- (220,30) ;
\draw    (220,60) .. controls (210.17,64.43) and (205,59.47) .. (205,70) ;
\draw  [fill={rgb, 255:red, 0; green, 0; blue, 0 }  ,fill opacity=1 ] (217,60) .. controls (217,58.34) and (218.34,57) .. (220,57) .. controls (221.66,57) and (223,58.34) .. (223,60) .. controls (223,61.66) and (221.66,63) .. (220,63) .. controls (218.34,63) and (217,61.66) .. (217,60) -- cycle ;
\draw    (220,60) .. controls (229.88,64.47) and (235,59.72) .. (235,70) ;
\draw   (210,35) -- (230,35) -- (230,50) -- (210,50) -- cycle ;
\draw    (205,90) -- (205,70) ;
\draw   (220,70) -- (250,70) -- (250,85) -- (220,85) -- cycle ;
\draw    (235,90) -- (235,85) ;
\draw    (165,90) -- (165,70) ;
\draw    (135,90) -- (135,70) ;
\draw  [draw opacity=0] (165,40) -- (205,40) -- (205,55) -- (165,55) -- cycle ;

\draw (150,23) node  [font=\footnotesize]  {$p$};
\draw (135,57.5) node  [font=\footnotesize]  {$g$};
\draw (220,22.5) node  [font=\footnotesize]  {$p$};
\draw (220,42.5) node  [font=\footnotesize]  {$g$};
\draw (235,77.5) node  [font=\footnotesize]  {$g_{\dagger }( p)$};
\draw (185,52.5) node  [font=\footnotesize]  {$=$};

\end{tikzpicture}
     \caption{\protect\BayesianInversion{}.}%
    \label{diagram-bayesian-inversion}%
  \end{figure}
\end{defi}

\begin{prop}
  \label{prop:bayes-inversions-from-conditionals}
  In a \partialMarkovCategory{}, all \BayesianInversions{} exist. The
  \BayesianInversion{} of $g$ with respect to $p$ is $(p ⨾ g)$-\almostSurely{} unique.
\end{prop}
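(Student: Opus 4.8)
The plan is to observe that the \BayesianInversion{} equation of \Cref{diagram-bayesian-inversion} is nothing but the statement that $\bayesinv{g}{p}$ is a \conditional{}, in the sense of \Cref{def:conditional}, of a suitable joint state. Both the existence of the \BayesianInversion{} and the claim that it is $(p ⨾ g)$-\almostSurely{} unique then descend directly from the corresponding properties of \conditionals{}, exactly as in the \MarkovCategory{} case~\cite[Proposition 11.17]{fritz_2020}.

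First I would introduce the joint state $ψ ፡ I → Y ⊗ X$ given by $ψ = p ⨾ ν_X ⨾ (g ⊗ \id{X})$, that is, the left-hand side of \Cref{diagram-bayesian-inversion} read with the $Y$-output first and the retained copy of $X$ second. Its first \marginal{} simplifies by counitality: $ψ ⨾ π₁ = p ⨾ ν_X ⨾ (g ⊗ ε_X) = p ⨾ g$, using $ν_X ⨾ (\id{X} ⊗ ε_X) = \id{X}$. I would then unfold the right-hand side of the diagram: for a state $f ፡ I → Y$ and a morphism $c ፡ Y → X$, the \conditionalComposition{} specialises to $f ⊲ c = f ⨾ ν_Y ⨾ (\id{Y} ⊗ c)$, so taking $f = p ⨾ g$ and $c = \bayesinv{g}{p}$ identifies the right-hand side with $(p ⨾ g) ⊲ \bayesinv{g}{p}$. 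Combining the two computations, the \BayesianInversion{} equation becomes $ψ = (ψ ⨾ π₁) ⊲ \bayesinv{g}{p}$, which is verbatim the defining condition for $\bayesinv{g}{p}$ to be a \conditional{} of $ψ$ with respect to $Y$.

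With this identification both claims are immediate. Existence holds because a \partialMarkovCategory{} has \conditionals{} by \Cref{def:partialMarkov}, so $ψ$ admits one and I set $\bayesinv{g}{p} := \cond(ψ)$. For uniqueness, any two \BayesianInversions{} $g_1, g_2 ፡ Y → X$ of $g$ with respect to $p$ satisfy $(p ⨾ g) ⊲ g_1 = ψ = (p ⨾ g) ⊲ g_2$, which by \Cref{def:almostSureEquality} says exactly that $g_1$ and $g_2$ are $(p ⨾ g)$-\almostSurelyEqual{}; equivalently, a \conditional{} of $ψ$ is determined only up to $(ψ ⨾ π₁)$-\almostSureEquality{}, and $ψ ⨾ π₁ = p ⨾ g$. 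The only point requiring care is the string-diagram bookkeeping of the first step—recording that $g$ acts on one copy while the other is retained, so that $ψ$ has output order $(Y, X)$ and first \marginal{} $p ⨾ g$—after which everything reduces to a direct appeal to the definition of \conditionals{}.
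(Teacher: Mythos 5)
Your proposal is correct and follows essentially the same route as the paper: both construct the joint state $p ⨾ ν_{X} ⨾ (g ⊗ \id{})$, observe that its first \marginal{} is $p ⨾ g$, and obtain the \BayesianInversion{} as a \conditional{} of that state, with $(p ⨾ g)$-almost-sure uniqueness inherited from the almost-sure uniqueness of \conditionals{}. Your write-up simply spells out the bookkeeping that the paper leaves implicit.
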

\begin{proof}
  The \BayesianInversion{} of $g$ with respect to $p$ is given by
  the \conditional{} of the morphism $p ⨾ ν_{X} ⨾ (g ⊗ \id{})$. Explicitly,
  $$\bayesinv{g}{p} = \cond(p ⨾ ν_{X} ⨾ (g ⊗ \id{})).$$
  Note that $(p ⨾ ν_{X} ⨾ (g ⊗ \id{}) ⨾ π₁) = (p ⨾ g),$
  making the \conditional{} 
  $(p ⨾ g)$-\almostSurely{} unique.
\end{proof}

\BayesianInversions{} can be computed compositionally, in the same way
\conditionals{} can
(\Cref{prop:bayes-inversions-from-conditionals,prop:conditional-composite-1}).

\begin{prop}[Conditional of a composite]%
  \label{prop:conditional-composite-1}%
  Let $f ፡ X → Y$ and $g ፡ Y → Z ⊗ W$ be two morphisms of a
  \partialMarkovCategory{}. The \conditional{} of their composition can be
  computed from a \BayesianInversion{}, $\pmb{b}(f,g) = \bayesinv{(g ⨾ \pi_1)}{f}$, and
  the \conditional{} of the second morphism, $\cond(g) ፡ Y ⊗ Z → W$, as in
  \Cref{fig:conditionalComposition}.
\end{prop}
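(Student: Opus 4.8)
The plan is to exhibit the morphism $c \colon X \otimes Z \to W$ drawn in \Cref{fig:conditionalComposition} --- namely $c = (\id{X} \otimes \nu_Z) \fatsemi (\pmb{b}(f,g) \otimes \id{Z}) \fatsemi \cond(g)$, which copies $Z$, reconstructs $Y$ from $(X,Z)$ via the \BayesianInversion{} $\pmb{b}(f,g) = \bayesinv{(g \fatsemi \pi_1)}{f}$, and feeds $(Y,Z)$ into $\cond(g)$ --- and then to verify directly that it is a \conditional{} of $f \fatsemi g$ with respect to $Z$. By \Cref{def:conditional} this reduces to the single equation $f \fatsemi g = ((f \fatsemi g) \fatsemi \pi_1) \triangleleft c$. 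Writing $k = g \fatsemi \pi_1 \colon Y \to Z$ for the first \marginal{} of $g$, I record the two inputs to the argument: first, that $\cond(g)$ is a \conditional{} of $g$, i.e.\ $g = k \triangleleft \cond(g)$; and second, that by \Cref{prop:bayes-inversions-from-conditionals} the inversion $\pmb{b}(f,g)$ is precisely the \conditional{} $\cond(f \fatsemi \nu_Y \fatsemi (k \otimes \id{Y}))$, so its defining equation reads $f \fatsemi \nu_Y \fatsemi (k \otimes \id{Y}) = (f \fatsemi k) \triangleleft \pmb{b}(f,g)$, using $(f \fatsemi \nu_Y \fatsemi (k \otimes \id{Y})) \fatsemi \pi_1 = f \fatsemi k$.

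The computation proceeds in three moves. Starting from $f \fatsemi g$, I substitute $g = k \triangleleft \cond(g)$ and push $f$ through the copy node, rewriting the result as $(f \fatsemi \nu_Y \fatsemi (k \otimes \id{Y})) \fatsemi d$, where $d \colon Z \otimes Y \to Z \otimes W$ is the common post-processing that retains $Z$ and sets $W = \cond(g)(Y,Z)$. Next I apply the \BayesianInversion{} identity above to replace the ``sample $Y$ via $f$, then compute $Z = k(Y)$'' part $f \fatsemi \nu_Y \fatsemi (k \otimes \id{Y})$ by the ``compute $Z = (f \fatsemi k)(X)$, then reconstruct $Y$'' part $(f \fatsemi k) \triangleleft \pmb{b}(f,g)$, obtaining $f \fatsemi g = ((f \fatsemi k) \triangleleft \pmb{b}(f,g)) \fatsemi d$. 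Finally I absorb $d$ back into the second leg of the \conditionalComposition{}: unfolding both sides shows that $((f \fatsemi k) \triangleleft \pmb{b}(f,g)) \fatsemi d$ and $(f \fatsemi k) \triangleleft c$ are the same string diagram --- each runs $f \fatsemi k$ to obtain $Z$, reconstructs $Y$ from $(X,Z)$ via $\pmb{b}(f,g)$, and outputs the pair $(Z, \cond(g)(Y,Z))$. Since $f \fatsemi k = (f \fatsemi g) \fatsemi \pi_1$, this is exactly $((f \fatsemi g) \fatsemi \pi_1) \triangleleft c$, as required.

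The only genuine work lies in the string-diagrammatic bookkeeping of the first and last moves: both rely on the comonoid laws (coassociativity and cocommutativity of $\nu$) to reconcile how often, and in what order, the wires $X$ and $Z$ are duplicated --- in $((f \fatsemi k) \triangleleft \pmb{b}(f,g)) \fatsemi d$ the wire $Z$ is copied by the \conditionalComposition{} and again by $d$, whereas in $(f \fatsemi k) \triangleleft c$ it is copied inside $c$, and these two patterns must be shown to coincide. This is routine but is where care is needed; everything else is a direct appeal to \Cref{def:conditional,prop:bayes-inversions-from-conditionals}. Because \conditionals{} and \BayesianInversions{} are determined only up to the appropriate \almostSureEquality{}, the statement should be read as asserting that the displayed $c$ is a valid \conditional{}, which is exactly what the chain of equalities above establishes.
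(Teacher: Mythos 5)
Your proof is correct and takes essentially the same route as the paper's: you expand $g$ as $(g \fatsemi \pi_1) \triangleleft \cond(g)$, apply the defining equation of the \BayesianInversion{} to reverse the sampling order, and reconcile the resulting copy nodes via coassociativity and cocommutativity --- precisely the paper's steps (i)--(iii), carried out algebraically rather than pictorially. The comonoid bookkeeping you flag at the end is indeed the only place where real care is needed, and your handling of it is sound.
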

\begin{figure}[h!]

\tikzset{every picture/.style={line width=0.75pt}} %

\begin{tikzpicture}[x=0.75pt,y=0.75pt,yscale=-1,xscale=1]
\draw   (230,45) -- (280,45) -- (280,60) -- (230,60) -- cycle ;
\draw    (255,70) -- (255,60) ;
\draw    (245,45) -- (245,35) ;
\draw    (265,45) -- (265,35) ;
\draw   (335,30) -- (385,30) -- (385,45) -- (335,45) -- cycle ;
\draw    (350,30) -- (350,10) ;
\draw    (385,20) -- (385,10) ;
\draw    (385,20) .. controls (375.17,24.43) and (370,19.47) .. (370,30) ;
\draw  [fill={rgb, 255:red, 0; green, 0; blue, 0 }  ,fill opacity=1 ] (382,20) .. controls (382,18.34) and (383.34,17) .. (385,17) .. controls (386.66,17) and (388,18.34) .. (388,20) .. controls (388,21.66) and (386.66,23) .. (385,23) .. controls (383.34,23) and (382,21.66) .. (382,20) -- cycle ;
\draw    (385,20) .. controls (394.88,24.47) and (400,19.72) .. (400,30) ;
\draw   (365,60) -- (395,60) -- (395,75) -- (365,75) -- cycle ;
\draw    (380,85) -- (380,75) ;
\draw    (360,45) .. controls (360,55.22) and (370,50.72) .. (370,60) ;
\draw    (400,45) .. controls (400,55.22) and (390,50.72) .. (390,60) ;
\draw    (400,45) -- (400,30) ;
\draw  [draw opacity=0] (300,40) -- (325,40) -- (325,65) -- (300,65) -- cycle ;
\draw  [draw opacity=0] (400,40) -- (425,40) -- (425,65) -- (400,65) -- cycle ;

\draw (255,52.5) node  [font=\footnotesize]  {$\cond(f⨾g)$};
\draw (360,37.5) node  [font=\footnotesize]  {$\pmb{b}(f,g)$};
\draw (380,67.5) node  [font=\footnotesize]  {$\cond(g)$};
\draw (312.5,52.5) node  [font=\footnotesize]  {$=_{f ⨾ g ⨾ \pi _{1}}$};
\draw (412.5,52.5) node  [font=\footnotesize]  {$;$};

\end{tikzpicture}
   \caption{\protect\Conditional{} of a composition.}
  \label{fig:conditionalComposition}
\end{figure}
\begin{proof}
  We reason with string diagrams as in \Cref{fig:proof-conditional-composition}. We use \emph{(i)} the definition of
  \conditional{} for \(g\); \emph{(ii)} the definition of \BayesianInversion{} of \(g \dcomp \pi_{1}\) with respect to \(f\); and
  \emph{(iii)} associativity and commutativity.
  \begin{figure}[h!]

\tikzset{every picture/.style={line width=0.75pt}} %

\begin{tikzpicture}[x=0.75pt,y=0.75pt,yscale=-1,xscale=1]
\draw  [draw opacity=0] (210,60) -- (235,60) -- (235,85) -- (210,85) -- cycle ;
\draw  [draw opacity=0] (390.01,60) -- (415.01,60) -- (415.01,85) -- (390.01,85) -- cycle ;
\draw   (110,50) -- (140,50) -- (140,65) -- (110,65) -- cycle ;
\draw   (110,75) -- (140,75) -- (140,90) -- (110,90) -- cycle ;
\draw    (125,50) -- (125,40) ;
\draw    (125,75) -- (125,65) ;
\draw    (115,100) -- (115,90) ;
\draw    (135,100) -- (135,90) ;
\draw   (165,35) -- (195,35) -- (195,50) -- (165,50) -- cycle ;
\draw    (180,35) -- (180,25) ;
\draw    (180,55) -- (180,50) ;
\draw    (180,55) .. controls (170.17,59.43) and (165,54.47) .. (165,65) ;
\draw  [fill={rgb, 255:red, 0; green, 0; blue, 0 }  ,fill opacity=1 ] (177,55) .. controls (177,53.34) and (178.34,52) .. (180,52) .. controls (181.66,52) and (183,53.34) .. (183,55) .. controls (183,56.66) and (181.66,58) .. (180,58) .. controls (178.34,58) and (177,56.66) .. (177,55) -- cycle ;
\draw    (180,55) .. controls (189.88,59.47) and (195,54.72) .. (195,65) ;
\draw   (180,65) -- (210,65) -- (210,80) -- (180,80) -- cycle ;
\draw    (185,95) -- (185,80) ;
\draw    (205,90) -- (205,80) ;
\draw  [fill={rgb, 255:red, 0; green, 0; blue, 0 }  ,fill opacity=1 ] (202,90) .. controls (202,88.34) and (203.34,87) .. (205,87) .. controls (206.66,87) and (208,88.34) .. (208,90) .. controls (208,91.66) and (206.66,93) .. (205,93) .. controls (203.34,93) and (202,91.66) .. (202,90) -- cycle ;
\draw  [fill={rgb, 255:red, 0; green, 0; blue, 0 }  ,fill opacity=1 ] (182,95) .. controls (182,93.34) and (183.34,92) .. (185,92) .. controls (186.66,92) and (188,93.34) .. (188,95) .. controls (188,96.66) and (186.66,98) .. (185,98) .. controls (183.34,98) and (182,96.66) .. (182,95) -- cycle ;
\draw    (185,95) .. controls (170.25,99.43) and (165,94.47) .. (165,105) ;
\draw    (185,95) .. controls (199.81,99.47) and (205,94.72) .. (205,105) ;
\draw   (180,105) -- (210,105) -- (210,120) -- (180,120) -- cycle ;
\draw    (165,90) -- (165,65) ;
\draw    (165,90) .. controls (165,100.22) and (185,95.72) .. (185,105) ;
\draw    (165,130) -- (165,105) ;
\draw    (195,130) -- (195,120) ;
\draw  [draw opacity=0] (135,60) -- (160,60) -- (160,85) -- (135,85) -- cycle ;
\draw   (270,25) -- (300,25) -- (300,40) -- (270,40) -- cycle ;
\draw    (265,15) -- (265,5) ;
\draw    (265,15) .. controls (255.17,19.43) and (245,14.47) .. (245,25) ;
\draw  [fill={rgb, 255:red, 0; green, 0; blue, 0 }  ,fill opacity=1 ] (262,15) .. controls (262,13.34) and (263.34,12) .. (265,12) .. controls (266.66,12) and (268,13.34) .. (268,15) .. controls (268,16.66) and (266.66,18) .. (265,18) .. controls (263.34,18) and (262,16.66) .. (262,15) -- cycle ;
\draw    (265,15) .. controls (274.88,19.47) and (285,14.72) .. (285,25) ;
\draw    (285,45) -- (285,40) ;
\draw  [draw opacity=0] (300,50) -- (325,50) -- (325,75) -- (300,75) -- cycle ;
\draw   (270,45) -- (300,45) -- (300,60) -- (270,60) -- cycle ;
\draw    (275,75) -- (275,60) ;
\draw    (295,70) -- (295,60) ;
\draw  [fill={rgb, 255:red, 0; green, 0; blue, 0 }  ,fill opacity=1 ] (292,70) .. controls (292,68.34) and (293.34,67) .. (295,67) .. controls (296.66,67) and (298,68.34) .. (298,70) .. controls (298,71.66) and (296.66,73) .. (295,73) .. controls (293.34,73) and (292,71.66) .. (292,70) -- cycle ;
\draw    (245,65) -- (245,25) ;
\draw  [fill={rgb, 255:red, 0; green, 0; blue, 0 }  ,fill opacity=1 ] (272,75) .. controls (272,73.34) and (273.34,72) .. (275,72) .. controls (276.66,72) and (278,73.34) .. (278,75) .. controls (278,76.66) and (276.66,78) .. (275,78) .. controls (273.34,78) and (272,76.66) .. (272,75) -- cycle ;
\draw    (275,75) .. controls (260.25,79.43) and (260,74.47) .. (260,85) ;
\draw    (275,75) .. controls (289.81,79.47) and (290,74.72) .. (290,85) ;
\draw    (245,65) .. controls (245,75.22) and (250,75.72) .. (250,85) ;
\draw   (235,85) -- (275,85) -- (275,100) -- (235,100) -- cycle ;
\draw  [fill={rgb, 255:red, 0; green, 0; blue, 0 }  ,fill opacity=1 ] (274,110) .. controls (274,108.34) and (275.34,107) .. (277,107) .. controls (278.66,107) and (280,108.34) .. (280,110) .. controls (280,111.66) and (278.66,113) .. (277,113) .. controls (275.34,113) and (274,111.66) .. (274,110) -- cycle ;
\draw    (277,110) .. controls (267.17,114.43) and (255,109.47) .. (255,120) ;
\draw    (277,110) .. controls (286.88,114.47) and (295,109.72) .. (295,120) ;
\draw   (270,120) -- (300,120) -- (300,135) -- (270,135) -- cycle ;
\draw    (255,100) .. controls (255,110.22) and (275,110.72) .. (275,120) ;
\draw    (290,85) .. controls (290,95.22) and (277,100.72) .. (277,110) ;
\draw    (285,145) -- (285,135) ;
\draw    (255,145) -- (255,120) ;
\draw   (350.01,25) -- (380.01,25) -- (380.01,40) -- (350.01,40) -- cycle ;
\draw    (345.01,15) -- (345.01,5) ;
\draw    (345.01,15) .. controls (335.18,19.43) and (325.01,14.47) .. (325.01,25) ;
\draw  [fill={rgb, 255:red, 0; green, 0; blue, 0 }  ,fill opacity=1 ] (342.01,15) .. controls (342.01,13.34) and (343.35,12) .. (345.01,12) .. controls (346.67,12) and (348.01,13.34) .. (348.01,15) .. controls (348.01,16.66) and (346.67,18) .. (345.01,18) .. controls (343.35,18) and (342.01,16.66) .. (342.01,15) -- cycle ;
\draw    (345.01,15) .. controls (354.88,19.47) and (365.01,14.72) .. (365.01,25) ;
\draw    (365.01,45) -- (365.01,40) ;
\draw   (350.01,45) -- (380.01,45) -- (380.01,60) -- (350.01,60) -- cycle ;
\draw    (355.01,75) -- (355,60) ;
\draw    (375.01,70) -- (375,60) ;
\draw  [fill={rgb, 255:red, 0; green, 0; blue, 0 }  ,fill opacity=1 ] (372.01,70) .. controls (372.01,68.34) and (373.35,67) .. (375.01,67) .. controls (376.67,67) and (378.01,68.34) .. (378.01,70) .. controls (378.01,71.66) and (376.67,73) .. (375.01,73) .. controls (373.35,73) and (372.01,71.66) .. (372.01,70) -- cycle ;
\draw    (325.01,80) -- (325,25) ;
\draw  [fill={rgb, 255:red, 0; green, 0; blue, 0 }  ,fill opacity=1 ] (352.01,75) .. controls (352.01,73.34) and (353.35,72) .. (355.01,72) .. controls (356.67,72) and (358.01,73.34) .. (358.01,75) .. controls (358.01,76.66) and (356.67,78) .. (355.01,78) .. controls (353.35,78) and (352.01,76.66) .. (352.01,75) -- cycle ;
\draw    (355.01,75) .. controls (346.44,77.24) and (340.15,74.1) .. (340.01,80) ;
\draw    (355.01,75) .. controls (369.82,79.47) and (375.01,74.72) .. (375.01,85) ;
\draw    (340.01,80) .. controls (340.15,87.67) and (324.58,90.53) .. (325.01,100) ;
\draw   (330.01,100) -- (370.01,100) -- (370.01,115) -- (330.01,115) -- cycle ;
\draw    (325.01,145) -- (325.01,100) ;
\draw    (375.01,90) .. controls (386.12,94.47) and (390.01,89.72) .. (390.01,100) ;
\draw    (375.01,90) .. controls (363.95,94.43) and (360.01,89.47) .. (360.01,100) ;
\draw  [fill={rgb, 255:red, 0; green, 0; blue, 0 }  ,fill opacity=1 ] (372.01,90) .. controls (372.01,88.34) and (373.35,87) .. (375.01,87) .. controls (376.67,87) and (378.01,88.34) .. (378.01,90) .. controls (378.01,91.66) and (376.67,93) .. (375.01,93) .. controls (373.35,93) and (372.01,91.66) .. (372.01,90) -- cycle ;
\draw    (325.01,80) .. controls (325.01,90.22) and (339.72,90.53) .. (340.01,100) ;
\draw   (355.01,125) -- (385.01,125) -- (385.01,140) -- (355.01,140) -- cycle ;
\draw    (350.01,115) .. controls (350.01,121.82) and (360.01,118.82) .. (360.01,125) ;
\draw    (390.01,115) .. controls (390.01,121.82) and (380.01,118.82) .. (380.01,125) ;
\draw  [draw opacity=0] (390.01,60) -- (415.01,60) -- (415.01,85) -- (390.01,85) -- cycle ;
\draw    (390,115) -- (390,100) ;
\draw    (370.01,150) -- (370.01,140) ;
\draw    (375.01,90) -- (375.01,85) ;
\draw  [draw opacity=0] (300,60) -- (325,60) -- (325,85) -- (300,85) -- cycle ;
\draw  [draw opacity=0] (300,60) -- (325,60) -- (325,85) -- (300,85) -- cycle ;

\draw (222.5,72.5) node  [font=\footnotesize]  {$\overset{\emph{(i)}}{=}$};
\draw (402.51,72.5) node  [font=\footnotesize]  {$;$};
\draw (125,57.5) node  [font=\footnotesize]  {$f$};
\draw (125,82.5) node  [font=\footnotesize]  {$g$};
\draw (180,42.5) node  [font=\footnotesize]  {$f$};
\draw (195,72.5) node  [font=\footnotesize]  {$g$};
\draw (195,112.5) node  [font=\footnotesize]  {$\cond(g)$};
\draw (152.5,68) node  [font=\footnotesize]  {$\overset{\emph{(ii)}}{=}$};
\draw (285,32.5) node  [font=\footnotesize]  {$f$};
\draw (285,52.5) node  [font=\footnotesize]  {$g$};
\draw (255,92.5) node  [font=\footnotesize]  {$\pmb{b}(f,g)$};
\draw (285,127.5) node  [font=\footnotesize]  {$\cond(g)$};
\draw (365.01,32.5) node  [font=\footnotesize]  {$f$};
\draw (365.01,52.5) node  [font=\footnotesize]  {$g$};
\draw (350.01,107.5) node  [font=\footnotesize]  {$\pmb{b}(f,g)$};
\draw (370.01,132.5) node  [font=\footnotesize]  {$\cond(g)$};
\draw (312.5,67.5) node  [font=\footnotesize]  {$\overset{\emph{(iii)}}{=}$};

\end{tikzpicture}
     \caption{Construction of the \protect\conditional{} of a composition.}\label{fig:proof-conditional-composition}
  \end{figure}
\end{proof}

\begin{cor}[Bayesian inversion of a composite]
  \label{prop:composition-bayes-inversions}
  A \BayesianInversion{} of a composite channel \((f ⨾ g) ፡ X → Y\) with respect
  to a distribution \(p ፡ I → X\) can be computed by first inverting \(f\) with
  respect to \(p\) and then inverting \(g\) with respect to \((p ⨾ f)\):
  \[\bayesinv{(f ⨾ g)}{p} = \bayesinv{g}{p ⨾ f} ⨾ \bayesinv{f}{p}.\]
\end{cor}

\begin{exa}[Bayesian inversions via density functions, {{c.f. \cite[Example 3.9]{cho_2019}}}]
  \label{exa:inversionsDensity}
  In \BorelStoch{}, the \BayesianInversion{} of a channel $f ፡ (X,Σ_X) →
  \Giry(Y,Σ_Y)$ with respect to a prior, $p ∈ \Giry(X,Σ_X)$, is a channel
  \(\bayesinv{f}{p} \colon (Y,Σ_Y) \to \Giry(X,Σ_X)\) satisfying
  $$
    ∫_{x ∈ S_X} ∫_{y ∈ S_Y}  f(dy | x) · p(dx) =
    ∫_{x ∈ S_X} ∫_{y ∈ S_Y} \bayesinv{f}{p}(dx | y) ·
    ∫_{x₀ ∈ X} f(dy | x₀) · p(dx₀),
  $$
  for each $S_X ∈ Σ_X$ and $S_Y ∈ Σ_Y$.  These inversions need not to exist in
  the more general category of arbitrary measurable spaces
  \cite{fritz2019probability} and, even for standard Borel spaces, they could be
  difficult to compute. Note how the inversion, $\bayesinv{f}{p}$, cannot depend
  on $dy$ but just on $y$. However, distributions and channels are often given
  by density functions,
$$
p(S_X) = ∫_{x ∈ S_X} \pdf{p}(x)\, dx;
\qquad
f(S_Y|x) = ∫_{y ∈ S_Y} \pdf{f}(y,x)\, dy.
$$
In these cases, we can construct a \BayesianInversion{} as follows: we compute
its density function and integrate it to obtain the following channel,
$$
\pdf{\bayesinv{f}{p}}(x,y) = \frac{\pdf{f}(y,x) · \pdf{p}(x)}{∫_{x₀ ∈ X} \pdf{f}(y,x₀) · \pdf{p}(x₀)\,dx₀};
\quad
\bayesinv{f}{p}(S_X,y) = \int_{x ∈ S_X} \pdf{\bayesinv{f}{p}}(x,y)\, dx,
$$
which does constitute a \BayesianInversion{}: note that
\begin{align*}
  & ∫_{x ∈ S_X}∫_{y ∈ S_Y} \pdf{\bayesinv{f}{p}}(x,y) · \left(
    ∫_{x₀ ∈ X} \pdf{f}(y,x₀) · \pdf{p}(x₀)\,dx₀
  \right)\, = \\
  & ∫_{x ∈ S_X}∫_{y ∈ S_Y} \pdf{f}(y,x) · \pdf{p}(x)\,dx.
\end{align*}
\end{exa}
\subsection{Normalisation}
In some cases, one may not be interested in the probability of failure of a
channel and only look at the relative probabilities of the other events. This
may be the case when updating channels on
observations—\Cref{sec:discretePartialMarkovCategories} studies update
procedures in \partialMarkovCategories{}. In these cases, one would like to work
with \emph{normalised} channels.

The \normalisation{} of a finitely supported stochastic channel \(f ፡ X → Y\) is
usually defined by rescaling the output subdistribution by its total probability
mass.
\[\normal{f} (y \given x) = \dfrac{f(y \given x)}{1- f(\bot \given x)}\]
This definition highlights that \normalisation{} is not well-defined on the
inputs that make the channel fail \almostSurely{}, i.e.\ the elements $x ∈ X$
for which \(f(\bot \given x) = 1\). For these inputs, any distribution may act
as a \normalisation{}. While one may define \normalisation{} as a partial
operation from subdistributions to distributions~\cite{cho_2019}; one may also
define \normalisation{} as an operation on partial stochastic channels defined
up to \almostSureEquality{}. As we did for \conditionals{}, we will choose the
latter.%

\begin{defi}[Normalisation]%
\label{def:normalisation}%
\defining{linkNormalisation}{}%
Let \m{f ፡ X → Y} be a morphism in a \partialMarkovCategory{}.
A \defining{linknormal}{\emph{normalisation}} of \(f\) is any morphism \(\normal{f} ፡ X → Y\) such that \(f = (f ⨾ ε) ⊲ \normal{f}\).
\end{defi}

In \MarkovCategories{}, \normalisation{} trivialises: because all morphisms are required to be \total{}, every morphism is its own \normalisation{}.
In \partialMarkovCategories{}, \normalisation{} is a consequence of conditionals.

\begin{prop}%
  In a \partialMarkovCategory{}, all \normalisations{} exist.
  The \normalisation{} of $f ፡ X → Y$ is \((f ⨾ ε_Y)\)-\almostSurely{} unique.
\end{prop}
\begin{proof}
  Given a morphism $f ፡ X → Y$, consider a conditional $\cond(f) ፡ X ⊗ I → Y$ of
  \(f\) with marginal \(f ⨾ ε_{Y}\). The equation defining conditionals
  instantiated in this case is the equation defining normalisation. As a
  consequence, normalisation is $(f \dcomp \discard_{Y})$-\almostSurely{}
  unique, as conditionals are.
\end{proof}

\begin{rem}
  Multiple authors depict \normalisation{} as a shaded box~\cite{jacobs,tull23}: the content of the box represents the morphism that is being normalised. We have just seen that \normalisation{} induces an operator taking a morphism $f ፡ X → Y$ to a morphism $\normal{f} ፡ X → Y$ defined up to \((f ⨾ ε_Y)\)-\almostSureEquality{}. For this reason, the shaded box representing \(\normal{f}\) should always appear \kl[conditional composition]{conditionally precomposed} with $f ⨾ ε_{Y}$ as on the right of \Cref{fig:normalisation}. 
  
  Finally, let us remark that the box is not functorial \cite{mellies06}: while
  it may be formalised as a collage of string diagrams~\cite{collages23},
  details are not necessary for this application and are left for further work.
\end{rem}

\begin{figure}[h!]

\tikzset{every picture/.style={line width=0.75pt}} %

\begin{tikzpicture}[x=0.75pt,y=0.75pt,yscale=-1,xscale=1]
\draw   (135,40) -- (155,40) -- (155,55) -- (135,55) -- cycle ;
\draw    (145,40) -- (145,20) ;
\draw    (145,70) -- (145,55) ;
\draw   (195,40) -- (215,40) -- (215,55) -- (195,55) -- cycle ;
\draw   (225,40) -- (245,40) -- (245,55) -- (225,55) -- cycle ;
\draw    (220,30) -- (220,20) ;
\draw    (220,30) .. controls (210.17,34.43) and (205,29.47) .. (205,40) ;
\draw  [fill={rgb, 255:red, 0; green, 0; blue, 0 }  ,fill opacity=1 ] (217,30) .. controls (217,28.34) and (218.34,27) .. (220,27) .. controls (221.66,27) and (223,28.34) .. (223,30) .. controls (223,31.66) and (221.66,33) .. (220,33) .. controls (218.34,33) and (217,31.66) .. (217,30) -- cycle ;
\draw    (220,30) .. controls (229.88,34.47) and (235,29.72) .. (235,40) ;
\draw    (205,70) -- (205,55) ;
\draw    (235,65) -- (235,55) ;
\draw  [draw opacity=0] (160,30) -- (190,30) -- (190,55) -- (160,55) -- cycle ;
\draw  [draw opacity=0] (245,30) -- (275,30) -- (275,55) -- (245,55) -- cycle ;
\draw  [fill={rgb, 255:red, 0; green, 0; blue, 0 }  ,fill opacity=1 ] (232,65) .. controls (232,63.34) and (233.34,62) .. (235,62) .. controls (236.66,62) and (238,63.34) .. (238,65) .. controls (238,66.66) and (236.66,68) .. (235,68) .. controls (233.34,68) and (232,66.66) .. (232,65) -- cycle ;
\draw  [color={rgb, 255:red, 129; green, 161; blue, 193 }  ,draw opacity=0.4 ][fill={rgb, 255:red, 129; green, 161; blue, 193 }  ,fill opacity=0.25 ] (190,40) .. controls (190,37.24) and (192.24,35) .. (195,35) -- (215,35) .. controls (217.76,35) and (220,37.24) .. (220,40) -- (220,55) .. controls (220,57.76) and (217.76,60) .. (215,60) -- (195,60) .. controls (192.24,60) and (190,57.76) .. (190,55) -- cycle ;

\draw (175,42.5) node    {$=$};
\draw (145,47.5) node  [font=\footnotesize]  {$f$};
\draw (260,42.5) node    {$;$};
\draw (205,47.5) node  [font=\footnotesize]  {$f$};
\draw (233.5,47.5) node  [font=\footnotesize]  {$f$};

\end{tikzpicture}

   \caption{\protect\Normalisation{} of a morphism.}%
  \label{fig:normalisation}
\end{figure}

Normalising a channel that is already normalised should not have any effect.
In fact, \normalisation{} is an idempotent operation, up to \almostSureEquality{}.

\begin{prop}%
  \label{prop:normalisationsIdempotent}%
  The \normalisation{} of the \normalisation{} of a morphism $f ፡ X → Y$ is \((f ⨾ ε_Y)\)-\almostSurelyEqual{} to the original \normalisation{},
  \[
  \normal{\normal{f}} =_{(f ⨾ ε)} \normal{f}.
  \]
\end{prop}
\begin{proof}
  We reason by the string diagrams of \Cref{diagram:idempotent}.
  \begin{figure}[h!]

\tikzset{every picture/.style={line width=0.75pt}} %

\begin{tikzpicture}[x=0.75pt,y=0.75pt,yscale=-1,xscale=1]
\draw    (400,85) -- (400,70) ;
\draw    (290,25) -- (290,20) ;
\draw   (185,40) -- (205,40) -- (205,55) -- (185,55) -- cycle ;
\draw   (225,40) -- (245,40) -- (245,55) -- (225,55) -- cycle ;
\draw    (215,25) -- (215,15) ;
\draw    (215,25) .. controls (201.89,29.43) and (195,24.47) .. (195,35) ;
\draw  [fill={rgb, 255:red, 0; green, 0; blue, 0 }  ,fill opacity=1 ] (212,25) .. controls (212,23.34) and (213.34,22) .. (215,22) .. controls (216.66,22) and (218,23.34) .. (218,25) .. controls (218,26.66) and (216.66,28) .. (215,28) .. controls (213.34,28) and (212,26.66) .. (212,25) -- cycle ;
\draw    (215,25) .. controls (228.17,29.47) and (235,24.72) .. (235,35) ;
\draw    (195,70) -- (195,55) ;
\draw    (195,40) -- (195,35) ;
\draw  [draw opacity=0] (245,35) -- (275,35) -- (275,60) -- (245,60) -- cycle ;
\draw  [draw opacity=0] (585,35) -- (615,35) -- (615,60) -- (585,60) -- cycle ;
\draw  [fill={rgb, 255:red, 0; green, 0; blue, 0 }  ,fill opacity=1 ] (232,65) .. controls (232,63.34) and (233.34,62) .. (235,62) .. controls (236.66,62) and (238,63.34) .. (238,65) .. controls (238,66.66) and (236.66,68) .. (235,68) .. controls (233.34,68) and (232,66.66) .. (232,65) -- cycle ;
\draw  [color={rgb, 255:red, 129; green, 161; blue, 193 }  ,draw opacity=0.75 ][fill={rgb, 255:red, 129; green, 161; blue, 193 }  ,fill opacity=0.25 ] (180,40.4) .. controls (180,37.69) and (182.19,35.5) .. (184.9,35.5) -- (205.1,35.5) .. controls (207.81,35.5) and (210,37.69) .. (210,40.4) -- (210,55.1) .. controls (210,57.81) and (207.81,60) .. (205.1,60) -- (184.9,60) .. controls (182.19,60) and (180,57.81) .. (180,55.1) -- cycle ;
\draw    (235,40) -- (235,35) ;
\draw    (235,65) -- (235,55) ;
\draw   (305,55) -- (325,55) -- (325,70) -- (305,70) -- cycle ;
\draw    (330,45) .. controls (320.17,49.43) and (315,44.47) .. (315,55) ;
\draw  [fill={rgb, 255:red, 0; green, 0; blue, 0 }  ,fill opacity=1 ] (307,10) .. controls (307,8.34) and (308.34,7) .. (310,7) .. controls (311.66,7) and (313,8.34) .. (313,10) .. controls (313,11.66) and (311.66,13) .. (310,13) .. controls (308.34,13) and (307,11.66) .. (307,10) -- cycle ;
\draw    (330,45) .. controls (339.88,49.47) and (345,44.72) .. (345,55) ;
\draw    (290,85) -- (290,40) ;
\draw    (315,80) -- (315,70) ;
\draw   (335,55) -- (355,55) -- (355,70) -- (335,70) -- cycle ;
\draw  [color={rgb, 255:red, 129; green, 161; blue, 193 }  ,draw opacity=0.75 ][fill={rgb, 255:red, 129; green, 161; blue, 193 }  ,fill opacity=0.25 ] (175,37) .. controls (175,33.13) and (178.13,30) .. (182,30) -- (208,30) .. controls (211.87,30) and (215,33.13) .. (215,37) -- (215,58) .. controls (215,61.87) and (211.87,65) .. (208,65) -- (182,65) .. controls (178.13,65) and (175,61.87) .. (175,58) -- cycle ;
\draw  [color={rgb, 255:red, 129; green, 161; blue, 193 }  ,draw opacity=0.75 ][fill={rgb, 255:red, 129; green, 161; blue, 193 }  ,fill opacity=0.25 ] (300,55) .. controls (300,52.24) and (302.24,50) .. (305,50) -- (325,50) .. controls (327.76,50) and (330,52.24) .. (330,55) -- (330,70) .. controls (330,72.76) and (327.76,75) .. (325,75) -- (305,75) .. controls (302.24,75) and (300,72.76) .. (300,70) -- cycle ;
\draw    (310,10) .. controls (300.17,14.43) and (290,9.47) .. (290,20) ;
\draw    (310,10) .. controls (319.88,14.47) and (330,9.72) .. (330,20) ;
\draw    (345,80) -- (345,70) ;
\draw  [fill={rgb, 255:red, 0; green, 0; blue, 0 }  ,fill opacity=1 ] (342,80) .. controls (342,78.34) and (343.34,77) .. (345,77) .. controls (346.66,77) and (348,78.34) .. (348,80) .. controls (348,81.66) and (346.66,83) .. (345,83) .. controls (343.34,83) and (342,81.66) .. (342,80) -- cycle ;
\draw    (330,45) -- (330,20) ;
\draw    (310,10) -- (310,0) ;
\draw  [fill={rgb, 255:red, 0; green, 0; blue, 0 }  ,fill opacity=1 ] (327,45) .. controls (327,43.34) and (328.34,42) .. (330,42) .. controls (331.66,42) and (333,43.34) .. (333,45) .. controls (333,46.66) and (331.66,48) .. (330,48) .. controls (328.34,48) and (327,46.66) .. (327,45) -- cycle ;
\draw  [draw opacity=0] (355,35) -- (385,35) -- (385,60) -- (355,60) -- cycle ;
\draw  [fill={rgb, 255:red, 0; green, 0; blue, 0 }  ,fill opacity=1 ] (312,80) .. controls (312,78.34) and (313.34,77) .. (315,77) .. controls (316.66,77) and (318,78.34) .. (318,80) .. controls (318,81.66) and (316.66,83) .. (315,83) .. controls (313.34,83) and (312,81.66) .. (312,80) -- cycle ;
\draw  [draw opacity=0] (455,35) -- (485,35) -- (485,60) -- (455,60) -- cycle ;
\draw   (485,40) -- (505,40) -- (505,55) -- (485,55) -- cycle ;
\draw  [fill={rgb, 255:red, 0; green, 0; blue, 0 }  ,fill opacity=1 ] (507,25) .. controls (507,23.34) and (508.34,22) .. (510,22) .. controls (511.66,22) and (513,23.34) .. (513,25) .. controls (513,26.66) and (511.66,28) .. (510,28) .. controls (508.34,28) and (507,26.66) .. (507,25) -- cycle ;
\draw    (495,70) -- (495,55) ;
\draw   (515,40) -- (535,40) -- (535,55) -- (515,55) -- cycle ;
\draw  [color={rgb, 255:red, 129; green, 161; blue, 193 }  ,draw opacity=0.75 ][fill={rgb, 255:red, 129; green, 161; blue, 193 }  ,fill opacity=0.25 ] (482.5,41.5) .. controls (482.5,39.29) and (484.29,37.5) .. (486.5,37.5) -- (503.5,37.5) .. controls (505.71,37.5) and (507.5,39.29) .. (507.5,41.5) -- (507.5,53.5) .. controls (507.5,55.71) and (505.71,57.5) .. (503.5,57.5) -- (486.5,57.5) .. controls (484.29,57.5) and (482.5,55.71) .. (482.5,53.5) -- cycle ;
\draw    (510,25) .. controls (500.17,29.43) and (495,24.47) .. (495,35) ;
\draw    (510,25) .. controls (519.88,29.47) and (525,24.72) .. (525,35) ;
\draw    (525,65) -- (525,55) ;
\draw  [fill={rgb, 255:red, 0; green, 0; blue, 0 }  ,fill opacity=1 ] (522,65) .. controls (522,63.34) and (523.34,62) .. (525,62) .. controls (526.66,62) and (528,63.34) .. (528,65) .. controls (528,66.66) and (526.66,68) .. (525,68) .. controls (523.34,68) and (522,66.66) .. (522,65) -- cycle ;
\draw    (495,40) -- (495,35) ;
\draw    (510,25) -- (510,15) ;
\draw   (565,35) -- (585,35) -- (585,50) -- (565,50) -- cycle ;
\draw  [draw opacity=0] (535,35) -- (565,35) -- (565,60) -- (535,60) -- cycle ;
\draw    (575,35) -- (575,20) ;
\draw    (575,65) -- (575,50) ;
\draw    (525,40) -- (525,35) ;
\draw   (280,25) -- (300,25) -- (300,40) -- (280,40) -- cycle ;
\draw  [color={rgb, 255:red, 129; green, 161; blue, 193 }  ,draw opacity=0.75 ][fill={rgb, 255:red, 129; green, 161; blue, 193 }  ,fill opacity=0.25 ] (275,25.4) .. controls (275,22.69) and (277.19,20.5) .. (279.9,20.5) -- (300.1,20.5) .. controls (302.81,20.5) and (305,22.69) .. (305,25.4) -- (305,40.1) .. controls (305,42.81) and (302.81,45) .. (300.1,45) -- (279.9,45) .. controls (277.19,45) and (275,42.81) .. (275,40.1) -- cycle ;
\draw  [color={rgb, 255:red, 129; green, 161; blue, 193 }  ,draw opacity=0.75 ][fill={rgb, 255:red, 129; green, 161; blue, 193 }  ,fill opacity=0.25 ] (270,22) .. controls (270,18.13) and (273.13,15) .. (277,15) -- (303,15) .. controls (306.87,15) and (310,18.13) .. (310,22) -- (310,43) .. controls (310,46.87) and (306.87,50) .. (303,50) -- (277,50) .. controls (273.13,50) and (270,46.87) .. (270,43) -- cycle ;
\draw    (435,10) -- (435,0) ;
\draw   (430,55) -- (450,55) -- (450,70) -- (430,70) -- cycle ;
\draw    (435,11) .. controls (425.17,15.43) and (420,10.47) .. (420,21) ;
\draw  [fill={rgb, 255:red, 0; green, 0; blue, 0 }  ,fill opacity=1 ] (417,45) .. controls (417,43.34) and (418.34,42) .. (420,42) .. controls (421.66,42) and (423,43.34) .. (423,45) .. controls (423,46.66) and (421.66,48) .. (420,48) .. controls (418.34,48) and (417,46.66) .. (417,45) -- cycle ;
\draw    (435,11) .. controls (444.88,15.47) and (450,10.72) .. (450,21) ;
\draw    (420,45) -- (420,20) ;
\draw   (440,20) -- (460,20) -- (460,35) -- (440,35) -- cycle ;
\draw  [color={rgb, 255:red, 129; green, 161; blue, 193 }  ,draw opacity=0.75 ][fill={rgb, 255:red, 129; green, 161; blue, 193 }  ,fill opacity=0.25 ] (425,55) .. controls (425,52.24) and (427.24,50) .. (430,50) -- (450,50) .. controls (452.76,50) and (455,52.24) .. (455,55) -- (455,70) .. controls (455,72.76) and (452.76,75) .. (450,75) -- (430,75) .. controls (427.24,75) and (425,72.76) .. (425,70) -- cycle ;
\draw    (420,45) .. controls (410.17,49.43) and (400,44.47) .. (400,55) ;
\draw    (420,45) .. controls (429.88,49.47) and (440,44.72) .. (440,55) ;
\draw    (440,80) -- (440,70) ;
\draw  [fill={rgb, 255:red, 0; green, 0; blue, 0 }  ,fill opacity=1 ] (447,45) .. controls (447,43.34) and (448.34,42) .. (450,42) .. controls (451.66,42) and (453,43.34) .. (453,45) .. controls (453,46.66) and (451.66,48) .. (450,48) .. controls (448.34,48) and (447,46.66) .. (447,45) -- cycle ;
\draw    (420,45) -- (420,40) ;
\draw  [fill={rgb, 255:red, 0; green, 0; blue, 0 }  ,fill opacity=1 ] (432,11) .. controls (432,9.34) and (433.34,8) .. (435,8) .. controls (436.66,8) and (438,9.34) .. (438,11) .. controls (438,12.66) and (436.66,14) .. (435,14) .. controls (433.34,14) and (432,12.66) .. (432,11) -- cycle ;
\draw  [fill={rgb, 255:red, 0; green, 0; blue, 0 }  ,fill opacity=1 ] (437,80) .. controls (437,78.34) and (438.34,77) .. (440,77) .. controls (441.66,77) and (443,78.34) .. (443,80) .. controls (443,81.66) and (441.66,83) .. (440,83) .. controls (438.34,83) and (437,81.66) .. (437,80) -- cycle ;
\draw   (390,55) -- (410,55) -- (410,70) -- (390,70) -- cycle ;
\draw  [color={rgb, 255:red, 129; green, 161; blue, 193 }  ,draw opacity=0.75 ][fill={rgb, 255:red, 129; green, 161; blue, 193 }  ,fill opacity=0.25 ] (385,55.4) .. controls (385,52.69) and (387.19,50.5) .. (389.9,50.5) -- (410.1,50.5) .. controls (412.81,50.5) and (415,52.69) .. (415,55.4) -- (415,70.1) .. controls (415,72.81) and (412.81,75) .. (410.1,75) -- (389.9,75) .. controls (387.19,75) and (385,72.81) .. (385,70.1) -- cycle ;
\draw  [color={rgb, 255:red, 129; green, 161; blue, 193 }  ,draw opacity=0.75 ][fill={rgb, 255:red, 129; green, 161; blue, 193 }  ,fill opacity=0.25 ] (380,52) .. controls (380,48.13) and (383.13,45) .. (387,45) -- (413,45) .. controls (416.87,45) and (420,48.13) .. (420,52) -- (420,73) .. controls (420,76.87) and (416.87,80) .. (413,80) -- (387,80) .. controls (383.13,80) and (380,76.87) .. (380,73) -- cycle ;
\draw    (450,45) -- (450,35) ;

\draw (260,47.5) node    {$=$};
\draw (600,47.5) node    {$;$};
\draw (195,47.5) node  [font=\footnotesize]  {$f$};
\draw (235,47.5) node  [font=\footnotesize]  {$f$};
\draw (315,62.5) node  [font=\footnotesize]  {$f$};
\draw (343.5,62.5) node  [font=\footnotesize]  {$f$};
\draw (370,47.5) node    {$=$};
\draw (470,47.5) node    {$=$};
\draw (495,47.5) node  [font=\footnotesize]  {$f$};
\draw (525,47.5) node  [font=\footnotesize]  {$f$};
\draw (575,42.5) node  [font=\footnotesize]  {$f$};
\draw (550,47.5) node    {$=$};
\draw (290,32.5) node  [font=\footnotesize]  {$f$};
\draw (440,62.5) node  [font=\footnotesize]  {$f$};
\draw (448.5,27.5) node  [font=\footnotesize]  {$f$};
\draw (400,62.5) node  [font=\footnotesize]  {$f$};

\end{tikzpicture}
     \caption{\protect\Normalisation{} is \protect\almostSurely{} idempotent.}%
    \label{diagram:idempotent}
  \end{figure}
\end{proof}

\Normalisation{} is not functorial (and the shaded box is not a functor box).
Functoriality, $\normal{f ⨾ g} = \normal{f} ⨾ \normal{g}$, already fails in \SubStoch{} because composing two normalised morphisms does not necessarily yield a normalised morphism: \(\normal{f}\) may have nonzero probability of returning a value outside the domain of \(\normal{g}\).
What is instead true is that normalising at the end of a computation is the same as normalising at any point, $\normal{f ⨾ g} = \normal{\normal{f} ⨾ g}$.

\begin{prop}[Normalisation precomposes]%
  \label{prop:normalisationPrecomposes}%
  Let $f ፡ X → Y$ and $g ፡ Y → Z$ be two morphisms in a
  \partialMarkovCategory{}, then $\normal{f ⨾ g} = \normal{\normal{f} ⨾ g}$.
\end{prop}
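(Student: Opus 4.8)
The plan is to show that the right-hand side, $\normal{\normal f ⨾ g}$, already satisfies the defining equation of a \normalisation{} of $f ⨾ g$, namely $f ⨾ g = ((f ⨾ g) ⨾ ε) ⊲ \normal{\normal f ⨾ g}$; by the almost-sure uniqueness of \normalisations{} this forces the two sides to agree up to $((f ⨾ g) ⨾ ε)$-\almostSureEquality{}, which is exactly the sense in which the \normalisation{} operator is defined. The first ingredient I would isolate is a simplification of \conditionalComposition{} by a scalar: for any $u ፡ X → I$ and any $k ፡ X → B$ one has $u ⊲ k = φ(u) ⨾ k$. This is immediate from the formula $u ⊲ k = φ(u) ⨾ φ(k) ⨾ π₂$ together with $φ(k) ⨾ π₂ = k$ (counitality), and it generalises the unitality observation $ε_X ⊲ k = k$ recorded after \Cref{def:conditionalExtension}. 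Note this shortcut is special to $A = I$, since only then does $φ(u)$ have codomain $X$.

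The key step is to prove that rescalings compose: for scalars $u, v ፡ X → I$, one has $φ(u) ⨾ φ(v) = φ(u ⊲ v)$. I would deduce this from the associativity of \conditionalComposition{}. Applying the scalar simplification twice gives $u ⊲ (v ⊲ k) = φ(u) ⨾ φ(v) ⨾ k$, whereas $(u ⊲ v) ⊲ k = φ(u ⊲ v) ⨾ k$ (using that $u ⊲ v ፡ X → I$ is again a scalar); the two are equal by associativity, and choosing $k = \id{X}$ yields the claim. Routing the identity through associativity avoids unfolding copies explicitly, although one could instead prove it by a direct commutative-comonoid manipulation.

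With these in hand the assembly is short. Write $s = f ⨾ ε$ and $d' = (\normal f ⨾ g) ⨾ ε$. The defining equation of $\normal f$ gives $f = s ⊲ \normal f = φ(s) ⨾ \normal f$, hence $f ⨾ g = φ(s) ⨾ (\normal f ⨾ g)$; and the defining equation of $\normal{\normal f ⨾ g}$ gives $\normal f ⨾ g = φ(d') ⨾ \normal{\normal f ⨾ g}$. Substituting and composing the two rescalings,
\[
f ⨾ g = φ(s) ⨾ φ(d') ⨾ \normal{\normal f ⨾ g} = φ(s ⊲ d') ⨾ \normal{\normal f ⨾ g}.
\]
It remains to identify the combined factor with the \domainOfDefinition{} of $f ⨾ g$: since $(f ⨾ g) ⨾ ε = φ(s) ⨾ (\normal f ⨾ g) ⨾ ε = φ(s) ⨾ d' = s ⊲ d'$, the previous line reads $f ⨾ g = ((f ⨾ g) ⨾ ε) ⊲ \normal{\normal f ⨾ g}$, which is precisely the \normalisation{} equation for $f ⨾ g$. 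Almost-sure uniqueness of \normalisations{} then closes the argument.

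The main obstacle I expect is the rescaling-composes identity $φ(s) ⨾ φ(d') = φ(s ⊲ d')$: the rest is bookkeeping with the scalar simplification and counitality, but this is where the comonoid structure genuinely enters. Reducing it to the already-established associativity of \conditionalComposition{} is the cleanest route; the only subtlety is that the simplification $u ⊲ k = φ(u) ⨾ k$ requires the left factor to be a scalar, which is exactly why the \domains{} $s$ and $d'$, rather than $f$ itself, are the objects being composed.
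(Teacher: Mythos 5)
Your proof is correct and takes essentially the same route as the paper: you verify that $\normal{\normal{f} ⨾ g}$ satisfies the defining equation of a \normalisation{} of $f ⨾ g$ --- by merging the two rescalings $φ(f ⨾ ε)$ and $φ((\normal{f} ⨾ g) ⨾ ε)$ into the single rescaling by $(f ⨾ g) ⨾ ε$ --- and conclude by \almostSurely{} uniqueness. The paper performs exactly this computation as a chain of string-diagram equalities using the comonoid axioms, where you use the $φ(-)$ calculus and the associativity of \conditionalComposition{}; the content is the same.
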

\begin{proof}
  We reason by the string diagrams in \Cref{diagram:normaliseWhenever}.
    \begin{figure}[h!]

\tikzset{every picture/.style={line width=0.75pt}} %

\begin{tikzpicture}[x=0.75pt,y=0.75pt,yscale=-1,xscale=1]
\draw  [color={rgb, 255:red, 129; green, 161; blue, 193 }  ,draw opacity=0.75 ][fill={rgb, 255:red, 129; green, 161; blue, 193 }  ,fill opacity=0.25 ] (150,35.4) .. controls (150,32.69) and (152.19,30.5) .. (154.9,30.5) -- (175.1,30.5) .. controls (177.81,30.5) and (180,32.69) .. (180,35.4) -- (180,50.1) .. controls (180,52.81) and (177.81,55) .. (175.1,55) -- (154.9,55) .. controls (152.19,55) and (150,52.81) .. (150,50.1) -- cycle ;
\draw  [color={rgb, 255:red, 129; green, 161; blue, 193 }  ,draw opacity=0.75 ][fill={rgb, 255:red, 129; green, 161; blue, 193 }  ,fill opacity=0.25 ] (10,33) .. controls (10,28.58) and (13.58,25) .. (18,25) -- (42,25) .. controls (46.42,25) and (50,28.58) .. (50,33) -- (50,72) .. controls (50,76.42) and (46.42,80) .. (42,80) -- (18,80) .. controls (13.58,80) and (10,76.42) .. (10,72) -- cycle ;
\draw  [color={rgb, 255:red, 129; green, 161; blue, 193 }  ,draw opacity=0.75 ][fill={rgb, 255:red, 129; green, 161; blue, 193 }  ,fill opacity=0.25 ] (15,35.4) .. controls (15,32.69) and (17.19,30.5) .. (19.9,30.5) -- (40.1,30.5) .. controls (42.81,30.5) and (45,32.69) .. (45,35.4) -- (45,50.1) .. controls (45,52.81) and (42.81,55) .. (40.1,55) -- (19.9,55) .. controls (17.19,55) and (15,52.81) .. (15,50.1) -- cycle ;
\draw   (20,35) -- (40,35) -- (40,50) -- (20,50) -- cycle ;
\draw   (60,35) -- (80,35) -- (80,50) -- (60,50) -- cycle ;
\draw    (50,20) -- (50,10) ;
\draw    (50,20) .. controls (36.89,24.43) and (30,19.47) .. (30,30) ;
\draw  [fill={rgb, 255:red, 0; green, 0; blue, 0 }  ,fill opacity=1 ] (47,20) .. controls (47,18.34) and (48.34,17) .. (50,17) .. controls (51.66,17) and (53,18.34) .. (53,20) .. controls (53,21.66) and (51.66,23) .. (50,23) .. controls (48.34,23) and (47,21.66) .. (47,20) -- cycle ;
\draw    (50,20) .. controls (63.17,24.47) and (70,19.72) .. (70,30) ;
\draw    (30,60) -- (30,50) ;
\draw    (30,35) -- (30,30) ;
\draw  [draw opacity=0] (80,30) -- (110,30) -- (110,55) -- (80,55) -- cycle ;
\draw  [fill={rgb, 255:red, 0; green, 0; blue, 0 }  ,fill opacity=1 ] (67,85) .. controls (67,83.34) and (68.34,82) .. (70,82) .. controls (71.66,82) and (73,83.34) .. (73,85) .. controls (73,86.66) and (71.66,88) .. (70,88) .. controls (68.34,88) and (67,86.66) .. (67,85) -- cycle ;
\draw    (70,35) -- (70,30) ;
\draw    (70,85) -- (70,75) ;
\draw   (20,60) -- (40,60) -- (40,75) -- (20,75) -- cycle ;
\draw    (30,90) -- (30,75) ;
\draw   (60,60) -- (80,60) -- (80,75) -- (60,75) -- cycle ;
\draw    (70,60) -- (70,50) ;
\draw  [color={rgb, 255:red, 129; green, 161; blue, 193 }  ,draw opacity=0.75 ][fill={rgb, 255:red, 129; green, 161; blue, 193 }  ,fill opacity=0.25 ] (115,30.4) .. controls (115,27.69) and (117.19,25.5) .. (119.9,25.5) -- (140.1,25.5) .. controls (142.81,25.5) and (145,27.69) .. (145,30.4) -- (145,45.1) .. controls (145,47.81) and (142.81,50) .. (140.1,50) -- (119.9,50) .. controls (117.19,50) and (115,47.81) .. (115,45.1) -- cycle ;
\draw  [color={rgb, 255:red, 129; green, 161; blue, 193 }  ,draw opacity=0.75 ][fill={rgb, 255:red, 129; green, 161; blue, 193 }  ,fill opacity=0.25 ] (110,28) .. controls (110,23.58) and (113.58,20) .. (118,20) -- (142,20) .. controls (146.42,20) and (150,23.58) .. (150,28) -- (150,67) .. controls (150,71.42) and (146.42,75) .. (142,75) -- (118,75) .. controls (113.58,75) and (110,71.42) .. (110,67) -- cycle ;
\draw   (120,30) -- (140,30) -- (140,45) -- (120,45) -- cycle ;
\draw   (155,35) -- (175,35) -- (175,50) -- (155,50) -- cycle ;
\draw    (155,15) -- (155,10) ;
\draw    (155,15) .. controls (141.89,19.43) and (130,14.47) .. (130,25) ;
\draw  [fill={rgb, 255:red, 0; green, 0; blue, 0 }  ,fill opacity=1 ] (152,15) .. controls (152,13.34) and (153.34,12) .. (155,12) .. controls (156.66,12) and (158,13.34) .. (158,15) .. controls (158,16.66) and (156.66,18) .. (155,18) .. controls (153.34,18) and (152,16.66) .. (152,15) -- cycle ;
\draw    (155,15) .. controls (168.17,19.47) and (180,14.72) .. (180,25) ;
\draw    (130,55) -- (130,45) ;
\draw    (130,30) -- (130,25) ;
\draw  [fill={rgb, 255:red, 0; green, 0; blue, 0 }  ,fill opacity=1 ] (162,85) .. controls (162,83.34) and (163.34,82) .. (165,82) .. controls (166.66,82) and (168,83.34) .. (168,85) .. controls (168,86.66) and (166.66,88) .. (165,88) .. controls (163.34,88) and (162,86.66) .. (162,85) -- cycle ;
\draw    (165,85) -- (165,75) ;
\draw   (120,55) -- (140,55) -- (140,70) -- (120,70) -- cycle ;
\draw    (130,90) -- (130,70) ;
\draw   (155,60) -- (175,60) -- (175,75) -- (155,75) -- cycle ;
\draw    (165,60) -- (165,50) ;
\draw  [fill={rgb, 255:red, 0; green, 0; blue, 0 }  ,fill opacity=1 ] (177,25) .. controls (177,23.34) and (178.34,22) .. (180,22) .. controls (181.66,22) and (183,23.34) .. (183,25) .. controls (183,26.66) and (181.66,28) .. (180,28) .. controls (178.34,28) and (177,26.66) .. (177,25) -- cycle ;
\draw    (180,25) .. controls (170.17,29.43) and (165,24.47) .. (165,35) ;
\draw    (180,25) .. controls (189.88,29.47) and (195,24.72) .. (195,35) ;
\draw   (185,35) -- (205,35) -- (205,50) -- (185,50) -- cycle ;
\draw    (195,60) -- (195,50) ;
\draw  [fill={rgb, 255:red, 0; green, 0; blue, 0 }  ,fill opacity=1 ] (192,60) .. controls (192,58.34) and (193.34,57) .. (195,57) .. controls (196.66,57) and (198,58.34) .. (198,60) .. controls (198,61.66) and (196.66,63) .. (195,63) .. controls (193.34,63) and (192,61.66) .. (192,60) -- cycle ;
\draw  [color={rgb, 255:red, 129; green, 161; blue, 193 }  ,draw opacity=0.75 ][fill={rgb, 255:red, 129; green, 161; blue, 193 }  ,fill opacity=0.25 ] (280,34.9) .. controls (280,32.19) and (282.19,30) .. (284.9,30) -- (305.1,30) .. controls (307.81,30) and (310,32.19) .. (310,34.9) -- (310,49.6) .. controls (310,52.31) and (307.81,54.5) .. (305.1,54.5) -- (284.9,54.5) .. controls (282.19,54.5) and (280,52.31) .. (280,49.6) -- cycle ;
\draw  [color={rgb, 255:red, 129; green, 161; blue, 193 }  ,draw opacity=0.75 ][fill={rgb, 255:red, 129; green, 161; blue, 193 }  ,fill opacity=0.25 ] (240,35.4) .. controls (240,32.69) and (242.19,30.5) .. (244.9,30.5) -- (265.1,30.5) .. controls (267.81,30.5) and (270,32.69) .. (270,35.4) -- (270,50.1) .. controls (270,52.81) and (267.81,55) .. (265.1,55) -- (244.9,55) .. controls (242.19,55) and (240,52.81) .. (240,50.1) -- cycle ;
\draw  [color={rgb, 255:red, 129; green, 161; blue, 193 }  ,draw opacity=0.75 ][fill={rgb, 255:red, 129; green, 161; blue, 193 }  ,fill opacity=0.25 ] (235,33) .. controls (235,28.58) and (238.58,25) .. (243,25) -- (267,25) .. controls (271.42,25) and (275,28.58) .. (275,33) -- (275,72) .. controls (275,76.42) and (271.42,80) .. (267,80) -- (243,80) .. controls (238.58,80) and (235,76.42) .. (235,72) -- cycle ;
\draw   (245,35) -- (265,35) -- (265,50) -- (245,50) -- cycle ;
\draw   (285,34.5) -- (305,34.5) -- (305,49.5) -- (285,49.5) -- cycle ;
\draw    (300,15) -- (300,5) ;
\draw    (300,15) .. controls (286.89,19.43) and (275,14.47) .. (275,25) ;
\draw  [fill={rgb, 255:red, 0; green, 0; blue, 0 }  ,fill opacity=1 ] (297,15) .. controls (297,13.34) and (298.34,12) .. (300,12) .. controls (301.66,12) and (303,13.34) .. (303,15) .. controls (303,16.66) and (301.66,18) .. (300,18) .. controls (298.34,18) and (297,16.66) .. (297,15) -- cycle ;
\draw    (300,15) .. controls (313.17,19.47) and (325,14.72) .. (325,25) ;
\draw    (255,60) -- (255,50) ;
\draw  [fill={rgb, 255:red, 0; green, 0; blue, 0 }  ,fill opacity=1 ] (292,84.5) .. controls (292,82.84) and (293.34,81.5) .. (295,81.5) .. controls (296.66,81.5) and (298,82.84) .. (298,84.5) .. controls (298,86.16) and (296.66,87.5) .. (295,87.5) .. controls (293.34,87.5) and (292,86.16) .. (292,84.5) -- cycle ;
\draw    (295,84.5) -- (295,74.5) ;
\draw   (245,60) -- (265,60) -- (265,75) -- (245,75) -- cycle ;
\draw    (255,90) -- (255,75) ;
\draw   (285,59.5) -- (305,59.5) -- (305,74.5) -- (285,74.5) -- cycle ;
\draw    (295,59.5) -- (295,49.5) ;
\draw  [fill={rgb, 255:red, 0; green, 0; blue, 0 }  ,fill opacity=1 ] (272,25) .. controls (272,23.34) and (273.34,22) .. (275,22) .. controls (276.66,22) and (278,23.34) .. (278,25) .. controls (278,26.66) and (276.66,28) .. (275,28) .. controls (273.34,28) and (272,26.66) .. (272,25) -- cycle ;
\draw    (275,25) .. controls (265.17,29.43) and (255,24.47) .. (255,35) ;
\draw    (275,25) .. controls (284.88,29.47) and (295,24.72) .. (295,35) ;
\draw   (315,35) -- (335,35) -- (335,50) -- (315,50) -- cycle ;
\draw    (325,60) -- (325,50) ;
\draw  [fill={rgb, 255:red, 0; green, 0; blue, 0 }  ,fill opacity=1 ] (322,60) .. controls (322,58.34) and (323.34,57) .. (325,57) .. controls (326.66,57) and (328,58.34) .. (328,60) .. controls (328,61.66) and (326.66,63) .. (325,63) .. controls (323.34,63) and (322,61.66) .. (322,60) -- cycle ;
\draw    (325,35) -- (325,25) ;
\draw  [draw opacity=0] (205,30) -- (235,30) -- (235,55) -- (205,55) -- cycle ;
\draw  [color={rgb, 255:red, 129; green, 161; blue, 193 }  ,draw opacity=0.75 ][fill={rgb, 255:red, 129; green, 161; blue, 193 }  ,fill opacity=0.25 ] (365,35.4) .. controls (365,32.69) and (367.19,30.5) .. (369.9,30.5) -- (390.1,30.5) .. controls (392.81,30.5) and (395,32.69) .. (395,35.4) -- (395,50.1) .. controls (395,52.81) and (392.81,55) .. (390.1,55) -- (369.9,55) .. controls (367.19,55) and (365,52.81) .. (365,50.1) -- cycle ;
\draw   (370,35) -- (390,35) -- (390,50) -- (370,50) -- cycle ;
\draw    (395,25) -- (395,15) ;
\draw    (395,25) .. controls (381.89,29.43) and (380,24.47) .. (380,35) ;
\draw  [fill={rgb, 255:red, 0; green, 0; blue, 0 }  ,fill opacity=1 ] (392,25) .. controls (392,23.34) and (393.34,22) .. (395,22) .. controls (396.66,22) and (398,23.34) .. (398,25) .. controls (398,26.66) and (396.66,28) .. (395,28) .. controls (393.34,28) and (392,26.66) .. (392,25) -- cycle ;
\draw    (395,25) .. controls (408.17,29.47) and (410,24.72) .. (410,35) ;
\draw    (380,85) -- (380,75) ;
\draw   (370,60) -- (390,60) -- (390,75) -- (370,75) -- cycle ;
\draw    (380,60) -- (380,50) ;
\draw   (400,35.5) -- (420,35.5) -- (420,50.5) -- (400,50.5) -- cycle ;
\draw    (410,60.5) -- (410,50.5) ;
\draw  [fill={rgb, 255:red, 0; green, 0; blue, 0 }  ,fill opacity=1 ] (407,60.5) .. controls (407,58.84) and (408.34,57.5) .. (410,57.5) .. controls (411.66,57.5) and (413,58.84) .. (413,60.5) .. controls (413,62.16) and (411.66,63.5) .. (410,63.5) .. controls (408.34,63.5) and (407,62.16) .. (407,60.5) -- cycle ;
\draw  [draw opacity=0] (335,30) -- (365,30) -- (365,55) -- (335,55) -- cycle ;
\draw  [draw opacity=0] (420,30) -- (450,30) -- (450,55) -- (420,55) -- cycle ;
\draw   (450,30) -- (470,30) -- (470,45) -- (450,45) -- cycle ;
\draw    (460,80) -- (460,70) ;
\draw   (450,55) -- (470,55) -- (470,70) -- (450,70) -- cycle ;
\draw    (460,55) -- (460,45) ;
\draw    (460,30) -- (460,20) ;
\draw  [draw opacity=0] (470,30) -- (500,30) -- (500,55) -- (470,55) -- cycle ;

\draw (95,42.5) node    {$=$};
\draw (30,42.5) node  [font=\footnotesize]  {$f$};
\draw (30,67.5) node  [font=\footnotesize]  {$g$};
\draw (70,67.5) node  [font=\footnotesize]  {$g$};
\draw (70,42.5) node  [font=\footnotesize]  {$f$};
\draw (130,37.5) node  [font=\footnotesize]  {$f$};
\draw (130,62.5) node  [font=\footnotesize]  {$g$};
\draw (165,67.5) node  [font=\footnotesize]  {$g$};
\draw (165,42.75) node  [font=\footnotesize]  {$f$};
\draw (195,42.5) node  [font=\footnotesize]  {$f$};
\draw (255,42.5) node  [font=\footnotesize]  {$f$};
\draw (255,67.5) node  [font=\footnotesize]  {$g$};
\draw (295,67) node  [font=\footnotesize]  {$g$};
\draw (295,42.25) node  [font=\footnotesize]  {$f$};
\draw (325,42.5) node  [font=\footnotesize]  {$f$};
\draw (220,42.5) node    {$=$};
\draw (380,67.5) node  [font=\footnotesize]  {$g$};
\draw (380,42.75) node  [font=\footnotesize]  {$f$};
\draw (410,43) node  [font=\footnotesize]  {$f$};
\draw (350,42.5) node    {$=$};
\draw (435,42.5) node    {$=$};
\draw (460,62.5) node  [font=\footnotesize]  {$g$};
\draw (460,37.75) node  [font=\footnotesize]  {$f$};
\draw (485,42.5) node    {$;$};

\end{tikzpicture}
         \caption{\protect\Normalisation{} precomposes.}
        \label{diagram:normaliseWhenever}
      \end{figure}
\end{proof}

Probabilistic programming implicitly relies on the result in
\Cref{prop:normalisationPrecomposes} to renormalize the validity at any point of
the computation without changing semantics
\cite[Section~4.1]{staton_et_al_2016}: ``Whenever there is a score, it is good
to renormalize and resample.'' \Normalisation{} does also not influence
conditioning: the \conditionals{} of a \normalisation{} are still
\conditionals{} for the original morphism. \Cref{prop:conditionals-of-programs},
for exact observations, will rely on this fact.

\begin{prop}
  \label{prop:conditionals-of-normalisation}
  Let \m{f ፡ X → Y ⊗ Z} be a morphism in a \partialMarkovCategory{}.
  Then, the \conditionals{} of the \normalisation{} of \m{f} are \conditionals{} of \m{f}.
  In other words, given a \conditional{} of the \normalisation{}, $c_{\normal{f}}$, and a
  \conditional{}, $c_f$, we have
  \[
  c_{\normal{f}} =_{(f ⨾ ε)} c_{f}.
  \]
\end{prop}
\begin{proof}
  Let \m{c_{\normal{f}} ፡ X ⊗ Y → Z} be a \conditional{} of the \normalisation{}
  of \m{f}. Let us first note that the \normalisation{} preserves discarding,
  meaning that $\normal{f ⨾ ε_Y} =_{(f ⨾ ε_{Y ⊗ Z})} \normal{f} ⨾ ε_{Y}$.  We
  use \emph{(i)} that \normalisation{} preserves discarding, \emph{(ii)} the definition
  of \conditional{}, and \emph{(iii)} the definition of \normalisation{}.
  \[
  (f ⨾ ε) ⊲ c_{\normal{f}} 
  = (f ⨾ ε) ⊲ (\normal{f} ⨾ ε) ⊲ c_{\normal{f}} 
  = (f ⨾ ε) ⊲ \normal{f}
  = f.  
  \]
  This concludes the proof because \conditionals{} are \almostSurely{} unique.
\end{proof}

\subsection{Domains, and quasi-total morphisms}

Cartesian restriction categories~\cite{cockett2007restriction} axiomatise
partial deterministic processes—each process is allowed to fail on some inputs.
The \emph{domain of definition} is the \kl{predicate} that records the inputs on
which the process terminates: it runs the morphism and discards the value of the
output, only keeping the information on whether the output was produced.

\PartialMarkovCategories{} axiomatise partial processes with a notion of
nondeterminism. Morphisms are also allowed to fail on some inputs, but they may
do so nondeterministically. We record this information in a \kl{predicate}, as
in cartesian restriction categories. This time, the domain predicate is not
necessarily deterministic: it associates to each input a ``probability of
termination''.

\begin{defi}[Domain of definition]
  \label{def:domain-of-definition}%
  \defining{linkDomainOfDefinition}
  The \intro{domain of definition} of a
  morphism \(f \colon X \to Y\) in a \copyDiscardCategory{} is the
  \kl{predicate} $(f \dcomp \discard_{Y}) ፡ X → I$ on the domain of \(f\).
\end{defi}

\begin{exa}[Domains of definition of substochastic channels]
\label{rem:probability-of-failure}%
\label{rem:domainOfDefinition}%
In \(\subStoch\), the \kl{domain of definition} of a  morphism \(f ፡ X → Y\)
yields the probability of ``success'': the probability that it does not fail,
\[%
(f ⨾ ε)(\ast \given x) = \sum_{y ∈ Y} f(y \given x) = 1 - f(\bot \given x).%
\]%
This fuzzy \kl{predicate} is sharp exactly when \((f ⨾ ε)\) is itself
\deterministic{} (\Cref{fig:domain-of-definition}): in this case, \(f\) factors
through the inclusion \(\distr(Y)+1 \into \subdistr(Y)\).
\end{exa}

We could be tempted to impose that conditionals be \total{} morphisms, but this
is not possible even in \(\SubStoch\): the ``always fail'' map \(⊥ ፡ A → 0\)
cannot have a \total{} \conditional{}. However, we may impose a weaker
condition: that \conditionals{} have a \emph{deterministic domain}—in other
words, that they are \emph{\kl{quasi-total}}. This is a route we have explored
in previous work \cite{dr:evidentialDecision23}; in this manuscript, we study
\kl[quasi-total]{quasi-totality} as a separate property.

\begin{defi}[Quasi-total morphism, {{\cite{dr:evidentialDecision23}}}]
  A morphism in a \kl{partial Markov category}, \m{f ፡ X → Y}, is
  \intro{quasi-total} when its \kl{domain of definition} is \kl{deterministic}.
  In other words, a morphism is \kl{quasi-total} when it satisfies the equation in
  \Cref{fig:domain-of-definition}. 
\end{defi}

\begin{figure}[h!]

\tikzset{every picture/.style={line width=0.75pt}} %

\begin{tikzpicture}[x=0.75pt,y=0.75pt,yscale=-1,xscale=1]
\draw    (130,30) .. controls (120.17,34.43) and (115,34.47) .. (115,45) ;
\draw    (130,30) .. controls (139.88,34.47) and (145,34.72) .. (145,45) ;
\draw   (200,40) -- (220,40) -- (220,55) -- (200,55) -- cycle ;
\draw    (210,40) -- (210,20) ;
\draw    (210,64.5) -- (210,54.5) ;
\draw  [fill={rgb, 255:red, 0; green, 0; blue, 0 }  ,fill opacity=1 ] (207,64.5) .. controls (207,62.84) and (208.34,61.5) .. (210,61.5) .. controls (211.66,61.5) and (213,62.84) .. (213,64.5) .. controls (213,66.16) and (211.66,67.5) .. (210,67.5) .. controls (208.34,67.5) and (207,66.16) .. (207,64.5) -- cycle ;
\draw  [fill={rgb, 255:red, 255; green, 255; blue, 255 }  ,fill opacity=1 ] (105,40) -- (125,40) -- (125,55) -- (105,55) -- cycle ;
\draw  [fill={rgb, 255:red, 255; green, 255; blue, 255 }  ,fill opacity=1 ] (140,40) -- (160,40) -- (160,55) -- (140,55) -- cycle ;
\draw    (130,30) -- (130,20) ;
\draw  [fill={rgb, 255:red, 0; green, 0; blue, 0 }  ,fill opacity=1 ] (127,30) .. controls (127,28.34) and (128.34,27) .. (130,27) .. controls (131.66,27) and (133,28.34) .. (133,30) .. controls (133,31.66) and (131.66,33) .. (130,33) .. controls (128.34,33) and (127,31.66) .. (127,30) -- cycle ;
\draw    (115,65) -- (115,55) ;
\draw    (150,65) -- (150,55) ;
\draw  [draw opacity=0] (165,30) -- (195,30) -- (195,55) -- (165,55) -- cycle ;
\draw  [fill={rgb, 255:red, 0; green, 0; blue, 0 }  ,fill opacity=1 ] (112,65) .. controls (112,63.34) and (113.34,62) .. (115,62) .. controls (116.66,62) and (118,63.34) .. (118,65) .. controls (118,66.66) and (116.66,68) .. (115,68) .. controls (113.34,68) and (112,66.66) .. (112,65) -- cycle ;
\draw  [fill={rgb, 255:red, 0; green, 0; blue, 0 }  ,fill opacity=1 ] (147,65) .. controls (147,63.34) and (148.34,62) .. (150,62) .. controls (151.66,62) and (153,63.34) .. (153,65) .. controls (153,66.66) and (151.66,68) .. (150,68) .. controls (148.34,68) and (147,66.66) .. (147,65) -- cycle ;
\draw  [draw opacity=0] (190,5) -- (230,5) -- (230,80) -- (190,80) -- cycle ;
\draw  [draw opacity=0] (100,5) -- (160,5) -- (160,80) -- (100,80) -- cycle ;

\draw (180,42.5) node    {$=$};
\draw (210,47.5) node  [font=\footnotesize]  {$f$};
\draw (115,47.5) node  [font=\footnotesize]  {$f$};
\draw (150,47.5) node  [font=\footnotesize]  {$f$};
\draw (210,16.6) node [anchor=south] [inner sep=0.75pt]  [font=\tiny]  {$X$};
\draw (130,16.6) node [anchor=south] [inner sep=0.75pt]  [font=\tiny]  {$X$};

\end{tikzpicture}
   \caption{\protect\kl{Quasi-total morphism}.}
  \label{fig:domain-of-definition}
\end{figure}

\begin{lem}[Quasi-total means self-normalizing]
  \label{prop:characterisation-quasi-total}%
  \label{lemma:characterisation-quasi-totality}%
  In a \partialMarkovCategory{}, a morphism \(f ፡ X → Y\) is \kl{quasi-total} if
  and only if it is its own \normalisation{}:
  $f = (f ⨾ ε) ⊲ f$ (\Cref{fig:characterisation-quasitotal}).
  \begin{figure}[ht]

\tikzset{every picture/.style={line width=0.75pt}} %

\begin{tikzpicture}[x=0.75pt,y=0.75pt,yscale=-1,xscale=1]
\draw   (135,40) -- (155,40) -- (155,55) -- (135,55) -- cycle ;
\draw    (145,40) -- (145,20) ;
\draw    (145,70) -- (145,55) ;
\draw   (195,40) -- (215,40) -- (215,55) -- (195,55) -- cycle ;
\draw   (225,40) -- (245,40) -- (245,55) -- (225,55) -- cycle ;
\draw    (220,30) -- (220,20) ;
\draw    (220,30) .. controls (210.17,34.43) and (205,29.47) .. (205,40) ;
\draw  [fill={rgb, 255:red, 0; green, 0; blue, 0 }  ,fill opacity=1 ] (217,30) .. controls (217,28.34) and (218.34,27) .. (220,27) .. controls (221.66,27) and (223,28.34) .. (223,30) .. controls (223,31.66) and (221.66,33) .. (220,33) .. controls (218.34,33) and (217,31.66) .. (217,30) -- cycle ;
\draw    (220,30) .. controls (229.88,34.47) and (235,29.72) .. (235,40) ;
\draw    (205,70) -- (205,55) ;
\draw    (235,65) -- (235,55) ;
\draw  [draw opacity=0] (160,30) -- (190,30) -- (190,55) -- (160,55) -- cycle ;
\draw  [draw opacity=0] (245,30) -- (275,30) -- (275,55) -- (245,55) -- cycle ;
\draw  [fill={rgb, 255:red, 0; green, 0; blue, 0 }  ,fill opacity=1 ] (232,65) .. controls (232,63.34) and (233.34,62) .. (235,62) .. controls (236.66,62) and (238,63.34) .. (238,65) .. controls (238,66.66) and (236.66,68) .. (235,68) .. controls (233.34,68) and (232,66.66) .. (232,65) -- cycle ;

\draw (175,42.5) node    {$=$};
\draw (145,47.5) node  [font=\footnotesize]  {$f$};
\draw (260,42.5) node    {$;$};
\draw (205,47.5) node  [font=\footnotesize]  {$f$};
\draw (233.5,47.5) node  [font=\footnotesize]  {$f$};
\draw (145,16.6) node [anchor=south] [inner sep=0.75pt]  [font=\tiny]  {$X$};
\draw (220,16.6) node [anchor=south] [inner sep=0.75pt]  [font=\tiny]  {$X$};
\draw (145,73.4) node [anchor=north] [inner sep=0.75pt]  [font=\tiny]  {$Y$};
\draw (205,73.4) node [anchor=north] [inner sep=0.75pt]  [font=\tiny]  {$Y$};

\end{tikzpicture}
     \caption{Self-normalizing morphism.}
    \label{fig:characterisation-quasitotal}
  \end{figure}
\end{lem}
\begin{proof}
  Let us show the first implication: whenever the equation in
  \Cref{fig:characterisation-quasitotal} holds, we can discard the outputs on
  both sides of the equation and conclude that the \domain{} must be
  \deterministic{}: $f = (f ⨾ ε) ⊲ f$ implies 
  $$(f ⨾ ε) = (f ⨾ ε) ⊲ (f ⨾ ε) = ν ⨾ ((f ⨾ ε) ⊗ (f ⨾ ε)).$$
  For the opposite implication, we employ the existence of a \normalisation{} of $f$.
  $$
  f = (f ⨾ ε) ⊲ \normal{f} = (f ⨾ ε) ⊲ (f ⨾ ε) ⊲ \normal{f} = (f ⨾ ε) ⊲ f.
  $$
  We conclude that any \deterministicdomain{} morphism is its own \normalisation{}.
\end{proof}

\begin{exa}[Quasi-totality of substochastic channels]
  A substochastic channel, $f ፡ X → \subdistr(Y)$, is \kl{quasi-total} in
  \SubStoch{} if and only if it factors through the inclusion of distributions
  and failure into \subdistributions{}, \(\distr(X)+1 \into \subdistr(X)\).
\end{exa}

\begin{rem}
  All \kl{deterministic} morphisms are
  \kl{quasi-total} morphisms:
  $$(f ⨾ ε) = f ⨾ ν ⨾ (ε ⊗ ε) = ν ⨾ ((f ⨾ ε) ⊗ (f ⨾ ε)).$$
  In particular, all morphisms in a cartesian restriction category are \kl{quasi-total}.
  All \kl{total} morphisms are \kl{quasi-total}:
  \[(f ⨾ ε) = ε = ν ⨾ (ε ⊗ ε) = ν ⨾ ((f ⨾ ε) ⊗ (f ⨾ ε)).\]
  And, in particular, all morphisms in a \MarkovCategory{} are \kl{quasi-total}.
\end{rem}

\begin{rem}[Quasi-total morphisms and quasi-Markov]
  \kl{Quasi-total morphisms} of a \kl{partial Markov category} do not form a
  subcategory in general. Asking them to be closed under composition rules out
  important examples like the Kleisli category of the subdistribution monad. Still, \kl{quasi-total} subdistributions form a category with a different
  composition. In contrast to the \emph{updating semantics} of subdistributions,
  this other composition, ($•$), follows the so-called \emph{black-hole
  semantics} \cite{fritz2009convex,sokolova18:termination}: any possibility of
  failure is equated to failure. 
  \[
  (f • g)(z \given x) = \begin{cases}
    ⊥, & \mbox{ when } \exists y ∈ Y.\ f(y \given x) · g(⊥ \given y) > 0, \\
    f(y \given x) · g(z \given y), & \mbox{ otherwise. }
  \end{cases}
  \]
  The resulting category of partial distributions is the leading example of
  \emph{quasi-Markov categories}: \kl{partial Markov categories} where all
  morphisms are \kl{quasi-total}, and which have found applications in
  categorical probability \cite{shahmohammed25:empirical}.  During this text,
  however, we focus on updating semantics—of which black-hole semantics
  constitute the edge case where all updates fail. For a detailed
  treatment and general construction of stochastic partiality with black-hole
  semantics, we refer the reader to the recent work of Shah Mohammed on
  \emph{quasi-Markov categories} \cite{shahmohammed2025partializations}.
\end{rem}

Previous work~\cite{dr:evidentialDecision23} has also investigated a slightly
stronger definition of \kl{conditionals}, on which they are asked to be
\kl{quasi-total}. In that case, not developed here,
\kl{conditionals} may be defined with a weaker notion of marginal.

\begin{lem}[Marginals from quasi-totality]
  \label{lemma:marginals-qt-conditionals}%
  In a \partialMarkovCategory{}, if a morphism can be factored as the
  conditional composition, $f = (g ⊲ c)$, of an arbitrary $g ፡ X → A$ and a
  \kl{quasi-total} \m{c ፡ A ⊗ X → B}, then this \kl{quasi-total} map is a
  \kl{conditional}, meaning that \(f = (f ⨾ π₁) ⊲ c\).
\end{lem}
\begin{proof}
  Let us prove that it is indeed a \kl{conditional}, $f = (f ⨾ π_1) \mathbin{⊲}
  c$.  We employ \emph{(i)} the factorization of $f$, by assumption; \emph{(ii)}
  associativity; \emph{(iii)} that $c$ is \kl{quasi-total}, as characterized in
  \Cref{lemma:characterisation-quasi-totality}; and \emph{(iv)} again the
  decomposition of $f$.
  \begin{align*}
    & (f ⨾ π₁) ⊲ c \overset{(i)}{=} 
    ((m ⊲ c) ⨾ π₁) ⊲ c \overset{(ii)}{=} 
    m ⊲ (c ⨾ π₁) ⊲ c \overset{(iii)}{=} 
    m ⊲ c = f. \qedhere
  \end{align*}
\end{proof}

After developing the algebra of \kl{conditionals} in \kl{partial Markov
categories}, and considering again the example in \Cref{ex:non-example-markov},
one can conjecture that there may exist a procedure to construct \conditionals{} in Kleisli categories of Maybe monads over \MarkovCategories{}. We show that
this is indeed the case.
\subsection{Kleisli categories of Maybe monads}
\label{sec:KleisliMaybe}

We prove in this section that the Kleisli category of the Maybe monad over a
\MarkovCategory{} has \conditionals{}; these \conditionals{} are inherited from
the base distributive category. In other words, we provide a recipe for constructing
\partialMarkovCategories{} from \MarkovCategories{} with coproducts.

\begin{defi}[{Distributive Markov category, c.f.~{{\cite{kaddar24:interfaces,liell24:imprecise}}}}]%
  \label{def:distributiveMarkov}%
  A \intro{distributive Markov category} is a \MarkovCategory{} with coproducts
  that have \deterministic{} injections, $\iota_1 ፡ X → X + Y$ and $\iota_2 ፡ Y → X + Y$, and such that the canonical distributor, $d ፡ X ⊗ Z + Y ⊗ Z → (X + Y) ⊗ Z$, is an isomorphism.\footnote{Note that we ask
  \protect\MarkovCategories{} to have \conditionals{}: in the literature,
  \distributiveMarkovCategories{} can be found as ``distributive Markov categories with
  conditionals'' \cite{kaddar24:interfaces,liell24:imprecise}.}
\end{defi}

The Maybe monad on a \distributiveMarkovCategory{} is lax-monoidal with respect to the monoidal tensor $(⊗)$. This implies that its Kleisli category is a \copyDiscardCategory{}; let us prove next that it also has \kl{conditionals}, making it a \partialMarkovCategory{}. 

For this proof, we use multiple results that we will detail during the rest of
the section. The main idea is that morphisms of the form \(f \colon X \to (Y
\tensor Z) + 1\) induce morphisms \((f ⨾ 𝔰) ፡ X → (Y+1) ⊗ (Z+1)\). Since the
original category had \kl{conditionals}, we obtain a \kl{marginal} $(f ⨾ 𝔰 ⨾
\pi_{1}) ፡ X → Y + 1$, and a \kl{conditional}, $c ፡ X ⊗ (Y+1) → Z+1$, such that
\((f ⨾ 𝔰 ⨾ \pi_{1}) ⊲ c = f ⨾ 𝔰\). The rest of the proof constructs a
\kl{conditional} in the Kleisli category of the Maybe monad, extending this morphism.

We will need the notation \((-)^{\ast}\) to denote the strong Kleisli
extension~\cite{mcdermott22:strongkleisli} of a strong monad \(T\): this is the
operation lifting a morphism $f ፡ X ⊗ Y → T(Z)$ to a morphism $f^{\ast} ፡ X ⊗
T(Y) → T(Z)$. In particular, we call \emph{strength} to the lifting of the unit,
$\mathrm{str} ፡ X ⊗ T(Y) → T(X ⊗ Y)$.

\begin{lem}%
  \label{lemma:conditionalReplace}
  Let $g ፡ X ⊗ (Y + 1) → Z + 1$ and $f ፡ X → Y + 1$ be two morphisms in a
  \kl{distributive Markov category}. Consider the morphism \(g_{1} = (\id{}
  \tensor \iota_1) \dcomp g \colon X \tensor Y \to Z +1\) and its strong
  Kleisli extension \(g_{1}^{\ast} \colon X \tensor (Y+1) \to Z + 1\). Then,
  $$(f ⊲ g) ⨾ ℓ = (f ⊲ g_{1}^{\ast}) ⨾ ℓ,$$
  where \(ℓ_{Y,Z} \colon (Y+1) \tensor (Z+1) \to (Y \tensor Z) +1\) is the natural transformation making the Maybe monad monoidal.
\end{lem}
\begin{proof}
  We must prove an equality between two morphisms of type $X → Y ⊗ Z + 1$, but we can prove a slightly stronger claim: there are two equal morphisms of type
  $X ⊗ (Y + 1) → Y ⊗ Z + 1$ that, when precomposed by $ν_X ⨾ (\id{} \tensor f)$, give rise to both sides of the equation.
  That is, if we write $ν_{U}^{σ} ፡ X ⊗ U → U ⊗ X ⊗ U$ for copying and symmetry, $ν^{σ}_U = (\id{} ⊗ ν_U) ⨾ (σ_{X,Y} ⊗ \id{U})$, we will prove that
  \[ν_{(Y + 1)}^{σ} ⨾ (\id{} ⊗ g) ⨾ ℓ = ν_{(Y + 1)}^{σ} ⨾ (\id{} ⊗ g_{1}^{\ast}) ⨾ ℓ.\]
  From which the original statement follows,
  \begin{align*}
    (f ⊲ g) ⨾ ℓ
    = ν_X ⨾ f ⨾ ν_{(Y + 1)}^{σ} ⨾ (\id{} ⊗ g) ⨾ ℓ 
     = ν_X ⨾ f ⨾ ν_{(Y + 1)}^{σ} ⨾ (\id{} ⊗ g_{1}^{\ast}) ⨾ ℓ
     = (f ⊲ g_{1}^{\ast}) ⨾ ℓ.
  \end{align*}

  By the universal property of the coproduct, proving an equality between
  morphisms of type $X ⊗ (Y + 1) → Y ⊗ Z + 1$  is the same as proving two
  equalities between the two factor morphisms, of type $X ⊗ Y → Y ⊗ Z + 1$ and
  $X → Y ⊗ Z + 1$. In other words, we must only prove the following two equations,
  \begin{align*}
    & \iota_1 ⨾ ν_{(Y + 1)}^{σ} ⨾ (\id{} ⊗ g) ⨾ ℓ = \iota_1 ⨾ ν_{(Y + 1)}^{σ} ⨾ (\id{} ⊗ g_{1}^{\ast}) ⨾ ℓ; \\
    & \iota_2 ⨾ ν_{(Y + 1)}^{σ} ⨾ (\id{} ⊗ g) ⨾ ℓ = \iota_2 ⨾ ν_{(Y + 1)}^{σ} ⨾ (\id{} ⊗ g_{1}^{\ast}) ⨾ ℓ.
  \end{align*}
  For the first equation, we will prove that both sides are equal to $ν^{σ}_Y ⨾ g^{\#} ⨾ \mathrm{str}$, where \(\mathrm{str} \colon Y \tensor (Z+1) \to (Y \tensor Z) +1\) is the strength morphism of the Maybe monad.
  On the left-hand side, we use \emph{(i)} naturality,
  \emph{(ii)} interchange, \emph{(iii)} the definition of $g^{\#}$, and \emph{(iv)}
  the definition of strength.
  \begin{align*}
    & \iota_1 ⨾ ν_{(Y + 1)}^{σ} ⨾ (\id{} ⊗ g) ⨾ ℓ & \hint{i}{=}  \\
    & ν^{σ}_Y ⨾ (\iota_1 ⊗ \id{} ⊗ \iota_1) ⨾ (\id{} ⊗ g) ⨾ ℓ & \hint{ii}{=} \\
    & ν^{σ}_Y ⨾ (\id{} ⊗ \iota_1) ⨾ (\id{} ⊗ g) ⨾ (\iota_1 ⊗ \id{}) ⨾ ℓ & \hint{iii}{=} \\
    & ν^{σ}_Y ⨾ (\id{} ⊗ g^{\#}) ⨾ (\iota_1 ⊗ \id{}) ⨾ ℓ & \hint{iv}{=}  \\
    & ν^{σ}_Y ⨾ (\id{} ⊗ g^{\#}) ⨾ \mathrm{str}.
  \end{align*}
  On the right-hand side, we use \emph{(i)} determinism of the inclusions in a
  \kl{distributive Markov category}, \emph{(ii)} the definition of $g_{1}^{\ast}$,
  \emph{(iii)} interchange, and \emph{(iv)} the definition of strength.
  \begin{align*}
    & \iota_1 ⨾ ν_{(Y + 1)}^{σ} ⨾ (\id{} ⊗ g_{1}^{\ast}) ⨾ ℓ & \hint{i}{=} \\
    & ν_{Y}^{σ} ⨾ (\iota_1 ⊗ \id{}) ⨾  (\id{} ⊗ \iota_1) ⨾ (\id{} ⊗ g_{1}^{\ast}) ⨾ ℓ & \hint{ii}{=} \\
    & ν_{Y}^{σ} ⨾ (\iota_1 ⊗ \id{}) ⨾ (\id{} ⊗ g^{\#}) ⨾ ℓ & \hint{iii}{=} \\
    & ν_{Y}^{σ} ⨾ (\id{} ⊗ g^{\#}) ⨾ (\iota_1 ⊗ \id{}) ⨾ ℓ & \hint{iv}{=} \\
    & ν_{Y}^{σ} ⨾ (\id{} ⊗ g^{\#}) ⨾ \mathrm{str}.
  \end{align*}

  For the second equation, we will prove that both sides are equal to the inclusion into the failure case, $\discard_{X} \dcomp \iota_2$.
  On the left-hand side, we use \emph{(i)} determinism of the inclusions in a \kl{distributive Markov category}, \emph{(ii)} interchange, \emph{(iii)} definition of the laxator, and \emph{(iv)} totality of morphisms in \MarkovCategories{}.
  \begin{align*}
    & \iota_2 ⨾ ν_{(Y + 1)}^{σ} ⨾ (\id{} ⊗ g) ⨾ ℓ & \hint{i}{=} \\
    & (\iota_2 ⊗ \id{} ⊗ \iota_2) ⨾ (\id{} ⊗ g) ⨾ ℓ & \hint{ii}{=} \\
    & (\id{} \tensor (\iota_{2} \dcomp g)) \dcomp (\iota_{2} \tensor \id{}) \dcomp ℓ & \hint{iii}{=}\\
    & (\id{} \tensor (\iota_{2} \dcomp g)) \dcomp \discard \dcomp \iota_{2} & \hint{iv}{=}\\
    & \discard \dcomp \iota_{2}
  \end{align*}
  On the right hand side, we repeat the exact same reasoning, simply substituting $g$ by $g_{1}^{\ast}$.
  This finishes the proof.
\end{proof}

The next lemma will rewrite the conditionals in the Kleisli category of the Maybe monad in terms of the conditionals in the base category and strong Kleisli extension.

\begin{lem}
  \label{lemma:conditionalKleisli}
  Let $f ፡ X → Y + 1$ and $g ፡ X ⊗ Y → Z + 1$ be morphisms in the Kleisli
  category of the maybe monad over a \kl{distributive Markov category}. Their
  \conditionalComposition{} in the Kleisli category, $(⊲^{\ast})$, can be
  rewritten as
  \[%
  (f ⊲^{\ast} g) = (f ⊲ g^{\ast}) ⨾ ℓ.
  \]
\end{lem}
\begin{proof}
  Let us first write explicitly what \conditionalComposition{} means in the
  Kleisli category of the maybe monad. We write $φ^{\ast}$ for the section to
  marginalization in the Kleisli category, which is
  \[
  φ^{\ast}(f) = ν_X ⨾ (\id{} ⊗ f) ⨾ \mathrm{str}.
  \]
  With this, we rewrite the left-hand side. We use (\emph{i}) the definition of
  \conditionalComposition{}, (\emph{ii}) the section to marginalization,
  (\emph{iii}) composition in the Kleisli category, and (\emph{iv}) the
  definition of projection in the Kleisli category.
  \begin{align*} 
    & f ⊲^{\ast} g & \hint{i}{=} \\
    & φ^{\ast}(f) ⨾^{\ast} φ^{\ast}(g) ⨾^{\ast} \pi^{\ast} & \hint{ii}{=} \\
    & (ν_X ⨾ (\id{} ⊗ f) ⨾ \mathrm{str}) ⨾^{\ast} (ν_{X ⊗ Y} ⨾ (\id{} ⊗ g) ⨾ \mathrm{str}) ⨾^{\ast} \pi^{\ast} & \hint{iii}{=} \\
    & ν_X ⨾ (\id{} ⊗ f) ⨾ \mathrm{str} ⨾ ((ν_{X ⊗ Y} ⨾ (\id{} ⊗ g) ⨾ \mathrm{str}) + 1) ⨾ μ ⨾^{\ast} \pi^{\ast} & \hint{iv}{=} \\
    & ν_X ⨾ (\id{} ⊗ f) ⨾ \mathrm{str} ⨾ ((ν_{X ⊗ Y} ⨾ (\id{} ⊗ g) ⨾ \mathrm{str}) + 1) ⨾ μ ⨾ (\pi + 1). &
  \end{align*}
  We also partially rewrite the right-hand side, by definition of \conditionalComposition{}.
  \begin{align*} 
    (f ⊲ g^{\ast}) ⨾ ℓ =
     ν_X ⨾ (\id{X} ⊗ f) ⨾ ν^{σ}_{Y+1} ⨾ (id{} ⊗ g^{\ast}) ⨾ ℓ.
  \end{align*}

  We note that both sides of the equation can be obtained by precomposing with
  $ν_X ⨾ (\id{} ⊗ f)$. It thus remains to show that
  \[
  \mathrm{str} ⨾ ((ν_{X ⊗ Y} ⨾ (\id{} ⊗ g) ⨾ \mathrm{str}) + 1) ⨾ μ ⨾ (\pi + 1) =
  ν^{σ}_{Y+1} ⨾ (\id{} ⊗  g^{\ast} ) ⨾ ℓ.
  \]
  Again, by the universal property of the coproduct, proving an equality between morphisms of 
  type $X ⊗ (Y + 1) → Y ⊗ Z + 1$ is the same as proving the two equalities obtained by
  precomposition with inclusions.
  
  Let us address the first of these. On the left-hand side, we use \emph{(i)}
  the definition of strength, \emph{(ii)} naturality of the inclusions,
  \emph{(iii)} the definition of the multiplication of the maybe monad,
  \emph{(iv)} interchange, \emph{(v)} and the interaction between copy and
  discard.
  \begin{align*}
    & \iota_1 ⨾ \mathrm{str} ⨾ ((ν_{X ⊗ Y} ⨾ (\id{} ⊗ g) ⨾ \mathrm{str}) + 1) ⨾ μ ⨾ (\pi + 1) & \hint{i}{=} \\
    & \iota_1 ⨾ ((ν_{X ⊗ Y} ⨾ (\id{} ⊗ g) ⨾ \mathrm{str}) + 1) ⨾ μ ⨾ (\pi + 1) & \hint{ii}{=} \\
    & ν_{X ⊗ Y} ⨾ (\id{} ⊗ g) ⨾ \mathrm{str} ⨾ \iota_1 ⨾ μ ⨾ (\pi + 1) & \hint{iii}{=} \\
    & ν_{X ⊗ Y} ⨾  (\id{} ⊗ g) ⨾ \pi ⨾ \mathrm{str} & \hint{iv}{=} \\
    & ν_{X ⊗ Y} ⨾ \pi ⨾ (\id{} ⊗ g) ⨾ \mathrm{str} & \hint{v}{=} \\
    & ν_{Y}^{\sigma} ⨾ (\id{} ⊗ g) ⨾ \mathrm{str}.
  \end{align*}
  On the right hand side, we use \emph{(i)} determinism of the inclusions in a
  \kl{distributive Markov category}; \emph{(ii)} the definition of $g^{\ast}$;
  \emph{(iii)} interchange, and \emph{(iv)} the definition of strength.
  \begin{align*}
    & \iota_1 ⨾ ν^{σ}_{Y+1} ⨾ (\id{} ⊗ g^{\ast}) ⨾ ℓ & \hint{i}{=} \\
    & ν^{σ}_{Y} ⨾ (\iota_1 ⊗ \id{} ⊗ \iota_1) ⨾ g^{\ast} ⨾ ℓ & \hint{ii}{=} \\
    & ν^{σ}_{Y} ⨾ (\iota_1 ⊗ \id{}) ⨾ (\id{} ⊗ g) ⨾ ℓ & \hint{iii}{=} \\
    & ν^{σ}_{Y} ⨾ (\id{} ⊗ g) ⨾ (\iota_1 ⊗ \id{}) ⨾  ℓ & \hint{iv}{=} \\
    & ν^{σ}_{Y} ⨾  (\id{} ⊗ g) ⨾ \mathrm{str}.
  \end{align*}

  Let us address the second of these equations. On the left-hand side, we use
  that \emph{(i)} the definition of strength, \emph{(ii)} naturality of
  inclusions, and \emph{(iii)} the definition of multiplication of the maybe
  monad.
  \begin{align*}
    & \iota_⊥ ⨾ \mathrm{str} ⨾ ((ν_{X ⊗ Y} ⨾ (\id{} ⊗ g) ⨾ \mathrm{str}) + 1) ⨾ μ ⨾ (\pi + 1) & \hint{i}{=} \\
    & \iota_⊥ ⨾ ((ν_{X ⊗ Y} ⨾ (\id{} ⊗ g) ⨾ \mathrm{str}) + 1) ⨾ μ ⨾ (\pi + 1) & \hint{ii}{=} \\
    & \iota_⊥ ⨾ μ ⨾ (\pi + 1) & \hint{iii}{=} \\
    & \iota_⊥.
  \end{align*}
  On the right-hand side, we use \emph{(i)} naturality of the inclusions,
  \emph{(ii)} interchange, and \emph{(iii)} the definition of the laxator.
  \begin{align*}
    & \iota_⊥ ⨾ ν^{σ}_{Y+1} ⨾ (\id{} ⊗ g^{\ast}) ⨾ ℓ  & \hint{i}{=} \\
    & ν^{σ}_{Y} ⨾ (\iota_⊥ ⊗ \id{} ⊗ \iota_⊥) ⨾ (\id{} ⊗ g^{\ast}) ⨾ ℓ  & \hint{ii}{=} \\
    & ν^{σ}_{Y} ⨾ (\iota_⊥ ⊗ \id{}) ⨾ (\id{} ⊗ g^{\ast}) ⨾ (\id{} ⊗ \iota_⊥) ⨾ ℓ  & \hint{iii}{=} \\
    & \iota_⊥.
  \end{align*}

  These two equations conclude the proof.  
\end{proof}

\begin{thm}[Partial Markov categories from Maybe monads]%
  \label{thm:partialMarkovMaybeMonad}
  The Kleisli category of the maybe monad, $(• + 1)$, over a
  \distributiveMarkovCategory{} is a \partialMarkovCategory{}.
\end{thm}
\begin{proof}
  From the deterministic inclusions of a \distributiveMarkovCategory{}, we build
  a \deterministic{} laxator for the maybe monad: $ℓ_{X,Y} ፡ (X + 1) ⊗ (Y + 1) →
  X ⊗ Y + 1.$ This laxator has a section, $𝔰_{X,Y} ⨾ ℓ_{X,Y} = \id{}$.
  
  Given any morphism in the Kleisli category of the maybe monad, $f ፡ X → T(Y ⊗
  Z)$, we can construct a \conditional{} for $(f ⨾ 𝔰_{Y,Z})$: there exists some
  $c ፡ X ⊗ T(Y) → T(Z)$ such that
  $$f ⨾ 𝔰_{Y,Z} = (f ⨾ 𝔰_{Y,Z} ⨾ π₁) ⊲ c.$$%
  Postcomposing with the laxator, we obtain the following equation; by \Cref{lemma:conditionalReplace}, we know
  that the conditional can be replaced by the Kleisli extension of some $d ፡ X ⊗
  Y → T(Z)$
  \begin{align*}
    f & = ((f ⨾ 𝔰_{Y,Z} ⨾ π₁) ⊲ c) ⨾ ℓ_{Y,Z}  \\
    & = ((f ⨾ 𝔰_{Y,Z} ⨾ π₁) ⊲ d^{\ast}) ⨾ ℓ_{Y,Z}.
  \end{align*}
  Finally, by \Cref{lemma:conditionalKleisli}, this is the same as saying that
  $f = (f ⨾ \overline{π}^{\ast}) ⊲^{\ast} d$; in other words, we have
  constructed a \conditional{} in the Kleisli category.
\end{proof}

\subsection{Examples of partial Markov categories}
\label{sec:examples-partial-markov}

\Cref{thm:partialMarkovMaybeMonad} is a source of \kl{partial Markov categories}: from finitely supported distributions, we recover finitely supported subdistributions;
from distribution on standard Borel spaces, we recover subdistributions on standard Borel spaces; from sets and functions, we recover partial functions; from total relations, we recover the category of \emph{may-must} relations~\cite{bonchiSokolovaVignudelli22}.
To illustrate this process, let us detail the example of standard Borel spaces.

\begin{exa}[Standard Borel spaces]
The analogue of \(\subStoch\) for Borel spaces is \(\subBorelStoch\). A morphism
\m{f ፡ X → Y} in \(\subBorelStoch\) represents a stochastic channel that has
some probability of failure, i.e. \(f(x)\) is a \subprobabilityMeasure{}.
\end{exa}

\begin{defi}[Subprobability measure]%
  \defining{linkSubprobabilityMeasure}%
  A \emph{subprobability measure} \(p\) on a measurable space \((X,σ_{X})\) is a
  measurable function such that \(p(X) ≤ 1\).
\end{defi}

\begin{defi}[Category of substochastic kernels between standard Borel spaces]
\label{def:subborelstoch}%
\defining{linksubborelstoch}%
  The objects of \(\subBorelStoch\) are standard Borel spaces, \((X,Σ_{X})\),
  where \(X\) is a set and \(Σ_{X}\) is a \(\sigma\)-algebra on \(X\).
  A morphism \(f ፡ (X,σ_{X}) → (Y,σ_{Y})\) in \(\subBorelStoch\) is a function
  \m{f ፡ Σ_{Y} × X → {[0,1]}} such that, for each measurable set \(T ∈ σ_{Y}\),
  the conditional \m{f(T \given -) ፡ X → {[0,1]}} is a measurable function, and,
  for each element \m{x ∈ X}, the function \m{f(- | x) ፡ Σ_{Y} → {[0,1]}} forms
  a \subprobabilityMeasure{}. Identities and composition behave as in \m{\BorelStoch}.
\end{defi}

\begin{rem}[Panangaden's monad~\cite{panangaden1999}]
\label{rem:subBorelStoch-kleisli-cat}%
\defining{linksubgiry}%
  The category \(\subBorelStoch\) arises as the Kleisli category of
  Panangaden's monad, \(\subGiry\). Its underlying
  functor is the composition of the Giry functor and the Maybe functor:
  \(\subGiry = \Giry(\maybe)\). There is a candidate distributive law
  \(\distributivelaw ፡ \maybe[\Giry(-)] → \Giry(\maybe)\) defined by
  \(\distributivelaw_{X}(\sigma) = \extenddomain{σ}\) and
  \(\distributivelaw_{X}(\bot) = \dirac{\bot}\), where \(\extenddomain{σ}\) is
  the extension of \m{σ} to \(\maybe[X]\) by \(\extenddomain{σ}(\{⊥\}) = 0\).
  \Cref{th:distributive-law-giry-maybe} shows that this is indeed a distributive
  law between the Giry monad and the Maybe monad.
\end{rem}

\begin{prop}
\label{th:distributive-law-giry-maybe}%
  There is a distributive law between the Giry monad and the Maybe monad:
  \(\distributivelaw \colon \maybe[\Giry(-)] → \Giry(\maybe)\).
\end{prop}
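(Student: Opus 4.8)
The plan is to verify that the stated map satisfies the four axioms of Beck for a distributive law of the Maybe monad $\maybe$ over the Giry monad $\Giry$, that is, $\distributivelaw \colon \maybe[\Giry(-)] \to \Giry(\maybe)$, after first checking that each component is a genuine morphism of $\Meas$ and that $\distributivelaw$ is natural. The single observation I would exploit throughout is that $\extenddomain{\sigma}$ is nothing but the pushforward of the probability measure $\sigma$ along the coprojection $X \hookrightarrow \maybe[X]$; equivalently, $\extenddomain{\sigma}(T) = \sigma(T \cap X)$ for measurable $T \subseteq \maybe[X]$, so that in particular $\extenddomain{\sigma}(\{\bot\}) = 0$. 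Under this reading $\distributivelaw_X$ simply extends a genuine distribution so that it places no mass on the failure point, and sends the failure input to $\dirac{\bot}$.

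Naturality and measurability I would dispatch first. For naturality, given measurable $f \colon X \to Y$, the two composites agree on $\bot$ (each returns $\dirac{\bot}$, since $\maybe(f)$ and $\Giry(\maybe(f))$ fix the failure point) and on a measure $\sigma$ they agree because pushing $\sigma$ forward along $\maybe(f)$ and then including coincides with first including and then pushing forward: both are the extension of the pushforward $f_{*}\sigma$. Measurability of $\distributivelaw_X \colon \maybe[\Giry X] \to \Giry(\maybe X)$ I would check against the generating evaluation maps of the Giry $\sigma$-algebra on $\Giry(\maybe X)$: for each measurable $T \subseteq \maybe[X]$ the assignment $\mu \mapsto (\distributivelaw_X\, \mu)(T)$ is measurable because on the $\Giry X$ summand it is $\sigma \mapsto \sigma(T \cap X)$ (measurable by the definition of the Giry $\sigma$-algebra) and on the $\bot$ summand it is the constant $\dirac{\bot}(T)$.

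For the two unit axioms I would compute directly. The Maybe-unit axiom $\distributivelaw_X \circ \eta^{\maybe}_{\Giry X} = \Giry(\eta^{\maybe}_X)$ is immediate from the pushforward description, since both sides send $\sigma$ to $\extenddomain{\sigma}$. The Giry-unit axiom is a one-line case split: a point $x \in X$ is sent by $\maybe(\dirac{(\cdot)})$ to $\dirac{x}$ and then by $\distributivelaw_X$ to $\extenddomain{\dirac{x}} = \dirac{x}$ (now regarded as a measure on $\maybe[X]$), while $\bot$ is sent to $\dirac{\bot}$, and this matches $\dirac{(\cdot)}$ on $\maybe[X]$. The Maybe-multiplication axiom is likewise a finite bookkeeping case analysis over the three kinds of element of $\maybe[\maybe[\Giry X]]$ (a genuine measure, the inner failure, and the outer failure): collapsing the two failure points before applying $\distributivelaw$ yields the same result as applying $\maybe(\distributivelaw_X)$, then $\distributivelaw_{\maybe X}$, then $\Giry(\mu^{\maybe}_X)$, because every route sends a failure to $\dirac{\bot}$ and leaves a genuine measure extended.

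The main obstacle is the Giry-multiplication axiom, the only one with genuine measure-theoretic content. On the failure input both composites collapse to $\dirac{\bot}$ (tracing $\dirac{\bot}$ through pushforward and the Giry multiplication of a Dirac-of-a-Dirac). On a measure $P \in \Giry\Giry X$ the axiom reduces, after unwinding both sides, to the identity
\[
\int_{\sigma \in \Giry X} \extenddomain{\sigma}\; dP(\sigma) \;=\; \extenddomain{\Big(\int_{\sigma \in \Giry X} \sigma\; dP(\sigma)\Big)},
\]
that is, to the statement that domain-extension commutes with the Giry integral. I would prove this by evaluating both measures on an arbitrary measurable $T \subseteq \maybe[X]$: the right-hand side is $(\mu^{\Giry}_X P)(T \cap X)$ by the definitions of $\extenddomain{-}$ and of $\mu^{\Giry}$, while the left-hand side is $\int \extenddomain{\sigma}(T)\, dP = \int \sigma(T \cap X)\, dP$, and the two agree by the definition of the Giry multiplication. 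The only points needing care are that the integrand $\sigma \mapsto \sigma(T \cap X)$ is measurable, so that the integral is defined, and that the two steps of the right-hand composite ($\Giry(\distributivelaw_X)$ followed by $\mu^{\Giry}_{\maybe X}$) really do reassemble into $\int \extenddomain{\sigma}\, dP$; both follow from the change-of-variables formula for pushforward measures together with the fact that $\extenddomain{P}$ is concentrated on the $\Giry X$ summand of $\maybe[\Giry X]$. This measure-theoretic commutation is the crux, and once it is in hand the remaining verifications are routine.
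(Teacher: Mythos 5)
Your proof is correct, but it takes a genuinely different route from the paper's. You verify the Beck axioms for $\distributivelaw \colon \maybe[\Giry(-)] \to \Giry(\maybe)$ head-on: naturality, measurability of the components, the two unit laws, and the two multiplication laws, with the Giry-multiplication axiom correctly isolated as the only step with real measure-theoretic content and reduced to the commutation of domain-extension with the Giry integral (checked by evaluating both measures on a measurable $T$, using that $\sigma \mapsto \sigma(T \cap X)$ is measurable by the definition of the Giry $\sigma$-algebra). The paper instead argues in the converse direction of Beck's correspondence: it cites Panangaden's result that the composite functor $\Giry(\maybe)$ is already a monad, checks that this monad's unit and multiplication factor as the expected composites through the candidate $\distributivelaw$ (verifying the multiplication by an explicit integral computation on $p \in \Giry(\maybe[\Giry(\maybe[X])])$), and concludes that these compatibility conditions force $\distributivelaw$ to be a distributive law. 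The paper's route is shorter but leans on the external fact that the subprobability functor is a monad and is rather terse about which compatibility conditions of Beck's theorem are being invoked; your route is self-contained and makes the measure-theoretic crux explicit, at the cost of carrying out all four axiom verifications by hand. Both are sound.
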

\begin{proof}
  We will deduce the existence of the distributive law from the fact that the
  composite functor is a monad with the multiplication and units given by the
  potential distributive law.

  The composite functor \(\Giry(\maybe) = \subGiry\) is a
  monad~\cite{panangaden1999}, with multiplication and unit given by
  compositions of the multiplications and units of \(\Giry\) and \((\maybe)\).
  For the units, this is easy to see as the unit of \(\Giry(\maybe)\) is just
  the inclusion of the unit of \(\Giry\), and the inclusion \(\Giry \to
  \Giry(\maybe)\) is given by the unit of \((\maybe)\). For the multiplications,
  we can check that \(μ = (\id{} ⊗ \distributivelaw ⊗ \id{}) ⨾ (μ_{1} ⊗
  μ_{2})\). In fact, let \m{p ∈ \Giry(\maybe[\Giry(\maybe[X])])}. Then,
  \begin{align*}
    & ((\id{} ⊗ \distributivelaw_{X} ⊗ \id{}) ⨾ (\monadmultiplication_{1} ⊗ \monadmultiplication_{2}))(p) =
    ∫_{τ ∈ \Giry(\maybe[X])} τ(-) ⋅ p(dτ),
  \end{align*}
  which corresponds with the definition of the multiplication of \(\Giry(\maybe)\).
  The components defined in \Cref{rem:subBorelStoch-kleisli-cat} form a natural transformation.
  These conditions already imply that there is in fact a distributive law between \(\Giry\) and \((\maybe)\).
\end{proof}

\begin{prop}[Subprobability kernels form a partial Markov category]
  By \Cref{thm:partialMarkovMaybeMonad}, $\subBorelStoch$ is a \kl{partial Markov
  category}.
\end{prop}

\begin{exa}[Relations]
  The category of sets and relations is a partial Markov category, but not as an instance of \Cref{thm:partialMarkovMaybeMonad}.
  In fact, all cartesian bicategories of relations~\cite{CARBONI198711} are partial Markov categories~\cite{2025order}.
  For a relation \(f \colon X \to Y \times Z\), its conditional on \(Y\) is the relation \(c \colon X \times Y \to Z\) defined by \(((x,y),z) \in c\) if and only if \((x,(y,z)) \in f\).
\end{exa}

\begin{exa}[Partial Gaussians]
  \label{exa:gaussians}
  The subcategory of \(\subBorelStoch\) generated by Gaussian kernels and the failure map is equivalent to a prop similar to that of Gaussian kernels, \(\cat{Gauss}\) (\Cref{def:gauss}).
  Its morphisms \(n \to m\) are either \emph{(i)} a pair \((f,p)\) of a morphism \(f \colon n \to m\) in \(\cat{Gauss}\) and a \emph{validity} \(p \in (0,1]\), or \(\bot\) representing the subdistribution that is always \(0\).
  Compositions and monoidal products of two nonzero morphisms are inherited from \(\cat{Gauss}\) and multiply the validities: \((f,p) \dcomp (g,q) = (f \dcomp g, p \cdot q)\) and \((f,p) \tensor (g,q) = (f \tensor g, p \cdot q)\).
  Composition and monoidal product with \(\bot\) on either side equal \(\bot\).
\end{exa}
\section{Discrete Partial Markov Categories}%
\label{sec:discretePartialMarkovCategories}

We shall now introduce \emph{\discretePartialMarkovCategories{}}, a refinement
of \partialMarkovCategories{} that allows the encoding of
\emph{constraints}.

Discrete cartesian restriction categories~\cite{cockett2012range2} are a
refinement of cartesian restriction categories precisely allowing
\emph{constraints}: a map may fail if some conditions are not satisfied. Our
observation is that a similar refinement can be applied to
\partialMarkovCategories{} to obtain \discretePartialMarkovCategories{}. These
provide a setting in which it is possible to \emph{(i)} constrain the behaviour
of a morphism, via an abstract analogue of Bayesian updates; and \emph{(ii)}
reason with these constrained morphisms.

Encoding constraints requires \emph{comparator maps} that interact nicely with
copy and discard (\Cref{diagram-partial-frobenius}, see also
\Cref{rem:reading-partial-frobenius}). A \emph{comparator} declares that some
constraint—usually coming from an observation—must be satisfied by a
probabilistic process.

\begin{defi}[Comparator]
\defining{linkcompare}%
\defining{linkcomparators}%
A \copyDiscardCategory{} \m{ℂ} has \emph{comparators} if every object \(X\) has
a morphism, \m{\compare_X ፡ X ⊗ X → X}, that is uniform (meaning $μ_{X ⊗ Y} = (\id{X} ⊗ σ_{X,Y} ⊗ \id{Y}) ⨾ (μ_X ⊗ μ_Y)$ and $μ_I = \id{I}$), commutative,
associative, satisfies the Frobenius axiom, and the special axiom. In other
words, it is a partial Frobenius monoid \cite{di_liberti_nester_2021}, meaning that it satisfies the axioms in \Cref{diagram-partial-frobenius}.
\begin{figure}[h!]

\tikzset{every picture/.style={line width=0.75pt}} %

\begin{tikzpicture}[x=0.75pt,y=0.75pt,yscale=-1,xscale=1]
\draw    (70,85) -- (70,100) ;
\draw    (85,115) -- (85,125) ;
\draw    (85,115) .. controls (75.17,110.57) and (70,110.53) .. (70,100) ;
\draw  [fill={rgb, 255:red, 0; green, 0; blue, 0 }  ,fill opacity=1 ] (82,115) .. controls (82,116.66) and (83.34,118) .. (85,118) .. controls (86.66,118) and (88,116.66) .. (88,115) .. controls (88,113.34) and (86.66,112) .. (85,112) .. controls (83.34,112) and (82,113.34) .. (82,115) -- cycle ;
\draw    (85,115) .. controls (94.88,110.53) and (100,110.28) .. (100,100) ;
\draw    (100,100) .. controls (90.17,95.57) and (85,95.53) .. (85,85) ;
\draw  [fill={rgb, 255:red, 0; green, 0; blue, 0 }  ,fill opacity=1 ] (97,100) .. controls (97,101.66) and (98.34,103) .. (100,103) .. controls (101.66,103) and (103,101.66) .. (103,100) .. controls (103,98.34) and (101.66,97) .. (100,97) .. controls (98.34,97) and (97,98.34) .. (97,100) -- cycle ;
\draw    (100,100) .. controls (109.88,95.53) and (115,95.28) .. (115,85) ;
\draw    (175,115) -- (175,125) ;
\draw    (175,115) .. controls (165.17,110.57) and (160,110.53) .. (160,100) ;
\draw  [fill={rgb, 255:red, 0; green, 0; blue, 0 }  ,fill opacity=1 ] (172,115) .. controls (172,116.66) and (173.34,118) .. (175,118) .. controls (176.66,118) and (178,116.66) .. (178,115) .. controls (178,113.34) and (176.66,112) .. (175,112) .. controls (173.34,112) and (172,113.34) .. (172,115) -- cycle ;
\draw    (175,115) .. controls (184.88,110.53) and (190,110.28) .. (190,100) ;
\draw    (190,85) -- (190,100) ;
\draw    (160,100) .. controls (150.17,95.57) and (145,95.53) .. (145,85) ;
\draw  [fill={rgb, 255:red, 0; green, 0; blue, 0 }  ,fill opacity=1 ] (157,100) .. controls (157,101.66) and (158.34,103) .. (160,103) .. controls (161.66,103) and (163,101.66) .. (163,100) .. controls (163,98.34) and (161.66,97) .. (160,97) .. controls (158.34,97) and (157,98.34) .. (157,100) -- cycle ;
\draw    (160,100) .. controls (169.88,95.53) and (175,95.28) .. (175,85) ;
\draw    (235,115) -- (235,125) ;
\draw    (235,115) .. controls (225.17,110.57) and (220.29,110.69) .. (220,105) ;
\draw  [fill={rgb, 255:red, 0; green, 0; blue, 0 }  ,fill opacity=1 ] (232,115) .. controls (232,116.66) and (233.34,118) .. (235,118) .. controls (236.66,118) and (238,116.66) .. (238,115) .. controls (238,113.34) and (236.66,112) .. (235,112) .. controls (233.34,112) and (232,113.34) .. (232,115) -- cycle ;
\draw    (235,115) .. controls (244.88,110.53) and (249.86,110.4) .. (250,105) ;
\draw    (220,105) .. controls (220.5,94.57) and (249.83,95.23) .. (250,85) ;
\draw    (250,105) .. controls (250.5,94.57) and (219.83,95.23) .. (220,85) ;
\draw  [draw opacity=0] (250.01,115) -- (280.01,115) -- (280.01,90) -- (250.01,90) -- cycle ;
\draw    (295.01,110) -- (295.01,125) ;
\draw    (295.01,110) .. controls (285.17,105.57) and (280.01,105.53) .. (280.01,95) ;
\draw  [fill={rgb, 255:red, 0; green, 0; blue, 0 }  ,fill opacity=1 ] (292.01,110) .. controls (292.01,111.66) and (293.35,113) .. (295.01,113) .. controls (296.66,113) and (298.01,111.66) .. (298.01,110) .. controls (298.01,108.34) and (296.66,107) .. (295.01,107) .. controls (293.35,107) and (292.01,108.34) .. (292.01,110) -- cycle ;
\draw    (295.01,110) .. controls (304.88,105.53) and (310.01,105.28) .. (310.01,95) ;
\draw    (280.01,85) -- (280.01,95) ;
\draw    (310.01,85) -- (310.01,95) ;
\draw    (580,95) -- (580,85) ;
\draw    (580,95) .. controls (570.17,99.43) and (570,99.47) .. (570,110) ;
\draw  [fill={rgb, 255:red, 0; green, 0; blue, 0 }  ,fill opacity=1 ] (577,95) .. controls (577,93.34) and (578.34,92) .. (580,92) .. controls (581.66,92) and (583,93.34) .. (583,95) .. controls (583,96.66) and (581.66,98) .. (580,98) .. controls (578.34,98) and (577,96.66) .. (577,95) -- cycle ;
\draw    (580,95) .. controls (589.88,99.47) and (590,94.72) .. (590,105) ;
\draw    (600,115) .. controls (590.17,110.57) and (590,115.53) .. (590,105) ;
\draw  [fill={rgb, 255:red, 0; green, 0; blue, 0 }  ,fill opacity=1 ] (597,115) .. controls (597,116.66) and (598.34,118) .. (600,118) .. controls (601.66,118) and (603,116.66) .. (603,115) .. controls (603,113.34) and (601.66,112) .. (600,112) .. controls (598.34,112) and (597,113.34) .. (597,115) -- cycle ;
\draw    (600,115) .. controls (609.88,110.53) and (610,110.28) .. (610,100) ;
\draw    (610,100) -- (610,85) ;
\draw    (570,125) -- (570,110) ;
\draw    (600,125) -- (600,115) ;
\draw    (470,95) -- (470,85) ;
\draw    (470,95) .. controls (479.83,99.43) and (480,99.47) .. (480,110) ;
\draw  [fill={rgb, 255:red, 0; green, 0; blue, 0 }  ,fill opacity=1 ] (473,95) .. controls (473,93.34) and (471.66,92) .. (470,92) .. controls (468.34,92) and (467,93.34) .. (467,95) .. controls (467,96.66) and (468.34,98) .. (470,98) .. controls (471.66,98) and (473,96.66) .. (473,95) -- cycle ;
\draw    (470,95) .. controls (460.13,99.47) and (460,94.72) .. (460,105) ;
\draw    (450,115) .. controls (459.83,110.57) and (460,115.53) .. (460,105) ;
\draw  [fill={rgb, 255:red, 0; green, 0; blue, 0 }  ,fill opacity=1 ] (453,115) .. controls (453,116.66) and (451.66,118) .. (450,118) .. controls (448.34,118) and (447,116.66) .. (447,115) .. controls (447,113.34) and (448.34,112) .. (450,112) .. controls (451.66,112) and (453,113.34) .. (453,115) -- cycle ;
\draw    (450,115) .. controls (440.13,110.53) and (440,110.28) .. (440,100) ;
\draw    (440,100) -- (440,85) ;
\draw    (480,125) -- (480,110) ;
\draw    (450,125) -- (450,115) ;
\draw    (525,100) .. controls (515.17,95.57) and (510,95.53) .. (510,85) ;
\draw  [fill={rgb, 255:red, 0; green, 0; blue, 0 }  ,fill opacity=1 ] (522,100) .. controls (522,101.66) and (523.34,103) .. (525,103) .. controls (526.66,103) and (528,101.66) .. (528,100) .. controls (528,98.34) and (526.66,97) .. (525,97) .. controls (523.34,97) and (522,98.34) .. (522,100) -- cycle ;
\draw    (525,100) .. controls (534.88,95.53) and (540,95.28) .. (540,85) ;
\draw    (525,110) .. controls (515.17,114.43) and (510,114.47) .. (510,125) ;
\draw  [fill={rgb, 255:red, 0; green, 0; blue, 0 }  ,fill opacity=1 ] (522,110) .. controls (522,108.34) and (523.34,107) .. (525,107) .. controls (526.66,107) and (528,108.34) .. (528,110) .. controls (528,111.66) and (526.66,113) .. (525,113) .. controls (523.34,113) and (522,111.66) .. (522,110) -- cycle ;
\draw    (525,110) .. controls (534.88,114.47) and (540,114.72) .. (540,125) ;
\draw    (525,110) -- (525,100) ;
\draw  [draw opacity=0] (480,95) -- (510,95) -- (510,115) -- (480,115) -- cycle ;
\draw  [draw opacity=0] (540,95) -- (570,95) -- (570,115) -- (540,115) -- cycle ;
\draw  [draw opacity=0] (115,115) -- (145,115) -- (145,90) -- (115,90) -- cycle ;
\draw    (355,115) .. controls (345.17,110.57) and (340,115.53) .. (340,105) ;
\draw  [fill={rgb, 255:red, 0; green, 0; blue, 0 }  ,fill opacity=1 ] (352,115) .. controls (352,116.66) and (353.34,118) .. (355,118) .. controls (356.66,118) and (358,116.66) .. (358,115) .. controls (358,113.34) and (356.66,112) .. (355,112) .. controls (353.34,112) and (352,113.34) .. (352,115) -- cycle ;
\draw    (355,115) .. controls (364.88,110.53) and (370,115.28) .. (370,105) ;
\draw    (355,95) .. controls (345.17,99.43) and (340,94.47) .. (340,105) ;
\draw  [fill={rgb, 255:red, 0; green, 0; blue, 0 }  ,fill opacity=1 ] (352,95) .. controls (352,93.34) and (353.34,92) .. (355,92) .. controls (356.66,92) and (358,93.34) .. (358,95) .. controls (358,96.66) and (356.66,98) .. (355,98) .. controls (353.34,98) and (352,96.66) .. (352,95) -- cycle ;
\draw    (355,95) .. controls (364.88,99.47) and (370,94.72) .. (370,105) ;
\draw    (355,125) -- (355,115) ;
\draw    (355,85) -- (355,95) ;
\draw  [draw opacity=0] (370,115) -- (400,115) -- (400,90) -- (370,90) -- cycle ;
\draw    (405,85) -- (405,125) ;
\draw  [draw opacity=0] (310,120) -- (340,120) -- (340,90) -- (310,90) -- cycle ;
\draw  [draw opacity=0] (410,120) -- (440,120) -- (440,90) -- (410,90) -- cycle ;

\draw (440,81.6) node [anchor=south] [inner sep=0.75pt]  [font=\tiny]  {$X$};
\draw (470,81.6) node [anchor=south] [inner sep=0.75pt]  [font=\tiny]  {$X$};
\draw (510,81.6) node [anchor=south] [inner sep=0.75pt]  [font=\tiny]  {$X$};
\draw (265.01,102.5) node    {$=$};
\draw (450,128.4) node [anchor=north] [inner sep=0.75pt]  [font=\tiny]  {$X$};
\draw (480,128.4) node [anchor=north] [inner sep=0.75pt]  [font=\tiny]  {$X$};
\draw (495,105) node    {$=$};
\draw (555,105) node    {$=$};
\draw (540,81.6) node [anchor=south] [inner sep=0.75pt]  [font=\tiny]  {$X$};
\draw (580,81.6) node [anchor=south] [inner sep=0.75pt]  [font=\tiny]  {$X$};
\draw (610,81.6) node [anchor=south] [inner sep=0.75pt]  [font=\tiny]  {$X$};
\draw (540,128.4) node [anchor=north] [inner sep=0.75pt]  [font=\tiny]  {$X$};
\draw (510,128.4) node [anchor=north] [inner sep=0.75pt]  [font=\tiny]  {$X$};
\draw (570,128.4) node [anchor=north] [inner sep=0.75pt]  [font=\tiny]  {$X$};
\draw (600,128.4) node [anchor=north] [inner sep=0.75pt]  [font=\tiny]  {$X$};
\draw (295.01,128.4) node [anchor=north] [inner sep=0.75pt]  [font=\tiny]  {$X$};
\draw (235,128.4) node [anchor=north] [inner sep=0.75pt]  [font=\tiny]  {$X$};
\draw (220,81.6) node [anchor=south] [inner sep=0.75pt]  [font=\tiny]  {$X$};
\draw (250,81.6) node [anchor=south] [inner sep=0.75pt]  [font=\tiny]  {$X$};
\draw (280.01,81.6) node [anchor=south] [inner sep=0.75pt]  [font=\tiny]  {$X$};
\draw (310.01,81.6) node [anchor=south] [inner sep=0.75pt]  [font=\tiny]  {$X$};
\draw (130,102.5) node    {$=$};
\draw (70,81.6) node [anchor=south] [inner sep=0.75pt]  [font=\tiny]  {$X$};
\draw (85,81.6) node [anchor=south] [inner sep=0.75pt]  [font=\tiny]  {$X$};
\draw (115,81.6) node [anchor=south] [inner sep=0.75pt]  [font=\tiny]  {$X$};
\draw (145,81.6) node [anchor=south] [inner sep=0.75pt]  [font=\tiny]  {$X$};
\draw (175,81.6) node [anchor=south] [inner sep=0.75pt]  [font=\tiny]  {$X$};
\draw (190,81.6) node [anchor=south] [inner sep=0.75pt]  [font=\tiny]  {$X$};
\draw (175,128.4) node [anchor=north] [inner sep=0.75pt]  [font=\tiny]  {$X$};
\draw (85,128.4) node [anchor=north] [inner sep=0.75pt]  [font=\tiny]  {$X$};
\draw (385,102.5) node    {$=$};
\draw (405,128.4) node [anchor=north] [inner sep=0.75pt]  [font=\tiny]  {$X$};
\draw (355,128.4) node [anchor=north] [inner sep=0.75pt]  [font=\tiny]  {$X$};
\draw (355,81.6) node [anchor=south] [inner sep=0.75pt]  [font=\tiny]  {$X$};
\draw (405,81.6) node [anchor=south] [inner sep=0.75pt]  [font=\tiny]  {$X$};

\end{tikzpicture}
   \caption{Axioms of a partial Frobenius monoid.}%
  \label{diagram-partial-frobenius}%
\end{figure}
\end{defi}

\begin{defi}[Discrete partial Markov category]
  \label{def:discrete-partial-markov-cat}%
  \defining{linkDiscretePartialMarkov}%
  A \emph{discrete partial Markov category} is a \partialMarkovCategory{} with
  \comparators{}. In other words, it is \copyDiscardCategory{} with
  \conditionals{} and \comparators{}.
\end{defi}

\begin{exa}[\protect\SubStoch{} is a \protect\discretePartialMarkovCategory{}]
  The Kleisli category of the finitary subdistribution monad, \(\subStoch\), is
  a \discretePartialMarkovCategory{}. The comparator \(μ_{X} ፡ X ⊗ X → X\) is
  given by
  \[μ_{X}(x \given x_{1},x_{2}) =
    \begin{cases} 
      1, & x=x_{1}=x_{2}; \\ 
      0, & \text{otherwise}.
    \end{cases}\]%
  The comparator, copy, and discard morphisms of \(\subStoch\) are lifted from
  the category of partial functions via the inclusion \(\inclusion ፡ \Par \into
  \subStoch\) given by post-composition with the unit of the finitary
  distribution monad. The comparator in \(\Par\) satisfies the axioms in
  \Cref{diagram-partial-frobenius}: copying a resource and then checking that
  the two copies coincide should be the identity process; checking equality of
  two resources and then copying them is the same as copying one resource if it
  coincides with the other one. By functoriality of the inclusion, \(\inclusion ፡
  \Par \into \subStoch\), the comparator satisfies these same axioms in
  \(\subStoch\).
\end{exa}

\begin{rem}
  \label{rem:reading-partial-frobenius}
  Thanks to the special Frobenius axioms (\Cref{diagram-partial-frobenius}),
  string diagrams in \(\subStoch\) keep the same intuitive reading as
  in \(\Stoch\): the value of a morphism is obtained by multiplying
  the values of all its components and summing on the wires that are not inputs
  nor outputs. For example, the value of the morphism
  \begin{center}
    \smallskip

\tikzset{every picture/.style={line width=0.75pt}} %

\begin{tikzpicture}[x=0.75pt,y=0.75pt,yscale=-1,xscale=1]
\draw   (245,35) -- (265,35) -- (265,50) -- (245,50) -- cycle ;
\draw    (255,35) -- (255,25) ;
\draw    (255,60) -- (255,50) ;
\draw   (350,20) -- (370,20) -- (370,35) -- (350,35) -- cycle ;
\draw    (360,20) -- (360,15) ;
\draw    (360,40) -- (360,35) ;
\draw    (360,40) .. controls (350.17,44.43) and (345,39.47) .. (345,50) ;
\draw  [fill={rgb, 255:red, 0; green, 0; blue, 0 }  ,fill opacity=1 ] (357,40) .. controls (357,38.34) and (358.34,37) .. (360,37) .. controls (361.66,37) and (363,38.34) .. (363,40) .. controls (363,41.66) and (361.66,43) .. (360,43) .. controls (358.34,43) and (357,41.66) .. (357,40) -- cycle ;
\draw    (360,40) .. controls (369.88,44.47) and (375,39.72) .. (375,50) ;
\draw    (330,70) -- (330,60) ;
\draw    (330,60) .. controls (320.17,55.57) and (315,60.53) .. (315,50) ;
\draw  [fill={rgb, 255:red, 0; green, 0; blue, 0 }  ,fill opacity=1 ] (327,60) .. controls (327,61.66) and (328.34,63) .. (330,63) .. controls (331.66,63) and (333,61.66) .. (333,60) .. controls (333,58.34) and (331.66,57) .. (330,57) .. controls (328.34,57) and (327,58.34) .. (327,60) -- cycle ;
\draw    (330,60) .. controls (339.88,55.53) and (345,60.28) .. (345,50) ;
\draw  [fill={rgb, 255:red, 0; green, 0; blue, 0 }  ,fill opacity=1 ] (327,70) .. controls (327,68.34) and (328.34,67) .. (330,67) .. controls (331.66,67) and (333,68.34) .. (333,70) .. controls (333,71.66) and (331.66,73) .. (330,73) .. controls (328.34,73) and (327,71.66) .. (327,70) -- cycle ;
\draw   (305,35) -- (325,35) -- (325,50) -- (305,50) -- cycle ;
\draw   (365,50) -- (385,50) -- (385,65) -- (365,65) -- cycle ;
\draw    (375,75) -- (375,65) ;
\draw  [draw opacity=0] (265,30) -- (305,30) -- (305,55) -- (265,55) -- cycle ;

\draw (255,21.6) node [anchor=south] [inner sep=0.75pt]  [font=\tiny]  {$X$};
\draw (285,42.5) node  [font=\footnotesize]  {$=$};
\draw (255,42.5) node  [font=\footnotesize]  {$f$};
\draw (315,42.5) node  [font=\footnotesize]  {$k$};
\draw (360,27.5) node  [font=\footnotesize]  {$g$};
\draw (375,57.5) node  [font=\footnotesize]  {$h$};
\draw (360,11.6) node [anchor=south] [inner sep=0.75pt]  [font=\tiny]  {$X$};
\draw (255,63.4) node [anchor=north] [inner sep=0.75pt]  [font=\tiny]  {$Z$};
\draw (375,78.4) node [anchor=north] [inner sep=0.75pt]  [font=\tiny]  {$Z$};

\end{tikzpicture}
     \smallskip
  \end{center}
  is given by the formula 
  \[f(z \given x) = \sum_{y ∈ Y} 
      g(y \given x) · h(z \given y) · k(y).\]
\end{rem}

\begin{exa}[Naïve comparators in continuous probability]
\label{ex:borel-subdistributions-no-comparator}
  The category \(\subBorelStoch\) can be endowed with a naïve comparator,
  \(\compare_{X} ፡ (X, Σ_{X}) ⊗ (X, Σ_{X}) → (X, Σ_{X})\), that satisfies the
  special Frobenius axioms:
  \[μ_{X}(A, x, y) =
  \begin{cases}
    1, & \mbox{ if } x=y \in A; \\
    0, & \text{otherwise}. %
  \end{cases}
  \]%
  This definition gives a measurable function \(μ_{X}(A \given -,-) ፡ X × X →
  {[0,1]}\) if and only if the diagonal \(Δ_{X} = \{(x,x) \st x ∈ X\}\) belongs
  to the product \(σ\)-algebra, \(Σ_{X} × Σ_{X}\), which holds for standard
  Borel spaces. However, this naive comparator does not behave as we
  would like it to: the set \(\{x_{0}\}\) has measure \(0\), so comparing with
  \(x_{0}\) yields the subdistribution with measure \(0\), which cannot be
  renormalised. We address this problem in \Cref{sec:exactObservations}.
\end{exa}

\subsection{Bayes' Theorem}

\BayesTheorem{} prescribes how to update one's belief in light of new evidence. Briefly, one observes evidence \(y ∈ Y\) from a prior distribution \(σ\) on \(X\) through a channel \(c ፡ X → Y\). The updated distribution is given by evaluating the Bayesian inversion of the channel \(c\) on the new observation \(y\). This formulation suggests that Bayes' theorem is intrinsically about exact observations.
\Cref{th:bayes-exact-observations} in the next section supports this claim.

\begin{thm}[Synthetic Bayes' Theorem]%
  \label{th:bayes}%
  \defining{linkBayesTheorem}%
  In a \discretePartialMarkovCategory{}, observing a deterministic \m{y ፡ I → Y}
  from a prior distribution \m{p ፡ I → X} through a channel \m{f ፡ X → Y} is the
  same, up to scalar, as evaluating the \BayesianInversion{} on the observation,
  \m{\bayesinv{f}{p}(y)}.
  \begin{figure}[h!]

\tikzset{every picture/.style={line width=0.75pt}} %

\begin{tikzpicture}[x=0.75pt,y=0.75pt,yscale=-1,xscale=1]
\draw   (130,15.5) -- (150,15.5) -- (150,30.5) -- (130,30.5) -- cycle ;
\draw    (110,85) -- (110,75) ;
\draw    (140,40) -- (140,30) ;
\draw    (140,40) .. controls (130.17,44.43) and (125,39.47) .. (125,50) ;
\draw  [fill={rgb, 255:red, 0; green, 0; blue, 0 }  ,fill opacity=1 ] (137,40) .. controls (137,38.34) and (138.34,37) .. (140,37) .. controls (141.66,37) and (143,38.34) .. (143,40) .. controls (143,41.66) and (141.66,43) .. (140,43) .. controls (138.34,43) and (137,41.66) .. (137,40) -- cycle ;
\draw    (140,40) .. controls (149.88,44.47) and (155,39.72) .. (155,50) ;
\draw   (115,50) -- (135,50) -- (135,65) -- (115,65) -- cycle ;
\draw    (155,70) -- (155,50) ;
\draw   (220,40) -- (240,40) -- (240,55) -- (220,55) -- cycle ;
\draw    (230,65) -- (230,55) ;
\draw   (215,65) -- (245,65) -- (245,80) -- (215,80) -- cycle ;
\draw    (230,90) -- (230,80) ;
\draw    (155,90) -- (155,70) ;
\draw  [draw opacity=0] (165,40) -- (205,40) -- (205,65) -- (165,65) -- cycle ;
\draw    (110,75) .. controls (100.17,70.57) and (95,75.53) .. (95,65) ;
\draw  [fill={rgb, 255:red, 0; green, 0; blue, 0 }  ,fill opacity=1 ] (107,75) .. controls (107,76.66) and (108.34,78) .. (110,78) .. controls (111.66,78) and (113,76.66) .. (113,75) .. controls (113,73.34) and (111.66,72) .. (110,72) .. controls (108.34,72) and (107,73.34) .. (107,75) -- cycle ;
\draw    (110,75) .. controls (119.88,70.53) and (125,75.28) .. (125,65) ;
\draw  [fill={rgb, 255:red, 0; green, 0; blue, 0 }  ,fill opacity=1 ] (107,85) .. controls (107,83.34) and (108.34,82) .. (110,82) .. controls (111.66,82) and (113,83.34) .. (113,85) .. controls (113,86.66) and (111.66,88) .. (110,88) .. controls (108.34,88) and (107,86.66) .. (107,85) -- cycle ;
\draw   (85,50) -- (105,50) -- (105,65) -- (85,65) -- cycle ;
\draw   (280,20) -- (300,20) -- (300,35) -- (280,35) -- cycle ;
\draw    (275,74.5) -- (275,64.5) ;
\draw    (290,40) -- (290,35) ;
\draw   (280,40) -- (300,40) -- (300,55) -- (280,55) -- cycle ;
\draw    (275,64.5) .. controls (265.17,60.07) and (260,65.03) .. (260,54.5) ;
\draw  [fill={rgb, 255:red, 0; green, 0; blue, 0 }  ,fill opacity=1 ] (272,64.5) .. controls (272,66.16) and (273.34,67.5) .. (275,67.5) .. controls (276.66,67.5) and (278,66.16) .. (278,64.5) .. controls (278,62.84) and (276.66,61.5) .. (275,61.5) .. controls (273.34,61.5) and (272,62.84) .. (272,64.5) -- cycle ;
\draw    (275,64.5) .. controls (284.88,60.03) and (290,64.78) .. (290,54.5) ;
\draw  [fill={rgb, 255:red, 0; green, 0; blue, 0 }  ,fill opacity=1 ] (272,74.5) .. controls (272,72.84) and (273.34,71.5) .. (275,71.5) .. controls (276.66,71.5) and (278,72.84) .. (278,74.5) .. controls (278,76.16) and (276.66,77.5) .. (275,77.5) .. controls (273.34,77.5) and (272,76.16) .. (272,74.5) -- cycle ;
\draw   (250,40) -- (270,40) -- (270,55) -- (250,55) -- cycle ;

\draw (140,23) node  [font=\footnotesize]  {$p$};
\draw (125,57.5) node  [font=\footnotesize]  {$f$};
\draw (230,47.5) node  [font=\footnotesize]  {$y$};
\draw (230,72.5) node  [font=\footnotesize]  {$f_{\dagger }( p)$};
\draw (185,52.5) node  [font=\footnotesize]  {$=$};
\draw (95,57.5) node  [font=\footnotesize]  {$y$};
\draw (290,27.5) node  [font=\footnotesize]  {$p$};
\draw (290,47.5) node  [font=\footnotesize]  {$f$};
\draw (260,47) node  [font=\footnotesize]  {$y$};

\end{tikzpicture}
     \caption{Synthetic \protect\BayesTheorem{}.}%
    \label{fig:bayesTheorem}
  \end{figure}
\end{thm}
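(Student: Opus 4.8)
The plan is to reduce the statement to the defining equation of the \BayesianInversion{} together with a single manipulation of the comparator. First I would recall, from \Cref{prop:bayes-inversions-from-conditionals}, that $\bayesinv{f}{p}$ is exactly a \conditional{} of the joint state $p ⨾ ν_X ⨾ (f ⊗ \id{})$; unfolding \Cref{def:conditional} and using that its first \marginal{} is $p ⨾ f$, this yields the defining equation depicted in \Cref{diagram-bayesian-inversion},
\[
p ⨾ ν_X ⨾ (f ⊗ \id{}) = (p ⨾ f) ⊲ \bayesinv{f}{p},
\]
which rewrites the joint distribution of $(Y,X)$ as: sample the $Y$-marginal $p ⨾ f$, copy it, and feed one copy through the inversion. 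The left-hand side of \Cref{fig:bayesTheorem} is precisely this joint with its $Y$-output observed at $y$ — that is, postcomposed on the $Y$-wire with the operation $\mathsf{obs}_y ፡ Y → I$ that compares the wire against $y$ using the comparator and then discards. Substituting the equation above and unfolding the definition of conditional composition for a state, the left-hand side becomes $(p ⨾ f) ⨾ ν_Y ⨾ (\mathsf{obs}_y ⊗ \bayesinv{f}{p})$.

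The heart of the argument is then a \emph{substitution lemma} for the comparator: observing a \deterministic{} value $y$ on one branch of a copy is the same as asserting the value as a scalar and re-injecting $y$ along the other branch,
\[
ν_Y ⨾ (\mathsf{obs}_y ⊗ \id{}) = \mathsf{obs}_y ⨾ y.
\]
I would prove this from the partial Frobenius axioms of the comparator (\Cref{diagram-partial-frobenius}) — in particular commutativity, the Frobenius law, and the special law that copying followed by the comparator is the identity — together with the hypothesis that $y$ is \deterministic{}, so that $y ⨾ ν_Y = y ⊗ y$, i.e.\ $y$ is a point. Intuitively, comparing against a point forces the surviving wire to carry that point, so the outgoing $Y$ may be replaced by $y$ while the remaining content collapses to the scalar $\mathsf{obs}_y$.

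Applying the lemma to the branch carrying $\bayesinv{f}{p}$ turns the left-hand side into $(p ⨾ f) ⨾ \mathsf{obs}_y ⨾ y ⨾ \bayesinv{f}{p}$. Here $(p ⨾ f) ⨾ \mathsf{obs}_y ፡ I → I$ is a scalar — the evidence, i.e.\ the probability of observing $y$ — while $y ⨾ \bayesinv{f}{p} ፡ I → X$ is exactly $\bayesinv{f}{p}(y)$, the posterior evaluated at the observation. Since scalars (endomorphisms of the monoidal unit) are central, the composite factors as this scalar tensored with $\bayesinv{f}{p}(y)$, which is precisely the right-hand side of \Cref{fig:bayesTheorem}; this establishes the equality up to scalar. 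I expect the substitution lemma to be the main obstacle: every other step is a direct appeal to the definition of \BayesianInversion{} or to centrality of scalars, whereas this step is where the \emph{discrete} structure is genuinely used and where \deterministic{}ity of $y$ and the special Frobenius axiom must be combined carefully.
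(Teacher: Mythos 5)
Your proposal is correct and follows essentially the same route as the paper: the paper's proof is exactly the chain \emph{(i)} definition of \BayesianInversion{}, \emph{(ii)} partial Frobenius axioms, \emph{(iii)} determinism of $y$, and your ``substitution lemma'' is precisely the packaging of steps \emph{(ii)} and \emph{(iii)} into one statement (it does hold, by the Frobenius law moving the copy onto the $y$-leg and then $y ⨾ ν = y ⊗ y$; the special axiom is not actually needed there). The only cosmetic difference is that the paper carries out the manipulation directly as a string-diagram chain rather than isolating the lemma.
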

\begin{proof}
  The equalities follow from: (\emph{i}) the definition of Bayesian inversion
  (\Cref{def:bayes-inversion}), (\emph{ii}) the partial
  Frobenius axioms (\Cref{diagram-partial-frobenius}), and (\emph{iii}) the fact
  that $y$ is \deterministic{}.
  \begin{figure}[h!]

\tikzset{every picture/.style={line width=0.75pt}} %

\begin{tikzpicture}[x=0.75pt,y=0.75pt,yscale=-1,xscale=1]
\draw   (130,15.5) -- (150,15.5) -- (150,30.5) -- (130,30.5) -- cycle ;
\draw    (110,85) -- (110,75) ;
\draw    (140,40) -- (140,30) ;
\draw    (140,40) .. controls (130.17,44.43) and (125,39.47) .. (125,50) ;
\draw  [fill={rgb, 255:red, 0; green, 0; blue, 0 }  ,fill opacity=1 ] (137,40) .. controls (137,38.34) and (138.34,37) .. (140,37) .. controls (141.66,37) and (143,38.34) .. (143,40) .. controls (143,41.66) and (141.66,43) .. (140,43) .. controls (138.34,43) and (137,41.66) .. (137,40) -- cycle ;
\draw    (140,40) .. controls (149.88,44.47) and (155,39.72) .. (155,50) ;
\draw   (115,50) -- (135,50) -- (135,65) -- (115,65) -- cycle ;
\draw    (155,70) -- (155,50) ;
\draw   (470,40) -- (490,40) -- (490,55) -- (470,55) -- cycle ;
\draw    (480,65) -- (480,55) ;
\draw   (465,65) -- (495,65) -- (495,80) -- (465,80) -- cycle ;
\draw    (480,90) -- (480,80) ;
\draw    (155,90) -- (155,70) ;
\draw  [draw opacity=0] (160,40) -- (200,40) -- (200,65) -- (160,65) -- cycle ;
\draw    (110,75) .. controls (100.17,70.57) and (95,75.53) .. (95,65) ;
\draw  [fill={rgb, 255:red, 0; green, 0; blue, 0 }  ,fill opacity=1 ] (107,75) .. controls (107,76.66) and (108.34,78) .. (110,78) .. controls (111.66,78) and (113,76.66) .. (113,75) .. controls (113,73.34) and (111.66,72) .. (110,72) .. controls (108.34,72) and (107,73.34) .. (107,75) -- cycle ;
\draw    (110,75) .. controls (119.88,70.53) and (125,75.28) .. (125,65) ;
\draw  [fill={rgb, 255:red, 0; green, 0; blue, 0 }  ,fill opacity=1 ] (107,85) .. controls (107,83.34) and (108.34,82) .. (110,82) .. controls (111.66,82) and (113,83.34) .. (113,85) .. controls (113,86.66) and (111.66,88) .. (110,88) .. controls (108.34,88) and (107,86.66) .. (107,85) -- cycle ;
\draw   (85,50) -- (105,50) -- (105,65) -- (85,65) -- cycle ;
\draw   (530,20) -- (550,20) -- (550,35) -- (530,35) -- cycle ;
\draw    (525,74.5) -- (525,64.5) ;
\draw    (540,40) -- (540,35) ;
\draw   (530,40) -- (550,40) -- (550,55) -- (530,55) -- cycle ;
\draw    (525,64.5) .. controls (515.17,60.07) and (510,65.03) .. (510,54.5) ;
\draw  [fill={rgb, 255:red, 0; green, 0; blue, 0 }  ,fill opacity=1 ] (522,64.5) .. controls (522,66.16) and (523.34,67.5) .. (525,67.5) .. controls (526.66,67.5) and (528,66.16) .. (528,64.5) .. controls (528,62.84) and (526.66,61.5) .. (525,61.5) .. controls (523.34,61.5) and (522,62.84) .. (522,64.5) -- cycle ;
\draw    (525,64.5) .. controls (534.88,60.03) and (540,64.78) .. (540,54.5) ;
\draw  [fill={rgb, 255:red, 0; green, 0; blue, 0 }  ,fill opacity=1 ] (522,74.5) .. controls (522,72.84) and (523.34,71.5) .. (525,71.5) .. controls (526.66,71.5) and (528,72.84) .. (528,74.5) .. controls (528,76.16) and (526.66,77.5) .. (525,77.5) .. controls (523.34,77.5) and (522,76.16) .. (522,74.5) -- cycle ;
\draw   (500,40) -- (520,40) -- (520,55) -- (500,55) -- cycle ;
\draw   (250,15) -- (270,15) -- (270,30) -- (250,30) -- cycle ;
\draw    (230,89.5) -- (230,79.5) ;
\draw   (250,35) -- (270,35) -- (270,50) -- (250,50) -- cycle ;
\draw    (275,95) -- (275,85) ;
\draw    (230,79.5) .. controls (220.17,75.07) and (215,80.03) .. (215,69.5) ;
\draw  [fill={rgb, 255:red, 0; green, 0; blue, 0 }  ,fill opacity=1 ] (227,79.5) .. controls (227,81.16) and (228.34,82.5) .. (230,82.5) .. controls (231.66,82.5) and (233,81.16) .. (233,79.5) .. controls (233,77.84) and (231.66,76.5) .. (230,76.5) .. controls (228.34,76.5) and (227,77.84) .. (227,79.5) -- cycle ;
\draw    (230,79.5) .. controls (239.88,75.03) and (245,79.78) .. (245,69.5) ;
\draw  [fill={rgb, 255:red, 0; green, 0; blue, 0 }  ,fill opacity=1 ] (227,89.5) .. controls (227,87.84) and (228.34,86.5) .. (230,86.5) .. controls (231.66,86.5) and (233,87.84) .. (233,89.5) .. controls (233,91.16) and (231.66,92.5) .. (230,92.5) .. controls (228.34,92.5) and (227,91.16) .. (227,89.5) -- cycle ;
\draw   (205,54.5) -- (225,54.5) -- (225,69.5) -- (205,69.5) -- cycle ;
\draw    (260,35) -- (260,30) ;
\draw    (260,60) -- (260,50) ;
\draw    (260,60) .. controls (250.17,64.43) and (245,59.47) .. (245,70) ;
\draw  [fill={rgb, 255:red, 0; green, 0; blue, 0 }  ,fill opacity=1 ] (257,60) .. controls (257,58.34) and (258.34,57) .. (260,57) .. controls (261.66,57) and (263,58.34) .. (263,60) .. controls (263,61.66) and (261.66,63) .. (260,63) .. controls (258.34,63) and (257,61.66) .. (257,60) -- cycle ;
\draw    (260,60) .. controls (269.88,64.47) and (275,59.72) .. (275,70) ;
\draw   (260,70) -- (290,70) -- (290,85) -- (260,85) -- cycle ;
\draw  [draw opacity=0] (290,40.5) -- (330,40.5) -- (330,65.5) -- (290,65.5) -- cycle ;
\draw   (395,15) -- (415,15) -- (415,30) -- (395,30) -- cycle ;
\draw    (390,90) -- (390,80) ;
\draw   (395,35) -- (415,35) -- (415,50) -- (395,50) -- cycle ;
\draw    (345,95) -- (345,85) ;
\draw    (390,80) .. controls (380.17,75.57) and (375,80.53) .. (375,70) ;
\draw  [fill={rgb, 255:red, 0; green, 0; blue, 0 }  ,fill opacity=1 ] (387,80) .. controls (387,81.66) and (388.34,83) .. (390,83) .. controls (391.66,83) and (393,81.66) .. (393,80) .. controls (393,78.34) and (391.66,77) .. (390,77) .. controls (388.34,77) and (387,78.34) .. (387,80) -- cycle ;
\draw    (390,80) .. controls (399.88,75.53) and (405,80.28) .. (405,70) ;
\draw  [fill={rgb, 255:red, 0; green, 0; blue, 0 }  ,fill opacity=1 ] (387,90) .. controls (387,88.34) and (388.34,87) .. (390,87) .. controls (391.66,87) and (393,88.34) .. (393,90) .. controls (393,91.66) and (391.66,93) .. (390,93) .. controls (388.34,93) and (387,91.66) .. (387,90) -- cycle ;
\draw   (350,35) -- (370,35) -- (370,50) -- (350,50) -- cycle ;
\draw    (405,35) -- (405,30) ;
\draw   (330,70) -- (360,70) -- (360,85) -- (330,85) -- cycle ;
\draw    (360,60) -- (360,50) ;
\draw    (360,60) .. controls (350.17,64.43) and (345,59.47) .. (345,70) ;
\draw  [fill={rgb, 255:red, 0; green, 0; blue, 0 }  ,fill opacity=1 ] (357,60) .. controls (357,58.34) and (358.34,57) .. (360,57) .. controls (361.66,57) and (363,58.34) .. (363,60) .. controls (363,61.66) and (361.66,63) .. (360,63) .. controls (358.34,63) and (357,61.66) .. (357,60) -- cycle ;
\draw    (360,60) .. controls (369.88,64.47) and (375,59.72) .. (375,70) ;
\draw    (405,70) -- (405,50) ;
\draw  [draw opacity=0] (420,40) -- (460,40) -- (460,65) -- (420,65) -- cycle ;

\draw (260,22.5) node  [font=\footnotesize]  {$p$};
\draw (125,57.5) node  [font=\footnotesize]  {$f$};
\draw (480,47.5) node  [font=\footnotesize]  {$y$};
\draw (480,72.5) node  [font=\footnotesize]  {$f_{\dagger }( p)$};
\draw (180,52.5) node  [font=\footnotesize]  {$\overset{( i)}{=}$};
\draw (95,57.5) node  [font=\footnotesize]  {$y$};
\draw (540,27.5) node  [font=\footnotesize]  {$p$};
\draw (540,47.5) node  [font=\footnotesize]  {$f$};
\draw (510,47) node  [font=\footnotesize]  {$y$};
\draw (260,42.5) node  [font=\footnotesize]  {$f$};
\draw (215,62) node  [font=\footnotesize]  {$y$};
\draw (275,77.5) node  [font=\footnotesize]  {$f_{\dagger }( p)$};
\draw (140,23) node  [font=\footnotesize]  {$p$};
\draw (405,22.5) node  [font=\footnotesize]  {$p$};
\draw (405,42.5) node  [font=\footnotesize]  {$f$};
\draw (360,42.5) node  [font=\footnotesize]  {$y$};
\draw (345,77.5) node  [font=\footnotesize]  {$f_{\dagger }( p)$};
\draw (310,53) node  [font=\footnotesize]  {$\overset{( ii)}{=}$};
\draw (440,52.5) node  [font=\footnotesize]  {$\overset{( iii)}{=}$};

\end{tikzpicture}
     \caption{Proof of the synthetic \protect\BayesTheorem{}.}%
    \label{fig:bayesTheoremProof}
  \end{figure}
\end{proof}

\subsection{Pearl's and Jeffrey's updates}%
\label{sec:pearl-jeffrey}

The process for updating a belief on new evidence may depend on the type of
evidence given. \Pearls{}~\cite{pearl1988probabilistic,pearl1990jeffrey} and
\Jeffreys{}~\cite{jeffrey1990logic,shafer1981jeffrey,halpern2017reasoning}
updates are two possibilities for performing an update of a belief in light of a
new piece of evidence that is not necessarily a single \deterministic{}
observation \cite{jacobs_2019}.\footnote{Note that Jacobs and Stein propose a different formulation of Jeffrey's update, using multisets \cite{jacobstein}.} Updating a prior belief according to \Pearls{} rule increases
\emph{validity}, i.e.\ the probability of the new evidence being true according
to our belief~\cite{cho2015introduction}. On the other hand, updating with
\Jeffreys{} rule reduces ``how far'' the new evidence is from our prediction,
i.e.\ it decreases \emph{Kullback-Leibler
divergence}~\cite{jacobs_2019,jacobs2021learning}. 

The difference between these two update rules comes from the fact that they are
based on different types of evidence. \Pearls{} evidence comes as a probabilistic
predicate, i.e.\ a morphism \m{q ፡ Y → I} in a \discretePartialMarkovCategory{}.
\Pearls{} update coincides with the update prescribed by \BayesTheorem{}.

\begin{defi}[Pearl's update]
  \label{def:pearl-update}
  \defining{linkPearls}{}
  Let \m{p ፡ I → X} be a prior distribution and \m{q ፡ Y → I} be a predicate in
  a \discretePartialMarkovCategory{} \m{ℂ}, which is observed through a channel
  \m{f ፡ X → Y}. Pearl's updated prior is defined to be $p ⊲ (f ⨾ q)$ or,
  equivalently, \(\bayesinv{(f ⨾ q)}{p}\), the \BayesianInversion{} of \(f ⨾ q\)
  with respect to \(p\).
  \begin{figure}[h!]

\tikzset{every picture/.style={line width=0.75pt}} %

\begin{tikzpicture}[x=0.75pt,y=0.75pt,yscale=-1,xscale=1]
\draw   (130,15.5) -- (150,15.5) -- (150,30.5) -- (130,30.5) -- cycle ;
\draw    (140,40) -- (140,30) ;
\draw    (140,40) .. controls (130.17,44.43) and (125,39.47) .. (125,50) ;
\draw  [fill={rgb, 255:red, 0; green, 0; blue, 0 }  ,fill opacity=1 ] (137,40) .. controls (137,38.34) and (138.34,37) .. (140,37) .. controls (141.66,37) and (143,38.34) .. (143,40) .. controls (143,41.66) and (141.66,43) .. (140,43) .. controls (138.34,43) and (137,41.66) .. (137,40) -- cycle ;
\draw    (140,40) .. controls (149.88,44.47) and (155,39.72) .. (155,50) ;
\draw   (115,50) -- (135,50) -- (135,65) -- (115,65) -- cycle ;
\draw    (155,70) -- (155,50) ;
\draw    (155,90) -- (155,70) ;
\draw  [draw opacity=0] (160,40) -- (200,40) -- (200,65) -- (160,65) -- cycle ;
\draw   (115,70) -- (135,70) -- (135,85) -- (115,85) -- cycle ;
\draw    (125,70) -- (125,65) ;
\draw   (200,15) -- (220,15) -- (220,30) -- (200,30) -- cycle ;
\draw   (200,35) -- (220,35) -- (220,50) -- (200,50) -- cycle ;
\draw   (200,55) -- (220,55) -- (220,70) -- (200,70) -- cycle ;
\draw    (210,35) -- (210,30) ;
\draw    (210,55) -- (210,50) ;
\draw    (255,90) -- (255,50) ;
\draw   (225,35) -- (285,35) -- (285,50) -- (225,50) -- cycle ;

\draw (140,23) node  [font=\footnotesize]  {$p$};
\draw (125,57.5) node  [font=\footnotesize]  {$f$};
\draw (180,52.5) node  [font=\footnotesize]  {$=$};
\draw (125,77.5) node  [font=\footnotesize]  {$q$};
\draw (210,22.5) node  [font=\footnotesize]  {$p$};
\draw (210,42.5) node  [font=\footnotesize]  {$f$};
\draw (210,62.5) node  [font=\footnotesize]  {$q$};
\draw (255,42.5) node  [font=\footnotesize]  {$(f ⨾ q)_{\dagger }( p)$};

\end{tikzpicture}
     \caption{\protect\PearlsUpdate{}.}
  \end{figure}
\end{defi}

\Jeffreys{} evidence, on the other hand, is given by a distribution on \m{Y}.

\begin{defi}[Jeffrey's update]
\label{def:jeffrey-update}
\defining{linkJeffreys}{}
Let \m{t ፡ I → Y} be a state in \m{ℂ}, representing the evidence. \emph{Jeffrey's updated prior} is \m{t ⨾ \bayesinv{f}{p}}, the composition of the evidence with a \BayesianInversion{} of \(f\) with respect to \(p\), which is defined only up to $p$-\almostSureEquality{}.
\end{defi}

When \Pearls{} evidence predicate \(q\) is \deterministic{}---that is, its
probability mass is concentrated in just one point \(y ∈ Y\)---then it becomes a
comparison.
\begin{figure}[!h]

\tikzset{every picture/.style={line width=0.75pt}} %

\begin{tikzpicture}[x=0.75pt,y=0.75pt,yscale=-1,xscale=1]
\draw  [draw opacity=0] (160,40) -- (200,40) -- (200,65) -- (160,65) -- cycle ;
\draw   (135,45) -- (155,45) -- (155,60) -- (135,60) -- cycle ;
\draw    (145,45) -- (145,35) ;
\draw    (230,65) -- (230,55) ;
\draw    (230,55) .. controls (220.17,50.57) and (215,55.53) .. (215,45) ;
\draw  [fill={rgb, 255:red, 0; green, 0; blue, 0 }  ,fill opacity=1 ] (227,55) .. controls (227,56.66) and (228.34,58) .. (230,58) .. controls (231.66,58) and (233,56.66) .. (233,55) .. controls (233,53.34) and (231.66,52) .. (230,52) .. controls (228.34,52) and (227,53.34) .. (227,55) -- cycle ;
\draw    (230,55) .. controls (239.88,50.53) and (245,55.28) .. (245,45) ;
\draw  [fill={rgb, 255:red, 0; green, 0; blue, 0 }  ,fill opacity=1 ] (227,65) .. controls (227,63.34) and (228.34,62) .. (230,62) .. controls (231.66,62) and (233,63.34) .. (233,65) .. controls (233,66.66) and (231.66,68) .. (230,68) .. controls (228.34,68) and (227,66.66) .. (227,65) -- cycle ;
\draw   (205,30) -- (225,30) -- (225,45) -- (205,45) -- cycle ;
\draw    (245,45) -- (245,25) ;

\draw (180,52.5) node  [font=\footnotesize]  {$=$};
\draw (145,52.5) node  [font=\footnotesize]  {$q$};
\draw (215,37.5) node  [font=\footnotesize]  {$y$};

\end{tikzpicture}
  \label{eq:map-jeffrey-to-pearl}
\end{figure}
In this case, there is no difference between the two update rules. This result
was shown by Jacobs~\cite[Proposition 5.3]{jacobs_2019} in the case of the
Kleisli category of the finitary distribution monad: let us prove it in any
\discretePartialMarkovCategory{}.

\begin{prop}
  \label{prop:pearlandjeffrey}
  \Pearls{} and \Jeffreys{} updates coincide for \deterministic{} observations.
  Let \m{y ፡ I → Y} be \deterministic{}, then \PearlsUpdate{} on the predicate
  \m{q ፡ Y → I}, as defined in \Cref{eq:map-jeffrey-to-pearl}, is
  \JeffreysUpdate{} on \m{y}.
\end{prop}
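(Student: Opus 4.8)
The plan is to recognise that, once the predicate $q$ is taken to be the deterministic one of \Cref{eq:map-jeffrey-to-pearl}, Pearl's update becomes literally the left-hand side of \BayesTheorem{}, so the statement follows by a single application of \Cref{th:bayes}. First I would unfold both updates as states on $X$. By \Cref{def:pearl-update}, Pearl's update on $q ፡ Y → I$ is $p ⊲ (f ⨾ q)$, and expanding the \conditionalComposition{} this is the state $p ⨾ ν_X ⨾ (\id{X} ⊗ (f ⨾ q)) ፡ I → X$. By \Cref{def:jeffrey-update}, Jeffrey's update on the deterministic state $y ፡ I → Y$ is $y ⨾ \bayesinv{f}{p} ፡ I → X$.

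Next I would substitute the hypothesis on $q$: by \Cref{eq:map-jeffrey-to-pearl}, the deterministic predicate is $q = (\id{Y} ⊗ y) ⨾ \compare_Y ⨾ ε_Y$, i.e.\ it compares its input against $y$ through the comparator and then discards. Feeding this into $p ⨾ ν_X ⨾ (\id{X} ⊗ (f ⨾ q))$ and rearranging by associativity, the resulting string diagram is exactly the one on the left of \Cref{fig:bayesTheorem}: sample the prior $p$, copy, run $f$ on one copy, compare the outcome with $y$ via $\compare_Y$, and discard. Thus Pearl's update coincides on the nose with the ``observe $y$ through $f$'' process.

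Then I would invoke \Cref{th:bayes}: that process equals $y ⨾ \bayesinv{f}{p}$ up to a scalar. Since the right-hand side is, verbatim, Jeffrey's update on $y$, the two updates agree. Collecting everything, the argument is the chain $p ⊲ (f ⨾ q) = p ⨾ ν_X ⨾ (\id{X} ⊗ f) ⨾ (\id{X} ⊗ q) = y ⨾ \bayesinv{f}{p}$, where the second equality is \Cref{th:bayes}.

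The only real work is bookkeeping, and the main obstacle is matching the diagram of $p ⊲ (f ⨾ q)$, after inserting the comparator form of $q$, precisely to the left-hand diagram of \Cref{th:bayes}, so that the theorem applies directly rather than merely up to rewiring. I would also be careful about the ``up to scalar'' qualifier that \Cref{th:bayes} carries: the state $p ⊲ (f ⨾ q)$ is subnormalised with total mass the \emph{validity} $p ⨾ f ⨾ q$, whereas $y ⨾ \bayesinv{f}{p}$ is normalised, so the scalar relating them is this common validity and the two updates agree exactly only after \normalisation{} (equivalently, when Pearl's update is read in its $\bayesinv{(f ⨾ q)}{p}$ form).
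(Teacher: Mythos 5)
Your proposal is correct and follows essentially the same route as the paper, whose proof simply states that the result follows from the same string-diagrammatic reasoning as \BayesTheorem{} (\Cref{th:bayes}); you have just spelled out the bookkeeping that reduces Pearl's update with the comparator-form predicate to the left-hand diagram of \Cref{fig:bayesTheorem}. Your closing caveat about the ``up to scalar'' qualifier is apt and consistent with how the paper states \Cref{th:bayes}, so nothing further is needed.
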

\begin{proof}
  The result follows from exactly the same string diagrammatic reasoning as
  \BayesTheorem{} (\Cref{th:bayes}). 
\end{proof}

\section{Exact Observations}%
\label{sec:exactObservations}

Bayesian inference in \discretePartialMarkovCategories{} relies on the
existence of \comparators{}. However, many \partialMarkovCategories{} may not
have this structure or, even worse, its comparator structure might not behave as
expected, providing a poor semantics of Bayesian inference (see
\Cref{ex:borel-subdistributions-no-comparator}). 

In most inference problems, however, we do not need the whole comparator structure, just a means of comparing with any exact value: in other words, we do not need to be able to compare two samples from a continuous distribution, but just to compare a continuous sample with a discrete value (we will see a concrete example in \Cref{ex:inferring-mean}).

These are called \emph{exact observations}~\cite{stein2021compositional, stein_thesis_2021}. This section shall construct a \partialMarkovCategory{} allowing exact observations over an arbitrary \MarkovCategory{}; we immediately obtain Synthetic Bayes' Theorem (\Cref{th:bayes-exact-observations}) for exact observations.

\subsection{Exact observations}
We define the category \(\exact(ℂ)\) of \emph{\exactObservations{}} over a
\copyDiscardCategory{}, \(ℂ\), by syntactically adding observations of
\deterministic{} evidence. A different but similar construction has appeared in
the work of Stein and Staton~\cite{stein2021compositional, stein_thesis_2021};
the main difference is that, by employing \conditionals{}, the construction here
admits a stronger factorization that allows us to compute \normalisations{} and
\conditionals{} (\Cref{prop:conditionals-of-programs}). We show that, when \(ℂ\)
is a \MarkovCategory{}, processes with \exactObservations{} form a
\partialMarkovCategory{}; \conditionals{} and \normalisations{} for them can be
computed with the \conditionals{} from the original \MarkovCategory{}. A normal
form theorem will enable this computation.

\begin{defi}[Exact observations]%
\label{def:exact-conditioning-cat}%
\defining{linkconsprocC}%
\defining{linkobsv}%
  \AP The category of \intro{processes with exact observations}, \(\exact(ℂ)\),
  over a strict \copyDiscardCategory{} $ℂ$, is the \kl{copy-discard category}
  with the same objects, $\exact(ℂ)_{obj} = ℂ_{obj}$, and morphisms freely generated by
  \begin{itemize}
    \item a morphism $\pur{f} ∈ \exact(ℂ)(X;Y)$ for each morphism $f ∈ ℂ(X;Y)$;
    \item a morphism, $\obsv{x} ∈ \exact(ℂ)(X;I)$ for each \kl{deterministic}
    morphism $x ∈ ℂ(I;X)$, i.e.~such that $x ⨾ ν = x ⊗ x$;
  \end{itemize}
  quotiented by compatibility with the tensors, $\pur{f} ⊗ \pur{g} = \pur{(f ⊗
  g)}$; composition, $\pur{f} ⨾ \pur{g} = \pur{(f ⨾ g)}$; identities,
  $\pur{\id{}} = \id{}$; copy, $\pur{ν} = ν$; and discard, $\pur{ε} = ε$; this ensures that there is an identity-on-objects strict monoidal functor preserving the
  \kl{copy-discard} structure, $\pur{(-)} ፡ ℂ → \exact(ℂ)$. Moreover, it is
  quotiented by compatibility of \kl{exact observations} with the tensor,
  $\obsv{(x ⊗ y)} = \obsv{x} ⊗ \obsv{y}$ and $\obsv{\id{I}} = \id{I}$, and by the
  observation axiom, $\obsv{x} ⊲ \id{} = \obsv{x} ⨾ \pur{x}$ (in
  \Cref{fig:axiom-exact-conditioning-cat}), 
  \begin{figure}[h!]

\tikzset{every picture/.style={line width=0.75pt}} %

\begin{tikzpicture}[x=0.75pt,y=0.75pt,yscale=-1,xscale=1]
\draw    (150,40) -- (150,25) ;
\draw    (150,40) .. controls (140.17,44.43) and (135,39.47) .. (135,50) ;
\draw  [fill={rgb, 255:red, 0; green, 0; blue, 0 }  ,fill opacity=1 ] (147,40) .. controls (147,38.34) and (148.34,37) .. (150,37) .. controls (151.66,37) and (153,38.34) .. (153,40) .. controls (153,41.66) and (151.66,43) .. (150,43) .. controls (148.34,43) and (147,41.66) .. (147,40) -- cycle ;
\draw    (150,40) .. controls (159.88,44.47) and (165,39.72) .. (165,50) ;
\draw    (165,70) -- (165,50) ;
\draw   (200,50) -- (220,50) -- (220,65) -- (200,65) -- cycle ;
\draw  [draw opacity=0] (165,45) -- (200,45) -- (200,60) -- (165,60) -- cycle ;
\draw   (125,62) .. controls (125,63.66) and (126.34,65) .. (128,65) -- (142,65) .. controls (143.66,65) and (145,63.66) .. (145,62) -- (145,50) .. controls (145,50) and (145,50) .. (145,50) -- (125,50) .. controls (125,50) and (125,50) .. (125,50) -- cycle ;
\draw    (210,30) -- (210,25) ;
\draw   (200,42) .. controls (200,43.66) and (201.34,45) .. (203,45) -- (217,45) .. controls (218.66,45) and (220,43.66) .. (220,42) -- (220,30) .. controls (220,30) and (220,30) .. (220,30) -- (200,30) .. controls (200,30) and (200,30) .. (200,30) -- cycle ;
\draw    (210,70) -- (210,65) ;

\draw (135,57.5) node  [font=\footnotesize]  {$x^{\circ }$};
\draw (182.5,52.5) node  [font=\footnotesize]  {$=$};
\draw (210,57.5) node  [font=\footnotesize]  {$\pur{x}$};
\draw (150,21.6) node [anchor=south] [inner sep=0.75pt]  [font=\tiny]  {$X$};
\draw (210,37.5) node  [font=\footnotesize]  {$x^{\circ }$};
\draw (210,21.6) node [anchor=south] [inner sep=0.75pt]  [font=\tiny]  {$X$};
\draw (210,73.4) node [anchor=north] [inner sep=0.75pt]  [font=\tiny]  {$X$};
\draw (165,73.4) node [anchor=north] [inner sep=0.75pt]  [font=\tiny]  {$X$};

\end{tikzpicture}
     \caption{Axiom for the category of \protect\exactObservations{}.}%
    \label{fig:axiom-exact-conditioning-cat}%
  \end{figure}
\end{defi}

We interpret the generator, \(\obsv{x} ፡ X → I\), as the observation of the
corresponding \kl{deterministic} evidence, \(x ፡ I → X\). Under this interpretation,
the following equivalent formulation of the axiom explains that 
observing $x$ and computing some $f$ is the same as observing $x$ and passing it to the
input of that computation,
$$x^{∘} ⊲ \pur{f} = x^{∘} ⨾ \pur{x} ⨾ \pur{f}.$$

\ExactObservations{} on a \MarkovCategory{} give a syntax for stochastic
processes with deterministic evidence. In principle, it is not clear how to
compute the semantics of these \exactObservations{} and, in particular, how to
compute conditionals of them. We show that we can give semantics to
\exactObservations{} in the original \MarkovCategory{} by computing their
\normalisations{}. A consequence of this result is that \conditionals{} of
processes with \exactObservations{} can be computed by \conditionals{} in the
original \MarkovCategory{}.

\begin{thm}%
  \label{prop:normalisation-of-programs}%
  Given a strict \kl{Markov category} $ℂ$,
  any \kl{process with exact observations} in 
  $\exact(ℂ)$, can be factored as $(\pur{g} ⨾ \obsv{z}) ⊲ \pur{f}$, for some \total{} morphisms $f$
  and $g$, and some \total{} and \deterministic{} $z$.
  Moreover, $f$ is, \almostSurely{}, its \normalisation{}: 
  $$\norm{((\pur{g} ⨾ \obsv{z}) ⊲ \pur{f})} =_{(\pur{g} ⨾ \obsv{z})} \pur{f}.$$
\end{thm}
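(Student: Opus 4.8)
The plan is to prove a \emph{normal form} result by structural induction on the morphisms of the freely generated category $\exact(ℂ)$, and then to read off both the displayed factorization and the ``moreover'' clause. It is cleanest to first establish an equivalent \emph{intermediate form}: every $u ፡ X → Y$ in $\exact(ℂ)$ can be written as $\pur{k} ⨾ (\obsv{z} ⊗ \id_Y)$ for a single \total{} pure morphism $k ፡ X → U ⊗ Y$ and a single \total{} \deterministic{} $z ፡ I → U$. This is equivalent to the target form because unfolding \marginalComposition{} gives the identity $(\pur{g} ⨾ \obsv{z}) ⊲ \pur{f} = ν_X ⨾ (\pur{g} ⊗ \pur{f}) ⨾ (\obsv{z} ⊗ \id_Y)$, so $k = ν_X ⨾ (g ⊗ f)$ witnesses one direction, and the \conditionals{} of $ℂ$ (\Cref{def:conditional}) will witness the other.

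For the induction, the base cases are immediate, using that every morphism of the \MarkovCategory{} $ℂ$ is \total{}: a pure $\pur{f}$ is in intermediate form with $U = I$, $z = \id_I$ and $k = f$; an observation $\obsv{x}$ is in intermediate form with $Y = I$, $k = \id_X$ and $z = x$. The tensor case follows by tensoring the two witnesses and re-ordering the observed and kept wires with a symmetry, keeping the pure part \total{}. The key inductive step is composition: given $u = \pur{k} ⨾ (\obsv{z} ⊗ \id_Y)$ and $v = \pur{k'} ⨾ (\obsv{z'} ⊗ \id_W)$, the middle observation $\obsv{z}$ and the pure morphism $\pur{k'}$ act on disjoint wires, so by \emph{interchange} they commute; using functoriality of $\pur{(-)}$ together with $\obsv{z} ⊗ \obsv{z'} = \obsv{(z ⊗ z')}$, the two observations accumulate into a single observation of the \total{} \deterministic{} $z ⊗ z'$, and the pure parts compose into a single \total{} $k'' = k ⨾ (\id_U ⊗ k')$, so that $u ⨾ v = \pur{k''} ⨾ (\obsv{(z ⊗ z')} ⊗ \id_W)$ is again in intermediate form.

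It remains to convert the intermediate form into the stated one. Given $u = \pur{k} ⨾ (\obsv{z} ⊗ \id_Y)$, I factor $k$ in $ℂ$ through its first \marginal{} by \Cref{def:conditional}, writing $k = (k ⨾ π_1) ⊲ \cond(k)$ with $\cond(k) ፡ X ⊗ U → Y$, and set $g = k ⨾ π_1$. Inside this \marginalComposition{} the object $U$ is copied — one copy is the observed output and one feeds $\cond(k)$ — so the observation axiom (\Cref{fig:axiom-exact-conditioning-cat}) replaces the copy feeding $\cond(k)$ by the constant $\pur{z}$. This produces $u = (\pur{g} ⨾ \obsv{z}) ⊲ \pur{f}$ with $f = (\id_X ⊗ \pur{z}) ⨾ \pur{\cond(k)}$, where $f$, $g$, $z$ are all \total{} (since $z$, $\cond(k)$ and $k ⨾ π_1$ are \total{} in the \MarkovCategory{} $ℂ$) and $z$ is \deterministic{}. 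For the ``moreover'', totality of $\pur{f}$ gives $\pur{f} ⨾ ε = ε$, whence $u ⨾ ε = ν_X ⨾ ((\pur{g} ⨾ \obsv{z}) ⊗ (\pur{f} ⨾ ε)) = \pur{g} ⨾ \obsv{z}$ by counitality; therefore $u = (\pur{g} ⨾ \obsv{z}) ⊲ \pur{f} = (u ⨾ ε) ⊲ \pur{f}$, which is exactly the defining equation of a \normalisation{} (\Cref{def:normalisation}). Hence $\pur{f}$ is a \normalisation{} of $u$, and by almost-sure uniqueness of \normalisations{} we conclude $\norm{u} =_{u ⨾ ε} \pur{f}$.

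The main obstacle is concentrated in two places. First, the composition step requires careful wire bookkeeping to justify that observations commute past pure morphisms on disjoint wires, so that the normal form is preserved under $⨾$ rather than proliferating observations; this is where the symmetric premonoidal \emph{interchange} is used essentially. Second, the re-splitting step must apply the observation axiom to precisely the copied observed wire produced by the conditional factorization of $k$ — this is the point at which the \conditionals{} of the underlying \MarkovCategory{} genuinely enter, and getting the substitution of $\pur{z}$ into $\cond(k)$ right (rather than merely discarding) is the delicate part of the argument.
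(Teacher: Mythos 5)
Your proof is correct, and the ``moreover'' clause is handled exactly as in the paper: \kl{totality} of $\pur{f}$ plus counitality yields the defining equation $u = (u ⨾ ε) ⊲ \pur{f}$ of a \normalisation{}. Where you genuinely diverge is in the organisation of the induction. The paper carries the target form $(\pur{g} ⨾ \obsv{z}) ⊲ \pur{f}$ through the whole structural induction, which makes composition the hard case: composing two morphisms in that form forces a \BayesianInversion{} $\bayesinv{g_2}{f_1}$ into the conditional part, so the \conditionals{} of $ℂ$ are invoked at every composition. You instead work with the weaker intermediate form $\pur{k} ⨾ (\obsv{z} ⊗ \id{Y})$, for which the induction is essentially bookkeeping --- observations slide past pure morphisms on disjoint wires by interchange and accumulate via $\obsv{x} ⊗ \obsv{y} = \obsv{(x ⊗ y)}$ --- and you invoke \conditionals{} exactly once, at the end, splitting $k$ through its first \marginal{} and using the observation axiom $\obsv{z} ⊲ \id{} = \obsv{z} ⨾ \pur{z}$ to substitute the constant $\pur{z}$ into $\pur{\cond(k)}$ (correctly by substitution, not by discarding). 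The two arguments have the same mathematical content --- your final conversion step is essentially the paper's composition step specialised to a pure second factor --- but your factoring isolates the single place where \conditionals{} and the observation axiom do real work, at the small cost of having to note that the stated form is an instance of the intermediate one (via $(\pur{g} ⨾ \obsv{z}) ⊲ \pur{f} = ν_X ⨾ (\pur{g} ⊗ \pur{f}) ⨾ (\obsv{z} ⊗ \id{Y})$) so that the equivalence closes.
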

\begin{proof}[Proof]
  The proof proceeds by structural induction on the \kl{process with exact
  observations}. We can rewrite any morphism from the original category as
  $\pur{f} = (\pur{ε} ⨾ \obsv{\id{I}}) ⊲ \pur{f}$; and we can rewrite any
  \kl{exact observation} as $\obsv{x} = (\id{} ⨾ \obsv{x}) ⊲ ε$.  
  The tensor of two \kl{processes with
  exact observations}, $(\pur{g}_1 ⨾ \obsv{x}_1) ⊲ \pur{f}_1$ and $(\pur{g}_2 ⨾
  \obsv{x}_2) ⊲ \pur{f}_2$, can be rewritten as follows.
  $$
  ((\pur{g}_1 ⨾ \obsv{x}_1) ⊲ \pur{f}_1) ⊗ ((\pur{g}_2 ⨾ \obsv{x}_2) ⊲ \pur{f}_2) =
  (\pur{(g_1 ⊗ g₂)} ⨾ \obsv{(x₁ ⊗ x₂)}) ⊲ \pur{(f₁ ⊗ f₂)}.
  $$
  The composition of two \kl{processes with exact observations}, $(\pur{g}_1 ⨾
  \obsv{x}_1) ⊲ \pur{f}_1$ and $(\pur{g}_2 ⨾ \obsv{x}_2) ⊲ \pur{f}_2$, can be
  rewritten, using \BayesianInversions{}, as follows.
  $$
  ((\pur{g}_1 ⨾
  \obsv{x}_1) ⊲ \pur{f}_1) ⨾ ((\pur{g}_2 ⨾ \obsv{x}_2) ⊲ \pur{f}_2)
  = (\pur{(g_1 ⊗ (f_1 ⨾ g_2))} ⨾ \obsv{(x_1 ⊗ x_2)}) ⊲ \pur{((\id{} ⊗ x_2) ⨾ \bayesinv{g_2}{f_1} ⨾ f_2)}.
  $$
  We prove this by string diagrammatic reasoning (\Cref{fig:compositionNormalForm}).
  \begin{figure}[h!]

\tikzset{every picture/.style={line width=0.75pt}} %

\begin{tikzpicture}[x=0.75pt,y=0.75pt,yscale=-1,xscale=1]
\draw    (180,35) -- (180,25) ;
\draw    (180,35) .. controls (170.17,39.43) and (165,34.47) .. (165,45) ;
\draw  [fill={rgb, 255:red, 0; green, 0; blue, 0 }  ,fill opacity=1 ] (177,35) .. controls (177,33.34) and (178.34,32) .. (180,32) .. controls (181.66,32) and (183,33.34) .. (183,35) .. controls (183,36.66) and (181.66,38) .. (180,38) .. controls (178.34,38) and (177,36.66) .. (177,35) -- cycle ;
\draw    (180,35) .. controls (189.88,39.47) and (195,34.72) .. (195,45) ;
\draw   (185,45) -- (205,45) -- (205,60) -- (185,60) -- cycle ;
\draw   (185,77) .. controls (185,78.66) and (186.34,80) .. (188,80) -- (202,80) .. controls (203.66,80) and (205,78.66) .. (205,77) -- (205,65) .. controls (205,65) and (205,65) .. (205,65) -- (185,65) .. controls (185,65) and (185,65) .. (185,65) -- cycle ;
\draw    (195,65) -- (195,60) ;
\draw   (155,45) -- (175,45) -- (175,60) -- (155,60) -- cycle ;
\draw    (165,75) -- (165,60) ;
\draw    (165,75) .. controls (155.17,79.43) and (150,74.47) .. (150,85) ;
\draw    (165,75) .. controls (174.88,79.47) and (180,74.72) .. (180,85) ;
\draw   (170,85) -- (190,85) -- (190,100) -- (170,100) -- cycle ;
\draw   (170,117) .. controls (170,118.66) and (171.34,120) .. (173,120) -- (187,120) .. controls (188.66,120) and (190,118.66) .. (190,117) -- (190,105) .. controls (190,105) and (190,105) .. (190,105) -- (170,105) .. controls (170,105) and (170,105) .. (170,105) -- cycle ;
\draw    (180,105) -- (180,100) ;
\draw  [fill={rgb, 255:red, 0; green, 0; blue, 0 }  ,fill opacity=1 ] (162,75) .. controls (162,73.34) and (163.34,72) .. (165,72) .. controls (166.66,72) and (168,73.34) .. (168,75) .. controls (168,76.66) and (166.66,78) .. (165,78) .. controls (163.34,78) and (162,76.66) .. (162,75) -- cycle ;
\draw   (140,85) -- (160,85) -- (160,100) -- (140,100) -- cycle ;
\draw    (150,125) -- (150,100) ;
\draw    (270,25) -- (270,15) ;
\draw    (255,35) .. controls (245.17,39.43) and (240,34.47) .. (240,45) ;
\draw  [fill={rgb, 255:red, 0; green, 0; blue, 0 }  ,fill opacity=1 ] (252,35) .. controls (252,33.34) and (253.34,32) .. (255,32) .. controls (256.66,32) and (258,33.34) .. (258,35) .. controls (258,36.66) and (256.66,38) .. (255,38) .. controls (253.34,38) and (252,36.66) .. (252,35) -- cycle ;
\draw    (255,35) .. controls (264.88,39.47) and (270,34.72) .. (270,45) ;
\draw   (290,45) -- (310,45) -- (310,60) -- (290,60) -- cycle ;
\draw   (290,77) .. controls (290,78.66) and (291.34,80) .. (293,80) -- (307,80) .. controls (308.66,80) and (310,78.66) .. (310,77) -- (310,65) .. controls (310,65) and (310,65) .. (310,65) -- (290,65) .. controls (290,65) and (290,65) .. (290,65) -- cycle ;
\draw    (300,65) -- (300,60) ;
\draw   (260,45) -- (280,45) -- (280,60) -- (260,60) -- cycle ;
\draw    (270,90) -- (270,80) ;
\draw    (270,90) .. controls (260.17,94.43) and (255,89.47) .. (255,100) ;
\draw    (270,90) .. controls (279.88,94.47) and (285,89.72) .. (285,100) ;
\draw   (275,112) .. controls (275,113.66) and (276.34,115) .. (278,115) -- (292,115) .. controls (293.66,115) and (295,113.66) .. (295,112) -- (295,100) .. controls (295,100) and (295,100) .. (295,100) -- (275,100) .. controls (275,100) and (275,100) .. (275,100) -- cycle ;
\draw    (270,65) -- (270,60) ;
\draw  [fill={rgb, 255:red, 0; green, 0; blue, 0 }  ,fill opacity=1 ] (267,90) .. controls (267,88.34) and (268.34,87) .. (270,87) .. controls (271.66,87) and (273,88.34) .. (273,90) .. controls (273,91.66) and (271.66,93) .. (270,93) .. controls (268.34,93) and (267,91.66) .. (267,90) -- cycle ;
\draw   (260,65) -- (280,65) -- (280,80) -- (260,80) -- cycle ;
\draw    (270,25) .. controls (260.17,29.43) and (255,24.47) .. (255,35) ;
\draw  [fill={rgb, 255:red, 0; green, 0; blue, 0 }  ,fill opacity=1 ] (267,25) .. controls (267,23.34) and (268.34,22) .. (270,22) .. controls (271.66,22) and (273,23.34) .. (273,25) .. controls (273,26.66) and (271.66,28) .. (270,28) .. controls (268.34,28) and (267,26.66) .. (267,25) -- cycle ;
\draw    (270,25) .. controls (279.88,29.47) and (285,24.72) .. (285,35) ;
\draw    (285,35) .. controls (285.25,44.72) and (299.75,34.72) .. (300,45) ;
\draw   (230,100) -- (270,100) -- (270,115) -- (230,115) -- cycle ;
\draw    (240,100) -- (240,45) ;
\draw    (250,120) -- (250,115) ;
\draw   (240,120) -- (260,120) -- (260,135) -- (240,135) -- cycle ;
\draw    (250,140) -- (250,135) ;
\draw    (380,40) -- (380,30) ;
\draw    (410,50) .. controls (400.17,54.43) and (395,49.47) .. (395,60) ;
\draw  [fill={rgb, 255:red, 0; green, 0; blue, 0 }  ,fill opacity=1 ] (407,50) .. controls (407,48.34) and (408.34,47) .. (410,47) .. controls (411.66,47) and (413,48.34) .. (413,50) .. controls (413,51.66) and (411.66,53) .. (410,53) .. controls (408.34,53) and (407,51.66) .. (407,50) -- cycle ;
\draw    (410,50) .. controls (419.88,54.47) and (425,49.72) .. (425,60) ;
\draw   (415,60) -- (435,60) -- (435,75) -- (415,75) -- cycle ;
\draw   (415,92) .. controls (415,93.66) and (416.34,95) .. (418,95) -- (432,95) .. controls (433.66,95) and (435,93.66) .. (435,92) -- (435,80) .. controls (435,80) and (435,80) .. (435,80) -- (415,80) .. controls (415,80) and (415,80) .. (415,80) -- cycle ;
\draw    (425,80) -- (425,75) ;
\draw   (385,60) -- (405,60) -- (405,75) -- (385,75) -- cycle ;
\draw    (395,100) -- (395,95) ;
\draw   (385,112) .. controls (385,113.66) and (386.34,115) .. (388,115) -- (402,115) .. controls (403.66,115) and (405,113.66) .. (405,112) -- (405,100) .. controls (405,100) and (405,100) .. (405,100) -- (385,100) .. controls (385,100) and (385,100) .. (385,100) -- cycle ;
\draw    (395,80) -- (395,75) ;
\draw   (385,80) -- (405,80) -- (405,95) -- (385,95) -- cycle ;
\draw    (380,40) .. controls (370.17,44.43) and (350,39.47) .. (350,50) ;
\draw  [fill={rgb, 255:red, 0; green, 0; blue, 0 }  ,fill opacity=1 ] (377,40) .. controls (377,38.34) and (378.34,37) .. (380,37) .. controls (381.66,37) and (383,38.34) .. (383,40) .. controls (383,41.66) and (381.66,43) .. (380,43) .. controls (378.34,43) and (377,41.66) .. (377,40) -- cycle ;
\draw    (380,40) .. controls (389.88,44.47) and (410,39.72) .. (410,50) ;
\draw   (340,80) -- (380,80) -- (380,95) -- (340,95) -- cycle ;
\draw    (350,80) -- (350,50) ;
\draw   (360,60) -- (380,60) -- (380,75) -- (360,75) -- cycle ;
\draw    (370,80) -- (370,75) ;
\draw  [draw opacity=0] (205,75) -- (235,75) -- (235,90) -- (205,90) -- cycle ;
\draw  [draw opacity=0] (310,70) -- (340,70) -- (340,85) -- (310,85) -- cycle ;
\draw    (360,100) -- (360,95) ;
\draw   (350,100) -- (370,100) -- (370,115) -- (350,115) -- cycle ;
\draw    (360,120) -- (360,115) ;

\draw (195,52.5) node  [font=\footnotesize]  {$\pur{g}_{1}$};
\draw (195,72.5) node  [font=\footnotesize]  {$\obsv{x}_{1}$};
\draw (165,52.5) node  [font=\footnotesize]  {$\pur{f}_{1}$};
\draw (180,92.5) node  [font=\footnotesize]  {$\pur{g}_{2}$};
\draw (180,112.5) node  [font=\footnotesize]  {$\obsv{x}_{2}$};
\draw (150,92.5) node  [font=\footnotesize]  {$\pur{f}_{2}$};
\draw (300,52.5) node  [font=\footnotesize]  {$\pur{g}_{1}$};
\draw (300,72.5) node  [font=\footnotesize]  {$\obsv{x}_{1}$};
\draw (270,52.5) node  [font=\footnotesize]  {$\pur{f}_{1}$};
\draw (285,107.5) node  [font=\footnotesize]  {$\obsv{x}_{2}$};
\draw (270,72.5) node  [font=\footnotesize]  {$\pur{g}_{2}$};
\draw (250,107.5) node  [font=\footnotesize]  {$g_{2}^{\dagger}\pur{(f_{1})}$};
\draw (250,127.5) node  [font=\footnotesize]  {$\pur{f}_{2}$};
\draw (425,67.5) node  [font=\footnotesize]  {$\pur{g}_{1}$};
\draw (425,87.5) node  [font=\footnotesize]  {$\obsv{x}_{1}$};
\draw (395,67.5) node  [font=\footnotesize]  {$\pur{f}_{1}$};
\draw (395,107.5) node  [font=\footnotesize]  {$\obsv{x}_{2}$};
\draw (395,87.5) node  [font=\footnotesize]  {$\pur{g}_{2}$};
\draw (360,87.5) node  [font=\footnotesize]  {$g_{2}^{\dagger }\pur{(f_{1})}$};
\draw (370,68.5) node  [font=\footnotesize]  {$\pur{x}_{2}$};
\draw (220,77.5) node  [font=\footnotesize]  {$=$};
\draw (325,77.5) node  [font=\footnotesize]  {$=$};
\draw (360,107.5) node  [font=\footnotesize]  {$\pur{f}_{2}$};

\end{tikzpicture}

     \caption{Composition of processes with \protect\exactObservations{} in normal form.}
    \label{fig:compositionNormalForm}
  \end{figure}

  Finally, let us show that this rewriting computes a \normalisation{},
  $\normal{(\pur{g} ⨾ \obsv{x}) ⊲ \pur{f}} = \pur{f}$; that is, that $f$ is,
  \almostSurely{}, the \normalisation{} of the \kl{process with exact
  observations} $(\pur{g} ⨾ \obsv{x}) ⊲ \pur{f}$. We only apply \kl{totality} of $f$ to obtain
  $
  (\pur{g} ⨾ \obsv{x}) ⊲ \pur{f} =
  (\pur{g} ⨾ \obsv{x}) ⊲ (\pur{f} ⨾ ε) ⊲ \pur{f}. 
  $
\end{proof}

\begin{thm}%
\label{prop:conditionals-of-programs}%
  The category of \exactObservations{} over a strict \MarkovCategory{}, \(\exact(ℂ)\), is a \partialMarkovCategory{}. Its \conditionals{} can be obtained from the \conditionals{} of the base \MarkovCategory{}.
\end{thm}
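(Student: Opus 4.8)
The plan is to verify the two clauses of the statement separately: that $\exact(ℂ)$ is a \copyDiscardCategory{} and that it has \conditionals{} obtained from those of $ℂ$. The first clause is immediate from \Cref{def:exact-conditioning-cat}, since $\exact(ℂ)$ is constructed as a \copyDiscardCategory{} whose copy and discard are those of $ℂ$ transported along the identity-on-objects strict monoidal, copy-discard-preserving functor $\pur{(-)} ፡ ℂ → \exact(ℂ)$. The substance of the theorem is therefore the second clause, namely the existence and base-category origin of \conditionals{}.

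For conditionals, I would start from an arbitrary morphism $h ፡ X → A ⊗ B$ of $\exact(ℂ)$ and apply the normal form of \Cref{prop:normalisation-of-programs}, writing $h = (\pur{g} ⨾ \obsv{z}) ⊲ \pur{f}$ with $f ፡ X → A ⊗ B$ \total{} in $ℂ$, $g$ \total{}, and $z$ \total{} and \deterministic{}. The decisive output of that theorem is that $\pur{f}$ is, \almostSurely{}, the \normalisation{} of $h$, i.e. $\normal{h} = \pur{f}$, realising the defining equation $h = (h ⨾ ε) ⊲ \normal{h}$ with $(h ⨾ ε) = \pur{g} ⨾ \obsv{z}$.

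Next I would compute a \conditional{} of the \normalisation{} $\pur{f}$ directly in the base category. Since $f$ is \total{}, it is a genuine morphism of the \MarkovCategory{} $ℂ$, so it admits a \conditional{} $\cond(f) ፡ X ⊗ A → B$ with $f = (f ⨾ π₁) ⊲ \cond(f)$ in $ℂ$. As $\pur{(-)}$ preserves copy, discard, identities, tensor, and composition, it preserves the projections $π₁, π₂$ and the \conditionalComposition{} $(⊲)$, which is assembled entirely from this data. Applying $\pur{(-)}$ to the conditioning equation thus yields $\pur{f} = (\pur{f} ⨾ π₁) ⊲ \pur{\cond(f)}$, exhibiting $\pur{\cond(f)}$ as a \conditional{} of $\pur{f} = \normal{h}$ in $\exact(ℂ)$, built from a \conditional{} of the base \MarkovCategory{}.

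Finally I would invoke \Cref{prop:conditionals-of-normalisation}, which says that \conditionals{} of a \normalisation{} are \conditionals{} of the original morphism; this upgrades $\pur{\cond(f)}$ from a \conditional{} of $\normal{h}$ to a \conditional{} of $h$ itself, completing both halves of the claim. The main obstacle is not any single computation but the careful bookkeeping: confirming that the normal-form factorisation genuinely identifies $\pur{f}$ with $\normal{h}$, and that functoriality of $\pur{(-)}$ transports the conditioning equation verbatim. Because \conditionals{} are defined only up to $(f ⨾ π₁)$-\almostSureEquality{}, I would claim existence rather than canonicity, relying on \Cref{prop:conditionals-of-normalisation} to absorb the discrepancy between conditioning $h$ and conditioning its \normalisation{}.
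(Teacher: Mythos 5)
Your proposal is correct and follows the paper's own route exactly: normal form via \Cref{prop:normalisation-of-programs}, a conditional of the normalisation $\pur{f}$ imported from $ℂ$ along $\pur{(-)}$, and \Cref{prop:conditionals-of-normalisation} to transfer it back to the original process. You merely spell out the bookkeeping (that $(h ⨾ ε) = \pur{g} ⨾ \obsv{z}$ by totality of $f$, and that $\pur{(-)}$ preserves conditional composition) that the paper's two-sentence proof leaves implicit.
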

\begin{proof}   
  By \Cref{prop:normalisation-of-programs}, we can compute a \normalisation{}
  \(\normal{h}\) of a process with \exactObservations{} \(h\), only assuming
  \conditionals{} in \(ℂ\). By \Cref{prop:conditionals-of-normalisation}, a
  \kl{conditional} of \(\normal{h}\) is a \kl{conditional} of \(h\), and this
  \kl{conditional} exists because $ℂ$ is a \kl{Markov category}.
\end{proof}

\PartialMarkovCategories{} with exact observations are a minimal structure where Bayes' theorem holds.
This result suggests that Bayes' theorem may intrinsically be about exact observations.

\begin{thm}[Synthetic Bayes' Theorem]%
  \label{th:bayes-exact-observations}
  For a \MarkovCategory{} \(ℂ\), consider the \partialMarkovCategory{} \(\exact(ℂ)\).
  An exact observation, \(\obsv{y} \colon Y \to I\), from a prior distribution, \(p \colon I \to X\), through a channel, \(f \colon X \to Y\), determines an update that is proportional to evaluating the \BayesianInversion{} of \(f\) on the observation, \(\bayesinv{f}{p}(y)\).
\end{thm}
\begin{proof}
  The equalities follow from: (\emph{i}) the definition of Bayesian inversion
  (\Cref{def:bayes-inversion}), (\emph{ii}) the axiom of exact observations (\Cref{def:exact-conditioning-cat}).
  \begin{figure}[h!]

\tikzset{every picture/.style={line width=0.75pt}} %

\begin{tikzpicture}[x=0.75pt,y=0.75pt,yscale=-1,xscale=1]
\draw    (80,50) -- (80,40) ;
\draw    (80,50) .. controls (70.17,54.43) and (65,49.47) .. (65,60) ;
\draw  [fill={rgb, 255:red, 0; green, 0; blue, 0 }  ,fill opacity=1 ] (77,50) .. controls (77,48.34) and (78.34,47) .. (80,47) .. controls (81.66,47) and (83,48.34) .. (83,50) .. controls (83,51.66) and (81.66,53) .. (80,53) .. controls (78.34,53) and (77,51.66) .. (77,50) -- cycle ;
\draw    (80,50) .. controls (89.88,54.47) and (95,49.72) .. (95,60) ;
\draw    (65,100) -- (65,60) ;
\draw    (95,80) -- (95,75) ;
\draw  [draw opacity=0] (105,45) -- (135,45) -- (135,60) -- (105,60) -- cycle ;
\draw    (170,65) -- (170,55) ;
\draw    (170,65) .. controls (156.89,69.43) and (150,64.47) .. (150,75) ;
\draw  [fill={rgb, 255:red, 0; green, 0; blue, 0 }  ,fill opacity=1 ] (167,65) .. controls (167,63.34) and (168.34,62) .. (170,62) .. controls (171.66,62) and (173,63.34) .. (173,65) .. controls (173,66.66) and (171.66,68) .. (170,68) .. controls (168.34,68) and (167,66.66) .. (167,65) -- cycle ;
\draw    (170,65) .. controls (183.17,69.47) and (190,64.72) .. (190,75) ;
\draw    (150,100) -- (150,90) ;
\draw    (170,40.5) -- (170,35.5) ;
\draw   (70,25) -- (90,25) -- (90,40) -- (70,40) -- cycle ;
\draw   (85,60) -- (105,60) -- (105,75) -- (85,75) -- cycle ;
\draw   (130,75) -- (170,75) -- (170,90) -- (130,90) -- cycle ;
\draw   (160,20.5) -- (180,20.5) -- (180,35.5) -- (160,35.5) -- cycle ;
\draw   (160,40.5) -- (180,40.5) -- (180,55.5) -- (160,55.5) -- cycle ;
\draw   (85,92) .. controls (85,93.66) and (86.34,95) .. (88,95) -- (102,95) .. controls (103.66,95) and (105,93.66) .. (105,92) -- (105,80) .. controls (105,80) and (105,80) .. (105,80) -- (85,80) .. controls (85,80) and (85,80) .. (85,80) -- cycle ;
\draw   (180,87) .. controls (180,88.66) and (181.34,90) .. (183,90) -- (197,90) .. controls (198.66,90) and (200,88.66) .. (200,87) -- (200,75) .. controls (200,75) and (200,75) .. (200,75) -- (180,75) .. controls (180,75) and (180,75) .. (180,75) -- cycle ;
\draw  [draw opacity=0] (200,45) -- (230,45) -- (230,60) -- (200,60) -- cycle ;
\draw    (285,65) -- (285,60) ;
\draw    (245,100) -- (245,90) ;
\draw    (285,45) -- (285,40) ;
\draw   (225,75) -- (265,75) -- (265,90) -- (225,90) -- cycle ;
\draw   (275,25) -- (295,25) -- (295,40) -- (275,40) -- cycle ;
\draw   (275,45) -- (295,45) -- (295,60) -- (275,60) -- cycle ;
\draw   (275,77) .. controls (275,78.66) and (276.34,80) .. (278,80) -- (292,80) .. controls (293.66,80) and (295,78.66) .. (295,77) -- (295,65) .. controls (295,65) and (295,65) .. (295,65) -- (275,65) .. controls (275,65) and (275,65) .. (275,65) -- cycle ;
\draw   (235,55) -- (255,55) -- (255,70) -- (235,70) -- cycle ;
\draw    (245,75) -- (245,70) ;

\draw (120,52.5) node  [font=\footnotesize]  {$\overset{\emph{(i)}}{=}$};
\draw (80,32.5) node  [font=\footnotesize]  {$p$};
\draw (95,67.5) node  [font=\footnotesize]  {$f$};
\draw (150,82.5) node  [font=\footnotesize]  {$f^{\dagger }( p)$};
\draw (170,28) node  [font=\footnotesize]  {$p$};
\draw (170,48) node  [font=\footnotesize]  {$f$};
\draw (215,52.5) node  [font=\footnotesize]  {$\overset{\emph{(ii)}}{=}$};
\draw (245,82.5) node  [font=\footnotesize]  {$f^{\dagger }( p)$};
\draw (285,32.5) node  [font=\footnotesize]  {$p$};
\draw (285,52.5) node  [font=\footnotesize]  {$f$};
\draw (245,62.5) node  [font=\footnotesize]  {$y$};
\draw (95,87.5) node  [font=\footnotesize]  {$y^{\circ }$};
\draw (190,82.5) node  [font=\footnotesize]  {$y^{\circ }$};
\draw (285,72.5) node  [font=\footnotesize]  {$y^{\circ }$};

\end{tikzpicture}
     \caption{Synthetic \protect\BayesTheorem{}.}%
    \label{fig:bayesTheorem-exact}
  \end{figure}
\end{proof}

Adding \exactObservations{} to a Markov category does not add new equalities to it.

\begin{thm}
  Any strict \kl{Markov category} embeds via a faithful and strict monoidal functor
  into its \kl{partial Markov category} of \kl{processes with exact
  observations}, $\pur{(-)} ፡ ℂ → \exact(ℂ)$.
\end{thm}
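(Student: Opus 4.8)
The plan is to treat strict monoidality as essentially definitional and to concentrate all effort on faithfulness. By construction $\pur{(-)}$ is identity on objects, and the generating relations of \Cref{def:exact-conditioning-cat}---$\pur{f} ⊗ \pur{g} = \pur{(f ⊗ g)}$, $\pur{f} ⨾ \pur{g} = \pur{(f ⨾ g)}$, $\pur{\id{}} = \id{}$, $\pur{ν} = ν$, $\pur{ε} = ε$---say exactly that it is a strict monoidal functor preserving the \copyDiscardCategory{} structure, so no separate argument is needed there. The entire content of the statement is therefore that $\pur{(-)}$ is \emph{faithful}: that $\pur{f_1} = \pur{f_2}$ in $\exact(ℂ)$ forces $f_1 = f_2$ in $ℂ$.

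I would reduce faithfulness to the construction of a faithful model. It suffices to produce a \copyDiscardCategory{} $𝒟$, a faithful strict monoidal copy-discard functor $J ፡ ℂ → 𝒟$ preserving \deterministic{} morphisms, and an interpretation in $𝒟$ of each observation, so that the universal property of $\exact(ℂ)$ yields a functor $R ፡ \exact(ℂ) → 𝒟$ with $R ∘ \pur{(-)} = J$; faithfulness of $J$ then forces faithfulness of $\pur{(-)}$. A preliminary observation constrains the target: in a \MarkovCategory{} the unit is terminal, so $ℂ(X, I) = \{ε_X\}$, and any interpretation valued in $ℂ$ itself would have to send $\obsv{x} ↦ ε_X$, which breaks the observation axiom $\obsv{x} ⊲ \id{} = \obsv{x} ⨾ \pur{x}$ (the right-hand side would become $ε_X ⨾ x ≠ \id{X}$). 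Hence $𝒟$ must be genuinely partial. I would take $𝒟$ to be a \discretePartialMarkovCategory{} and interpret each observation as ``compare the input with the constant $x$, then discard'', $\widetilde{\obsv{x}} = (x ⊗ \id{X}) ⨾ \comp_X ⨾ ε_X$. The observation axiom then holds in $𝒟$ by the special and Frobenius axioms of the \comparator{} (\Cref{diagram-partial-frobenius}): comparing the input against $x$ and retaining it returns $x$ on the successful branch, which is precisely $\widetilde{\obsv{x}} ⨾ J(x)$.

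The main obstacle is supplying such a faithful $J$ into a \discretePartialMarkovCategory{} for an \emph{arbitrary} strict \MarkovCategory{} with \conditionals{}. For the motivating examples this is immediate ($\Stoch ↪ \SubStoch$, $\BorelStoch ↪ \subBorelStoch$), and whenever $ℂ$ is a \distributiveMarkovCategory{} the Kleisli category of the maybe monad provides $𝒟$ together with a faithful inclusion $J$; the difficulty is the general case, where one must embed $ℂ$ conservatively into a partial setting. \Cref{prop:normalisation-of-programs} offers a complementary, purely syntactic route: since each $f_i$ is \total{}, the image $\pur{f_i}$ is \total{} and is its own \normalisation{}, and \almostSureEquality{} with respect to a full \domainOfDefinition{} (that is, $ε$) coincides with plain equality because $ε ⊲ g = g$; thus the normal form pins down the pure content of $\pur{f_i}$ unambiguously. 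Turning that identification into an equation back in $ℂ$ nevertheless amounts to evaluating in a faithful model, so both routes converge on the same crux---establishing conservativity of the presentation---which is where I expect the genuine work to lie.
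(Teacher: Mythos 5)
Your reduction of the theorem to faithfulness is right, and your diagnosis that a retraction into $ℂ$ itself cannot exist (since $ℂ(X,I)=\{ε_X\}$ would trivialise $\obsv{x}$) is a genuinely useful observation. But the proposal has an acknowledged hole exactly where the theorem lives: you never produce the faithful copy-discard functor $J ፡ ℂ → 𝒟$ into a \discretePartialMarkovCategory{} for an \emph{arbitrary} strict \MarkovCategory{} $ℂ$, and without it the universal-property argument gives nothing. The cases you call immediate do not actually cover even the distributive situation: the Kleisli category of the maybe monad over a \distributiveMarkovCategory{} is shown in the paper to be a \partialMarkovCategory{}, but no \comparators{} are constructed there, and interpreting $\obsv{x}$ as ``compare with $x$, then discard'' (or directly as the characteristic partial map of the point $x$) requires the subobject $\{x\} \into X$ to be complemented/decidable, which a general \MarkovCategory{} does not supply. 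Your fallback via \Cref{prop:normalisation-of-programs} is, as you concede, circular: the normal form pins down $\pur{f}$ only up to equality \emph{in} $\exact(ℂ)$, which is the relation being analysed.

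The paper closes the gap syntactically rather than semantically, and this is worth internalising because it sidesteps the model-construction problem entirely. One checks that the presence of \emph{non-unital} \kl{exact observations} (those $\obsv{x}$ with $X \not\cong I$) is an invariant of every defining axiom of $\exact(ℂ)$ — the base-category axioms never touch observations, the observation axiom has the same $\obsv{x}$ on both sides, and the tensor axiom respects the split because $X ⊗ Y ≅ I$ forces $X ≅ I$ and $Y ≅ I$ in a \MarkovCategory{}. Hence any chain of equations witnessing $\pur{f}_1 = \pur{f}_2$ lives in the subcategory $\exact(ℂ)_{\mathsf{unital}}$ generated by unital observations only, and on that subcategory there \emph{is} a retraction $E ፡ \exact(ℂ)_{\mathsf{unital}} → ℂ$, namely $E(\pur{f}) = f$ and $E(\obsv{u}) = ε$; the observation axiom is verified for unital $u$ using the isomorphism $ψ ፡ U ≅ I$. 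Then $f_1 = E(\pur{f}_1) = E(\pur{f}_2) = f_2$. In effect the paper builds the faithful model you were looking for, but only over the fragment of the presentation that can actually occur in a proof of an equation between pure morphisms — that restriction is the missing idea in your proposal.
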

\begin{proof}
  The functor is strict monoidal by the definition of $\exact(ℂ)$. Let us show
  it is a faithful embedding. Let us start by distinguishing two types of
  \kl{exact observations}: \emph{(i)} unital \kl{exact observations}, $\obsv{u}
  ∈ \exact(ℂ)(U;I)$, such that there exists an isomorphism $ψ ፡ U ≅ I$; and
  \emph{(ii)} non-unital \kl{exact observations}, $\obsv{x} ∈ \exact(ℂ)(X;I)$
  for any object not isomorphic to the monoidal unit, $X \not\cong I$.

  We now prove that the existence of non-unital \kl{exact observations} remains
  constant under the axioms of the category of \kl{processes with exact
  observations} (\Cref{def:exact-conditioning-cat}). The existence of non-unital
  \kl{exact observations} remains constant under the axioms involving the base
  category, because these do not equate \kl{exact observations}; it also remains
  constant under the unital axiom $\obsv{\id{I}} = \id{I}$, because it only
  involves unital \kl{exact observations}; and it remains constant under the
  observation axiom, $\obsv{x} ⊲ \id{} = \obsv{x} ⨾ \pur{x}$, because the observation
  is the same on both sides of the equation. The only axiom that requires more care
  is the tensor axiom $\obsv{(x ⊗ y)} = \obsv{x} ⊗ \obsv{y}$: this follows from the
  fact that, in a \kl{Markov category}, $X ⊗ Y ≅ I$ implies $X ≅ I$ and $Y ≅ I$.
    
  Now, assume we prove that $\pur{f}_1 = \pur{f}_2$ from the presentation of
  $\exact(ℂ)$ in \Cref{def:exact-conditioning-cat}: there must exist a chain of
  equations $\pur{f}_1 = h_1 = ... = h_k = \pur{f}_2$ that apply the axioms of
  \kl{exact observations}. Because the existence of non-unital observations
  remains constant, we know that no morphism on that chain of equations contains
  non-unital \kl{exact observations}. 
  
  Consider the category $\exact(ℂ)_{\mathsf{unital}}$ freely generated in the
  same way as $\exact(ℂ)$ but only with unital \kl{exact observations}: there
  must exist a chain of equations $\pur{f}_1 = h_1 = ... = h_k = \pur{f}_2$ that
  apply the axioms of this new presentation. However, we can now define a strict
  monoidal functor that erases unital observations, $E ፡
  \exact(ℂ)_{\mathsf{unital}} → ℂ$, defined by $E(\pur{f}) = f$ and
  $E(\obsv{u}) = ε$. This is well-defined under the observation axiom because
  $\obsv{u} ⊲ \id{} = \obsv{u} ⊲ (ε ⨾ \pur{ψ}) =
  \obsv{u} ⨾ \pur{ψ} = \obsv{u} ⨾ \pur{u} ⨾ ε ⨾ \pur{ψ} = \obsv{u} ⨾ \pur{u}$.

  Finally, $f_1 = E(\pur{f}_1) = E(\pur{f}_2) = f_2$, concluding the proof.
\end{proof}

\subsection{Example — Inferring the mean of a normal distribution}
\label{ex:inferring-mean}

Assume a normal distribution, $\mathrm{Norm}(m,1)$, with standard deviation of
$1$ and with its mean sampled uniformly from the interval $m \sim
\mathrm{unif}(0,1)$. We sample from this normal distribution, $v \sim
\mathrm{Norm}(m,1)$, and observe a value of, say, $v = 2.1$. What is updated posterior
distribution on $m$?

The reasoning from the operational description of the problem to a computational
description is in \Cref{fig:chainExactObservation}. It rewrites a diagram containing \exactObservations{} into a
diagram in the normal form of \Cref{prop:normalisation-of-programs}.

We now use the explicit density functions for the uniform and normal distributions to
construct two morphisms, $\mathrm{unif} ፡ I → ℝ$ and $\mathrm{norm} ፡ ℝ → ℝ$, of
the category of standard Borel spaces, $\BorelStoch$.
$$
\pdf{\unif}(x) = δ_{[0,1]}(x); \qquad
\pdf{\norm}(x,m) = \frac{1}{\sqrt{2π}}\exp\left(-\frac{1}{2}(x-m)^2\right).
$$
Because these are constructed from density functions, the \BayesianInversion{}
of the normal distribution has a known density function
(\Cref{exa:inversionsDensity}).
$$
\pdf{\bayesinv{\norm}{\unif}}(m | x) =
  \frac{\pdf{\unif}(x) · \pdf{\norm}(x,m)}%
  {\int_{x₀ ∈ X} \pdf{\unif}(x₀) · \pdf{\norm}(x₀,m)\,dx₀}.
$$
Now, string diagrammatic reasoning in the category of processes with
\exactObservations{} does evaluate the observation on the \BayesianInversion{} (\Cref{fig:chainExactObservation}).

\begin{figure}[!h]%

\tikzset{every picture/.style={line width=0.75pt}} %

\begin{tikzpicture}[x=0.75pt,y=0.75pt,yscale=-1,xscale=1]
\draw    (80,50) -- (80,40) ;
\draw    (80,50) .. controls (70.17,54.43) and (65,49.47) .. (65,60) ;
\draw  [fill={rgb, 255:red, 0; green, 0; blue, 0 }  ,fill opacity=1 ] (77,50) .. controls (77,48.34) and (78.34,47) .. (80,47) .. controls (81.66,47) and (83,48.34) .. (83,50) .. controls (83,51.66) and (81.66,53) .. (80,53) .. controls (78.34,53) and (77,51.66) .. (77,50) -- cycle ;
\draw    (80,50) .. controls (89.88,54.47) and (95,49.72) .. (95,60) ;
\draw    (65,100) -- (65,60) ;
\draw    (95,80) -- (95,75) ;
\draw  [draw opacity=0] (105,45) -- (135,45) -- (135,60) -- (105,60) -- cycle ;
\draw    (170,65) -- (170,55) ;
\draw    (170,65) .. controls (156.89,69.43) and (150,64.47) .. (150,75) ;
\draw  [fill={rgb, 255:red, 0; green, 0; blue, 0 }  ,fill opacity=1 ] (167,65) .. controls (167,63.34) and (168.34,62) .. (170,62) .. controls (171.66,62) and (173,63.34) .. (173,65) .. controls (173,66.66) and (171.66,68) .. (170,68) .. controls (168.34,68) and (167,66.66) .. (167,65) -- cycle ;
\draw    (170,65) .. controls (183.17,69.47) and (190,64.72) .. (190,75) ;
\draw    (150,100) -- (150,90) ;
\draw    (170,40.5) -- (170,35.5) ;
\draw   (70,25) -- (90,25) -- (90,40) -- (70,40) -- cycle ;
\draw   (85,60) -- (105,60) -- (105,75) -- (85,75) -- cycle ;
\draw   (130,75) -- (170,75) -- (170,90) -- (130,90) -- cycle ;
\draw   (160,20.5) -- (180,20.5) -- (180,35.5) -- (160,35.5) -- cycle ;
\draw   (160,40.5) -- (180,40.5) -- (180,55.5) -- (160,55.5) -- cycle ;
\draw   (85,92) .. controls (85,93.66) and (86.34,95) .. (88,95) -- (102,95) .. controls (103.66,95) and (105,93.66) .. (105,92) -- (105,80) .. controls (105,80) and (105,80) .. (105,80) -- (85,80) .. controls (85,80) and (85,80) .. (85,80) -- cycle ;
\draw   (180,87) .. controls (180,88.66) and (181.34,90) .. (183,90) -- (197,90) .. controls (198.66,90) and (200,88.66) .. (200,87) -- (200,75) .. controls (200,75) and (200,75) .. (200,75) -- (180,75) .. controls (180,75) and (180,75) .. (180,75) -- cycle ;
\draw  [draw opacity=0] (200,45) -- (230,45) -- (230,60) -- (200,60) -- cycle ;
\draw    (285,65) -- (285,60) ;
\draw    (245,100) -- (245,90) ;
\draw    (285,45) -- (285,40) ;
\draw   (225,75) -- (265,75) -- (265,90) -- (225,90) -- cycle ;
\draw   (275,25) -- (295,25) -- (295,40) -- (275,40) -- cycle ;
\draw   (275,45) -- (295,45) -- (295,60) -- (275,60) -- cycle ;
\draw   (275,77) .. controls (275,78.66) and (276.34,80) .. (278,80) -- (292,80) .. controls (293.66,80) and (295,78.66) .. (295,77) -- (295,65) .. controls (295,65) and (295,65) .. (295,65) -- (275,65) .. controls (275,65) and (275,65) .. (275,65) -- cycle ;
\draw   (235,55) -- (255,55) -- (255,70) -- (235,70) -- cycle ;
\draw    (245,75) -- (245,70) ;

\draw (120,52.5) node  [font=\footnotesize]  {$=$};
\draw (80,32.5) node  [font=\footnotesize]  {$\pur{u}$};
\draw (95,67.5) node  [font=\footnotesize]  {$\pur{n}$};
\draw (150,82.5) node  [font=\footnotesize]  {$n^{\dagger}\pur{(u)}$};
\draw (170,27.5) node  [font=\footnotesize]  {$\pur{u}$};
\draw (170,47.5) node  [font=\footnotesize]  {$\pur{n}$};
\draw (95,87.5) node  [font=\footnotesize]  {$\obsv{v}$};
\draw (190,82.5) node  [font=\footnotesize]  {$\obsv{v}$};
\draw (215,52.5) node  [font=\footnotesize]  {$=$};
\draw (245,82.5) node  [font=\footnotesize]  {$n^{\dagger }\pur{(u)}$};
\draw (285,32.5) node  [font=\footnotesize]  {$\pur{u}$};
\draw (285,52.5) node  [font=\footnotesize]  {$\pur{n}$};
\draw (285,72.5) node  [font=\footnotesize]  {$\obsv{v}$};
\draw (245,62.5) node  [font=\footnotesize]  {$\pur{v}$};

\end{tikzpicture}
   \caption{\protect\BayesianInversion{} via \protect\exactObservations{}.}
  \label{fig:chainExactObservation}
\end{figure}

Let us plot the density function of the posterior distribution,
$\pdf{\bayesinv{\norm}{\unif}}(m | x)$, for multiple values of $v$. We do this
numerically, evaluating the integral expression we obtained from the string
diagrams.
\begin{figure}[h!]
  \centering
  \subfigure[]{\includegraphics[width=0.2\textwidth]{posteriorm11.png}} 
  \subfigure[]{\includegraphics[width=0.2\textwidth]{posterior021.png}} 
  \subfigure[]{\includegraphics[width=0.2\textwidth]{posterior078.png}}
  \subfigure[]{\includegraphics[width=0.2\textwidth]{posterior24.png}}
  \caption{Graphs for the posterior density functions after observing multiple values (a) $v = -1.1$, (b) $v = 0.21$, (c) $v = 0.78$, and (d) $v = 2.4$.}
  \label{fig:posteriorcontinuous}
\end{figure}

In this way, \partialMarkovCategories{} of \exactObservations{} may seen as
providing further algebraic justification for the compositional approach to
exact conditioning \cite[Section IV]{stein2021compositional}. %
Finally, note that computing \exactObservations{} is also not restricted to
Gaussian probability theory (\Cref{prop:normalisation-of-programs}); we can
handle computations with integrals symbolically, even if we still need to compute
them numerically.

\section{Conclusions and future work}

We have introduced \partialMarkovCategories{}: a common generalisation of \MarkovCategories{} with \conditionals{} and cartesian restriction categories.
\Conditionals{} enable reasoning both about probabilistic problems and about properties of \normalisation{} and \bayesianInversion{}, which appear as derived operations.
We constructed a class of \partialMarkovCategories{}: the Kleisli categories for the Maybe monad on a distributive Markov category (\Cref{thm:partialMarkovMaybeMonad}). Still, we observed that not all \partialMarkovCategories{} are of this form (\Cref{exa:gaussians}).

Some \partialMarkovCategories{} additionally support \exactObservations{}: \discretePartialMarkovCategories{} do so via equality constraints, extending discrete cartesian restriction categories; the construction of \exactObservations{} over a \MarkovCategory{} adds them syntactically.
In both cases, a synthetic Bayes' theorem holds (\Cref{th:bayes,th:bayes-exact-observations}).

\DiscretePartialMarkovCategories{} express the difference between Pearl's and Jeffrey's update procedures (\Cref{sec:pearl-jeffrey}). While updating with Pearl's procedure coincides with applying Bayes' theorem, Jeffrey's update remains more mysterious. This discrepancy and the more recent version of Jeffrey's update~\cite{jacobstein} warrant further work.

Distributive Markov categories were recently introduced for semantics of probabilistic programs. Distributive partial Markov categories enable a semantics for imperative probabilistic programs that additionally support Bayesian updates.
 
\section*{Acknowledgements}

We are thankful for multiple helpful discussions
and feedback from
Clémence Chanavat,
Bart Jacobs,
Paolo Perrone, %
David Spivak, %
Sam Staton, %
Dario Stein, %
Priyaa Srinivasan, and %
Márk Széles. %

\newpage
\bibliographystyle{alphaurl}
\bibliography{bibliography.bib}

\end{document}